\def\NAT@def@citea{\def\@citea{\NAT@separator}}
\theoremstyle{plain}
\newtheorem{theorem}{Theorem}[section]
\newtheorem{lemma}[theorem]{Lemma}
\newtheorem{proposition}[theorem]{Proposition}
\newtheorem{assumption}[theorem]{Assumption}
\theoremstyle{definition}
\newtheorem{definition}[theorem]{Definition}
\newtheorem{example}[theorem]{Example}
\theoremstyle{remark}
\newtheorem{remark}{Remark}
\begin{document}

\articletype{ARTICLE TEMPLATE}

\title{Quasi-Newton Method for Set Optimization Problems with Set-Valued Mapping Given by Finitely Many Vector-Valued Functions}

\author{
\name{Debdas Ghosh\textsuperscript{a}\thanks{Corresponding Author: D. Ghosh (debdas.mat@iitbhu.ac.in)}, Anshika\textsuperscript{a}, Jen-Chih Yao\textsuperscript{b,}\textsuperscript{c}, Xiaopeng Zhao\textsuperscript{d}}
\affil{\textsuperscript{a}Department of Mathematical Sciences, Indian Institute of Technology (BHU), Varanasi, Uttar Pradesh---221005, India}
\affil{\textsuperscript{b}Center for General Education, China Medical University, Taichung, Taiwan}
\affil{\textsuperscript{c}Academy of Romanian Scientists, 50044 Bucharest, Romania}
\affil{\textsuperscript{d}School of Mathematical Sciences, Tiangong University, Tianjin, 300387, China}
}

\maketitle

\begin{abstract}
In this article, we propose a quasi-Newton method for unconstrained set optimization problems to find its weakly minimal solutions with respect to lower set-less ordering. The set-valued objective mapping under consideration is given by a finite number of vector-valued functions that are twice continuously differentiable. At first, we derive a necessary optimality condition for identifying weakly minimal solutions for the considered set optimization problem with the help of a family of vector optimization problems and the Gerstewitz scalarizing function. To find the necessary optimality condition for weak minimal points with the help of the proposed quasi-Newton method, we use the concept of partition and formulate a family of vector optimization problems. The evaluation of necessary optimality condition for finding the weakly minimal points involves the computation of the approximate Hessian of every objective function, which is done by a quasi-Newton scheme for vector optimization problems. In the proposed quasi-Newton method, we derive a sequence of iterative points that exhibits convergence to a point which satisfies the derived necessary optimality condition for weakly minimal points. After that, we find a descent direction for a suitably chosen vector optimization problem from this family of vector optimization problems and update from the current iterate to the next iterate. The proposed quasi-Newton method for set optimization problems is not a direct extension of that for vector optimization problems, as the selected vector optimization problem varies across the iterates. The well-definedness and convergence of the proposed method are analyzed. The convergence of the proposed algorithm under some regularity condition of the stationary points, a condition on nonstationary points, the boundedness of the norm of quasi-Newton direction, and the existence of step length that satisfies the Armijo condition are derived. We obtain a local superlinear convergence of the proposed method under uniform continuity of the Hessian approximation function. Lastly, some numerical examples are given to exhibit the performance of the proposed method. Also, the performance of the proposed method with the existing steepest-descent method is given.  
\end{abstract}

\begin{keywords}
Set optimization; Quasi-Newton method; Stationary point; Weakly minimal point; Gerstewitz functional; Lower set less ordering
\end{keywords}

\section{Introduction}
A general set-valued optimization problem is given by 
\begin{equation}\tag{$A$}\label{general_eqn}
 \text{Minimize }F(x)~\text{ subject to }~x\in X,
\end{equation}
where $F$ is a set-valued map from a nonempty subset $X$ of $\mathbb{R}^n$ to $\mathbb{R}^m$, where the ordering of the elements in the image of $F$ is given by a convex cone $K\subset \mathbb{R}^m$. The solution set for the problem (\ref{general_eqn}) provides an important generalization and unification for scalar as well as vector optimization problems using different approaches (see \cite{alonso2008optimality,jahn2004some,kuroiwa1997some}). Optimization problems with set-valued objective functions or set-valued constraints have a number of applications in various areas of mathematical economics \cite{bao2010set}, finance \cite{feinstein2015comparison}, game theory, optimal control, and many others \cite{chen1998optimality,pilecka2016set}. These problems have significant applications in uncertain optimization problems, as discussed in \cite{ide2014concepts,mutapcic2009cutting}.
 
Kuroiwa \cite{kuroiwa1997some,kuroiwa2001set,kuroiwa1997cone} was the first to define solutions of set optimization problems by preorder relations of sets. In \cite{kuroiwa1997some}, solution concepts have been defined based on the approach of comparing the sets that are values of the objective function. Motivated by this work, Khan \cite{khan2016set} gave a detailed discussion on set-valued optimization maps. The existing methods for solving set optimization problems in the literature fall into one of the following groups.  
 
The first group consists of algorithms based on scalarization (see \cite{ehrgott2014minmax,eichfelder2020algorithmic,ide2014concepts}). The methods in this group are characterized by robust counterparts of vector optimization problems. In \cite{ehrgott2014minmax,ide2014concepts}, a linear scalarization technique was used to obtain the optimistic solution of the set optimization problems and extended the $\varepsilon$-constraint method for the ordering cones with nonnegative orthant to deal with the set optimization problems. The main drawback of these methods is that not all the solutions of the problem can be captured by them.

The next group consists of algorithms of sorting type as given in \cite{gunther2019computing,kobis2016treatment,kobis2018numerical}. These methods deal with set optimization problems that have a finite feasible set. In \cite{kobis2016treatment,kobis2018numerical}, K\"obis incorporated the forward and backward reduction to the algorithms of \cite{jahn2006multiobjective,jahn2011new} and proposed extended version of the algorithms. Further, G\"unther and Popovici \cite{gunther2018new}  evaluated the images of set-valued mappings whose values are in an increasing manner by scalarizing via a strongly monotone functional and employed a forward iteration procedure. The sorting-type algorithms cannot be applied to a continuous feasible set and are applicable only to a finite feasible set. Also, an additional computational cost is required for comparing sets. 

{Recently, Bouza et al.\ \cite{bouza2021steepest} pioneered generalizing classical gradient-based algorithm (started with the steepest descent method) for set optimization problems. Ghosh et al. \cite{ghosh2024newton} reported Newton method for set-valued optimization problems. The conventional steepest descent method is known to have a linear convergence rate. Moreover, a Newton method commonly does not converge if the chosen initial point is not close to the optimal solution; in addition, there is a need to compute the inverse of the Hessian at every iteration, which is costly. In quasi-Newton methods, instead of computing the Hessian inverse, an approximation of the Hessian inverse is considered. These methods use only the first-order derivative information to compute such approximation, which comparatively keeps the computational cost very low. The Broyden-Fletcher-Goldfarb-Shanno (BFGS) method is a type of quasi-Newton method in classical optimization that is designed to solve unconstrained nonlinear optimization problems. This is originated from the independent work of Broyden \cite{broyden1969new}, Fletcher \cite{fletcher1970new}, Goldfarb \cite{goldfarb1970family}, and Shanno \cite{shanno1970conditioning}. Several other extensions (or modifications) of this method are given in \cite{dai2002convergence,powell1971convergence,salim2018quasi,byrd1987global,yuan1991modified,povalej2014quasi,kumar2023quasi,mahato2023quasi,singh2024globally,upadhayay2024nonmonotone,ghosh2023infeasible} and references therein.}    

Motivated by the literature, we aim to develop the quasi-Newton method for set-valued optimization problems. The set-valued objective that we consider here is defined by a finite number of twice continuously differentiable vector-valued functions. We use the ideas from \cite{kumar2023quasi,prudente2024global,povalej2014quasi} for vector optimization problems. We approximate the Hessian matrices with the help of BFGS approximation techniques. The proposed method in this study exhibits a superlinear convergence rate and works well for highly nonlinear objective functions. 

The paper is arranged in the following order. In Section \ref{section2}, we discuss the preliminaries, basic notations, some fundamental definitions, and results that will be used throughout the paper. Section \ref{section3} consists of results on optimality conditions for weakly minimal solutions of set optimization problems. We discuss the interrelation of stationary points with weakly minimal solutions. Next, in Section \ref{section4}, we propose the quasi-Newton method for the considered set optimization problem. We report the well-definedness of the proposed algorithm with the existence result of Armijo step length and the boundedness of the descent direction. After that, we analyze the convergence of the proposed quasi-Newton method in Section \ref{section5}. Further, in Section \ref{section6}, we show the numerical implementation of our method with the help of suitable examples. Subsequently, we compare the results of the proposed algorithm with the results of the steepest descent method presented in \cite{bouza2021steepest}. Finally, we conclude the paper in Section \ref{section7} by summarizing the results and provide some ideas for further research in this direction. 
 
\section{\textbf{Preliminaries and Definitions}}\label{section2}
Throughout the paper, we use the following notations. 
\begin{itemize}
\item $\mathbb{R},~\mathbb{R}_{+},~\mathbb{R}_{++}$ denote the set of real numbers, nonnegative real numbers, and positive real numbers, respectively.
    \item $\mathbb{R}^m=\mathbb{R}\times\mathbb{R}\times\cdots\times\mathbb{R}$ ($m$-times), $\mathbb{R}_+^m=\mathbb{R}_+\times\mathbb{R}_+\times\cdots\times\mathbb{R}_+$, and $\mathbb{R}_{++}^m=\mathbb{R}_{++}\times\mathbb{R}_{++}\times\cdots\times\mathbb{R}_{++}$.
    \item $\mathcal{P}(\mathbb{R}^m)$ denotes the class of all nonempty subsets of $\mathbb{R}^m$.
    \item For any nonempty set $A\in\mathcal{P}(\mathbb{R}^m)$, the notations int$(A)$, cl($A$),  $|A|$, bd($A$), and conv($A$) denote the interior, closure, cardinality, boundary, and convex hull, respectively, of the set $A$.
    \item $\top$ denotes the transpose operator, and all the elements in $\mathbb{R}^n$ are column vectors.
    \item $\lVert\cdot\rVert$ stands for the standard Euclidean norm of a vector or spectral norm of a matrix.
    \item For a given $k\in\mathbb{N}$, $[k]$ represents the set $\{1,2,\ldots,k\}$.
    \item A cone $K \subseteq \mathbb{R}^m$ is said to be convex if $K+K=K$, solid if int$(K)\not=\emptyset$, and pointed if $K\cap(-K)=\{0\}$.
    \item Throughout, the notation $K\in\mathcal{P}(\mathbb{R}^m)$ represents a closed, convex, pointed, and solid cone. 
    \item The set $K^*=\{y\in\mathbb{R}^m~:~y^\top z\geq0 \text{ for all }z\in K\}$ represents the dual cone of $K$.
\item $\nabla f^i(x)$ or $J f^i(x)$ denotes the Jacobian of a vector-valued function $f^i: \mathbb{R}^n \to \mathbb{R}^m$ at $x$. 
\item $\mathcal{R}(J f^i(x))$ denotes the range or image space of $J f^i(x)$.
    \item For a vector-valued function $f^i: \mathbb{R}^n \to \mathbb{R}^m$, given by $$f^i(x) : = \left(f^{i, 1}(x), f^{i, 2}(x), \ldots, f^{i, m}(x)\right)^\top,$$ the notation $\nabla^2 f^i(x)$ denotes the follow matrix: 
    $$\nabla^2 f^i(x) := \left(\nabla^2 f^{i, 1}(x), \nabla^2 f^{i, 2}(x), \ldots, \nabla^2 f^{i, m}(x)\right)^\top.$$
\end{itemize}

Next, we discuss some basic definitions and results from set optimization that are used throughout the paper.
\begin{definition}(Partial ordering on $\mathbb{R}^m$ \cite{gopfert2003variational}).\label{partial} For any $y,z\in\mathbb{R}^m$, the cone $K$ generates a partial order $(\preceq)$ and a strict order $(\prec)$ on $\mathbb{R}^m$ defined as follows: 
\[
y\preceq z\iff z-y\in K,\text{ and }y\prec z\iff z-y\in \text{ int}(K).
\] 
\end{definition}

\begin{definition}(Minimal and weakly minimal elements of a set \cite{jahn2009vector}). The set of minimal and weakly minimal elements of $A\in\mathcal{P}(\mathbb{R}^m)$ with respect to $K$ is defined by 
    \begin{eqnarray*} 
    &&\text{Min}(A,K)=\{z\in A:(z-K)\cap A=\{z\}\}\text{ and }\\
    &&\text{WMin}(A,K)=\{z\in A:(z-\text{ int}(K))\cap A=\emptyset\},\text{ respectively}.
     \end{eqnarray*}
\end{definition}

\begin{proposition}\label{min_min} \emph{\cite{jahn2009vector}} Let $A\in \mathcal{P}(\mathbb{R}^m)$ be any compact set. Then, $A$ satisfies the domination property with respect to $K$, i.e., $A+K=\emph{\text{Min}}(A, K) + K$. 
\end{proposition}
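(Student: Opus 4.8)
The plan is to prove the two set inclusions $A + K \subseteq \text{Min}(A,K) + K$ and $\text{Min}(A,K) + K \subseteq A + K$ separately; the second is immediate, so the real content is the first.

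For the easy direction, note that $\text{Min}(A,K) \subseteq A$ by definition, hence $\text{Min}(A,K) + K \subseteq A + K$. For the nontrivial direction, let $a \in A$ and $k \in K$ be arbitrary; I must produce some $z \in \text{Min}(A,K)$ and $k' \in K$ with $a + k = z + k'$, i.e. $a - z \in K$, i.e. $z \preceq a$. So it suffices to show: for every $a \in A$ there exists $z \in \text{Min}(A,K)$ with $z \preceq a$. Consider the set $A_a := A \cap (a - K)$, which is nonempty (it contains $a$) and, being the intersection of the compact set $A$ with the closed set $a - K$, is compact. The idea is to find a minimal element of $A$ inside $A_a$.

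To do this I would invoke a standard existence result for minimal elements of a compact set with respect to a closed, convex, pointed cone — concretely, pick any $\xi \in \text{int}(K^*)$ (which is nonempty since $K$ is solid and pointed, so $K^*$ is solid) and minimize the continuous linear functional $z \mapsto \xi^\top z$ over the compact set $A_a$; a minimizer $z^*$ exists by Weierstrass. Then I claim $z^* \in \text{Min}(A,K)$: if not, there is $y \in A$ with $y \in z^* - K$ and $y \neq z^*$, which forces $\xi^\top y \le \xi^\top z^*$, and strict positivity of $\xi$ on $K \setminus \{0\}$ (a consequence of $\xi \in \text{int}(K^*)$ together with pointedness of $K$) gives $\xi^\top y < \xi^\top z^*$; moreover $y \in z^* - K \subseteq a - K - K = a - K$ since $K + K = K$, so $y \in A_a$, contradicting minimality of $z^*$ over $A_a$. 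Finally $z^* \in A_a$ gives $z^* \preceq a$, completing the argument.

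The main obstacle is ensuring the existence of a suitable strictly positive functional $\xi \in \text{int}(K^*)$ and the implication ``$\xi \in \text{int}(K^*)$ and $0 \neq k \in K$ $\Rightarrow$ $\xi^\top k > 0$''; this is where the hypotheses that $K$ is closed, convex, pointed, and solid are all used, and it is the one place a reader might want a citation rather than a computation. Everything else — compactness of $A_a$, the Weierstrass argument, and the absorbing identity $K + K = K$ — is routine. An alternative to the linear-functional route is to apply Zorn's lemma directly to $A_a$ ordered by $\preceq$, using compactness to control chains, but the scalarization argument is cleaner and matches the tools used elsewhere in the paper.
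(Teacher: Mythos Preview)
Your proof is correct and follows the standard scalarization route to the domination property: reduce to showing every $a\in A$ is dominated by some minimal element, then minimize a strictly positive linear functional over the compact section $A\cap(a-K)$. The paper, however, does not supply its own proof of this proposition; it simply cites the result from Jahn's monograph \cite{jahn2009vector}, so there is nothing to compare against at the level of argument. Your write-up would serve as a self-contained justification where the paper relies on an external reference.
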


Below, we discuss the Gerstewitz scalarizing function, which is an important part of the main results of the paper.
\begin{definition}(Gerstewitz function \cite{gerstewitz1983nichtkonvexe}). For an element $e\in \text{int}(K)$ and $y\in\mathbb{R}^m$, the Gerstewitz function $\mathcal{G}_e:\mathbb{R}^m\to \mathbb{R}$ associated with $e$ and $K$ is  defined by 
\[
\mathcal{G}_e(y)= \min\{t\in\mathbb{R}~:~te\in y+K\}.
\]
\end{definition}

\begin{proposition}\label{gerstewitz}\emph{(See \cite{khan2016set})}.
For a given element $e\in\emph{\text{int}}(K$), the function $\mathcal{G}_e$ has the following properties:
 \begin{enumerate}
     \item[(i)] $\mathcal{G}_e$ is sublinear on $\mathbb{R}^m$.\label{ge_1} 
     
     \item[(ii)] $\mathcal{G}_e$ is positive homogenous of degree 1 on $\mathbb{R}^m$.\label{ge_2} 

     \item[(iii)] $\mathcal{G}_e$ is Lipschitz continuous on $\mathbb{R}^m$.\label{ge_3}
     \item[(iv)] $\mathcal{G}_e$ is monotone: for any $x,y\in \mathbb{R}^m$,
     \[
      x\preceq y\implies \mathcal{G}_e(x)\leq\mathcal{G}_e(y)
     ~~~\text{ and }~~ x \prec y\implies \mathcal{G}_e(x)<\mathcal{G}_e(y).
     \]  
    
    \item[(v)]\label{ge_5} $\mathcal{G}_e$ satisfies the representability property, i.e., 
     \[
     -K=\{x\in\mathbb{R}^m:\mathcal{G}_e(x)\leq0\}\text{ and } -\emph{\text{int}}(K)=\{x\in\mathbb{R}^m:\mathcal{G}_e(x)<0\}.
     \]
     \item[(vi)] \label{ge_6} $\mathcal{G}_e$ has the translativity property, i.e., 
     \[
     \mathcal{G}_e(x + te) = \mathcal{G}_e(x) + t ~\text{ for all }~ x \in \mathbb{R}^m.  
     \]
\end{enumerate}
\end{proposition}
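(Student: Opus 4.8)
The plan is to reduce everything to one structural lemma about the family of half-lines defining $\mathcal{G}_e$. For each $y\in\mathbb{R}^m$ set $S(y):=\{t\in\mathbb{R}:te-y\in K\}$. I would first prove that $S(y)$ is a nonempty, closed, unbounded-above half-line, so that $S(y)=[\mathcal{G}_e(y),\infty)$ with the minimum attained, and in addition that $te-y\in\text{int}(K)$ if and only if $t>\mathcal{G}_e(y)$. Nonemptiness comes from $e\in\text{int}(K)$: for $t$ large, $e-y/t$ lies in a ball around $e$ contained in $K$, and $K$ is a cone, so $te-y\in K$. Boundedness below uses closedness and pointedness: if $t_k\to-\infty$ with $t_ke-y\in K$, then normalising by $|t_k|$ and passing to the limit in the closed cone $K$ gives $-e\in K$, contradicting $e\in K\cap(-K)=\{0\}$. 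Closedness of $S(y)$ is continuity of $t\mapsto te-y$ together with closedness of $K$, and the half-line (upward-closed) property is $(s-t)e\in K$ for $s\ge t$ plus $K+K=K$. For the strict characterisation I would use $K+\text{int}(K)=\text{int}(K)$ (valid because $K$ is a convex cone) in one direction and openness of $\text{int}(K)$ in the other. This lemma is where all four hypotheses on $K$ (closed, convex, pointed, solid) are consumed, and I expect it to be the only genuinely technical step; the six listed properties are then short corollaries.

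Granting the lemma, I would establish (vi), then (ii), then (i), then (iv) and (v), and finally (iii). Translativity (vi) is the substitution $s\mapsto s-t$ inside the defining minimum. Positive homogeneity (ii) is the substitution $s\mapsto s/\lambda$ for $\lambda>0$, together with $\mathcal{G}_e(0)=0$ (since $te\in K\iff t\ge0$, using $e\in K$ and pointedness). Subadditivity follows by adding $\mathcal{G}_e(y_i)e-y_i\in K$ for $i=1,2$ and invoking $K+K=K$, which shows $\mathcal{G}_e(y_1)+\mathcal{G}_e(y_2)\in S(y_1+y_2)$; combined with (ii) this gives sublinearity (i).

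For monotonicity (iv): if $y-x\in K$, adding it to $\mathcal{G}_e(y)e-y\in K$ gives $\mathcal{G}_e(y)e-x\in K$, hence $\mathcal{G}_e(x)\le\mathcal{G}_e(y)$; if instead $y-x\in\text{int}(K)$, the same sum lies in $K+\text{int}(K)=\text{int}(K)$, so the strict part of the lemma yields $\mathcal{G}_e(x)<\mathcal{G}_e(y)$. Representability (v) is immediate from $S(x)=[\mathcal{G}_e(x),\infty)$: $\mathcal{G}_e(x)\le0\iff 0\in S(x)\iff -x\in K$, and $\mathcal{G}_e(x)<0\iff -x\in\text{int}(K)$ by the strict part of the lemma. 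For Lipschitz continuity (iii), I would use $e\in\text{int}(K)$ to fix $r>0$ with $e-z\in K$ whenever $\|z\|<r$, giving $\mathcal{G}_e(z)\le1$ on that ball and hence $\mathcal{G}_e(z)\le(2/r)\|z\|$ for all $z$ by (ii); then subadditivity gives $\mathcal{G}_e(y_1)-\mathcal{G}_e(y_2)\le\mathcal{G}_e(y_1-y_2)\le(2/r)\|y_1-y_2\|$ and, symmetrically, the reverse bound, so $\mathcal{G}_e$ is globally Lipschitz with constant $2/r$. The only point that needs care throughout is to invoke attainment of the minimum (the structural lemma) before manipulating $\mathcal{G}_e$ as an attained value rather than as a bare infimum.
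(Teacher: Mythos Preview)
Your proof is correct and self-contained. The paper itself does not prove this proposition: it is stated as a known result with a citation to Khan, Tammer, and Z\u{a}linescu's book, so there is no in-paper argument to compare against. Your structural lemma about $S(y)=[\mathcal{G}_e(y),\infty)$ and the strict-interior characterisation is exactly the standard engine behind all six items, and each of your derivations from it is sound; in particular the Lipschitz bound via the ball $B(e,r)\subset K$, positive homogeneity, and subadditivity is the usual route and works as written.
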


Next, to deal with set-valued functions, we discuss the set order relations between the nonempty subsets of $\mathbb{R}^m$. 

\begin{definition}(Lower set less relations \cite{kuroiwa1997cone}).\label{set_less}
For $A$ and $B$ in $\mathcal{P}(\mathbb{R})^m$, the lower set less $(\preceq^l)$ and strict lower set less $(\prec^l)$ relations with respect to a given cone $K$ are defined by
\[
A\preceq^l B\iff B\subseteq A+K ~\text{ and }~ A\prec^l B\iff B\subseteq A+\text{ int}(K),\text{ respectively}.
\]
\end{definition}

\begin{framed}
\noindent
In this paper, we aim to derive a quasi-Newton method to identify weakly minimal solutions to the following unconstrained set optimization problem. Let $F:\mathbb{R}^n \rightrightarrows \mathbb{R}^m$ be a nonempty set-valued mapping. The unconstrained set optimization problem that we study is defined as follows: 
\[
{\preceq^l}\text{--}\underset{x\in\mathbb{R}^n}{\min}~F(x), \label{sp_equation} \tag{SOP}
\] 
where the solution concept is given by the set of weakly minimal solutions with respect to a given ordering cone $K$ in $\mathbb{R}^m$ as given below in Definition \ref{weakly_minimal_solution}.
\end{framed}

\begin{definition}\label{weakly_minimal_solution}(Weakly minimal solution of (\ref{sp_equation}) \cite{bouza2021steepest}). A point $\bar{x}\in\mathbb{R}^n$ is called a local weakly minimal solution of (\ref{sp_equation}) if there exists a neighbourhood $U\subset \mathbb{R}^n$ of $\bar{x}$ such that there does not exist any $x\in U$ with $F(x)\prec^l F(\bar{x}).$ If $U=\mathbb{R}^n$, then $\bar{x}$ is a weakly minimal solution of (\ref{sp_equation}). 
\end{definition}

\begin{framed}
\begin{assumption} \label{assumption} 
In order to deal with set optimization problem \eqref{sp_equation}, we assume the following structure of $F$ throughout the paper. The function $F:\mathbb{R}^n\rightrightarrows\mathbb{R}^m$ in \eqref{sp_equation} is given by finitely many functions as follows: 
\[
F(x)=\left\{f^1(x),f^2(x),\ldots,f^p(x)\right\},~ x\in\mathbb{R}^n, 
\]
where $f^1,f^2,\ldots,f^p:\mathbb{R}^n\to\mathbb{R}^m$ are twice continuously differentiable on $\mathbb{R}^n$.  
\end{assumption}
\end{framed}

\section{Optimality Conditions for Set-Valued Mappings}\label{section3}
In this section, we discuss results on optimality conditions for weakly minimal solutions of (\ref{sp_equation}) under Assumption \ref{assumption}. These notions are the foundation for constructing the proposed quasi-Newton method to capture weakly minimal solutions of \eqref{sp_equation}. The contribution of this section is divided into two subsections. 

\begin{enumerate}[(i)] 
    \item To identify the sequence of iterates in the proposed quasi-Newton method, we figure out a family of vector optimization problems using the concept of partition set at a point. Thereafter, we discuss the concept of stationary point for \eqref{sp_equation} and interrelate stationary points with weakly minimal solutions using the defined family of vector optimization problems. 
    \item We derive a necessary optimality condition for weakly minimal points of \eqref{sp_equation}. In the process of evaluating these points, we approximate the Hessian corresponding to each objective function $f^1,f^2,\ldots,f^p$ with the help of the BFGS methods for vector optimization problems {(see \cite{povalej2014quasi,prudente2024global,kumar2023quasi})}. 
\end{enumerate}

\subsection{\texorpdfstring{Family of Vector Optimization Problems to Find Weakly Minimal Solutions of \eqref{sp_equation}}{Family of Vector Optimization Problems to Find Weakly Minimal Solutions}}
We begin by discussing some index-related set-valued mappings as given in \cite{bouza2021steepest}. 

\begin{definition}(Active indices for set-valued maps \cite{bouza2021steepest}).
\begin{enumerate}
\item[(i)] The active index of minimal elements associated with the set-valued mapping $F$ of (\ref{sp_equation}) is $I:\mathbb{R}^n\rightrightarrows[p]$, defined as
\[
I(x)=\{i\in[p]: f^i(x)\in\text{ Min}(F(x),K)\}.
\]
\item[(ii)] The active index of weakly minimal elements associated with the set-valued mapping $F$ is $I_W:\mathbb{R}^n\rightrightarrows[p]$, defined by
\[
I_W(x)=\{i\in[p]: f^i(x)\in\text{ WMin}(F(x),K)\}.
\]
\item[(iii)] For a given $r\in\mathbb{R}^m$, the set-valued mapping $I_r:\mathbb{R}^n\rightrightarrows[p]$ is given by 
\[ 
I_r(x)=\{i\in I(x): f^i(x)=r\}.
\] 
It is to notice that for any $u\in\mathbb{R}^m$, $I_u(x)= \emptyset $ for $p \notin \text{Min}(F(x),K)$;~ $I_u(x) \cap I_v(x) = \emptyset$ for any $u \neq v \in \mathbb{R}^m$ and $I(x)=\bigcup\limits_{u\in\text{Min}(F(x),K)} I_u(x)$.
\end{enumerate}
\end{definition}

\begin{definition}\label{cardinality_w}(Cardinality of a set of minimal elements \cite{bouza2021steepest}).
The map $w:\mathbb{R}^n\to \mathbb{N}\cup\{0\}$, which is defined by 
\[w(x)= |\text{Min}(F(x),K)|\]
is called the cardinality of the set of minimal elements of $F(x)$ with respect to $K$. Further for simplicity, we use $\bar{w}=w(\bar{x})$, where $\bar{x}\in\mathbb{R}^n$. 
\end{definition}

\begin{definition}(Partition set at a point \cite{bouza2021steepest}).
Let us consider an element $x\in\mathbb{R}^n$ and an enumeration 
$\{r_1^x,r_2^x,\ldots,r^x_{w(x)}\}$ of the set $\text{Min}(F(x),K)$. The partition set at $x$ is defined by $P_x=\prod_{j=1}^{w(x)} I_{r_j^x}(x).$
\end{definition}

Throughout the paper, for a given iterative point $x_k \in \mathbb{R}^n$, a generic element of the partition set $P_{x_k}$ is denoted by $a^k$. For every $j\in[w(x_k)]$, we denote the $j$-th component of $a^k$ by $a^k_j$, where $k = 1, 2, 3, \ldots$. Specifically, if $\lvert P_{x_k}\rvert=p_k$ and Min$(F(x_k),K) = \{r_1^{x_k},r_2^{x_k},\ldots,r^{x_k}_{w(x_{k})}\}$, then 
\[ P_{x_k}=\left\{a^1,a^2,\ldots,a^{p_k}\right\},\]
where for each $k=1,2,3,\ldots,p_k$, 
\[ a^k=\left(a^k_1,a^k_2,\ldots,a^k_{w(x_k)}\right),~a_j^k \in I_{r_{j}^{x_k}},~j\in [w(x_k)]. \]

Now, we discuss a family of vector optimization problems that will help to find the weakly minimal solutions of (\ref{sp_equation}). We start with the following result from \cite{bouza2021steepest}.

\begin{theorem}\emph{(See \cite{bouza2021steepest}).}\label{equivalence_relation}
Let $P_{\bar{x}}$ be the partition set at $\bar{x}$ and $\bar{w} = w(\bar{x})$. For every $a=(a_1,a_2,\ldots,a_{\bar{w}})\in P_{\bar{x}}$, define a vector-valued function  $\widetilde{f}^a:\mathbb{R}^n\to\prod_{j=1}^{\bar{w}} \mathbb{R}^m$ by 
\[ 
\widetilde{f}^a(x)= \left(
f^{a_1}(x), f^{a_2}(x), \ldots, f^{a_{\bar{w}}}(x)\right)^\top.  
\] 
Let $\widetilde{K}\in\mathcal{P}(\mathbb{R}^{m\bar{w}})$ be the cone given by $\widetilde{K}=\prod_{j=1}^{\bar{w}} K,$ and $\preceq_{\widetilde{K}}$ denote  the partial order in $\mathbb{R}^{m\bar{w}}$ induced by $\widetilde{K}$. Then, $\bar{x}$ is a local weakly minimal solution of (\ref{sp_equation}) if and only if for every $a\in P_{\bar{x}}$, $\bar{x}$ is a local weakly minimal solution of the vector optimization problem 
 \[
{\preceq_{\widetilde{K}}}\text{--}\underset{x\in\mathbb{R}^n}{\min}~\widetilde{f}^a(x).\tag{VOP}\label{vp_equation}
 \]
\end{theorem}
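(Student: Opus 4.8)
The plan is to unwind both implications directly from the definitions of $\prec^l$ and of weakly minimal solutions, using the partition structure at $\bar x$ as the bridge between the set-valued problem \eqref{sp_equation} and the family of vector problems \eqref{vp_equation}. The crucial observation I would establish first is a pointwise translation: for a fixed $x$ near $\bar x$, the relation $F(x) \prec^l F(\bar x)$ holds if and only if every element of $\mathrm{Min}(F(\bar x),K)$ (equivalently, by Proposition~\ref{min_min}'s domination property, every element of $F(\bar x)$) lies in $F(x) + \mathrm{int}(K)$. This in turn means: for each minimal value $r_j^{\bar x}$, there is some index $i \in [p]$ with $f^i(x) \prec r_j^{\bar x}$. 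The indices available to "dominate" $r_j^{\bar x}$ are exactly those in $I_{r_j^{\bar x}}(\bar x)$ together with the non-minimal indices — but non-minimal values already dominate $r_j^{\bar x}$ at $x = \bar x$ only in the weak sense, so the honest combinatorial content is that a choice function $a \in P_{\bar x}$ records one candidate dominator $a_j \in I_{r_j^{\bar x}}(\bar x)$ per minimal value.

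For the forward direction (contrapositive): suppose $\bar x$ is \emph{not} a local weakly minimal solution of some \eqref{vp_equation} with $a \in P_{\bar x}$; then in every neighbourhood $U$ there is $x$ with $\widetilde f^a(x) \prec_{\widetilde K} \widetilde f^a(\bar x)$, i.e. $f^{a_j}(x) \prec f^{a_j}(\bar x) = r_j^{\bar x}$ for all $j \in [\bar w]$. Hence each minimal value $r_j^{\bar x}$ of $F(\bar x)$ satisfies $r_j^{\bar x} \in F(x) + \mathrm{int}(K)$, so $\mathrm{Min}(F(\bar x),K) \subseteq F(x) + \mathrm{int}(K)$; adding $\mathrm{int}(K)$ and invoking $F(\bar x) + K = \mathrm{Min}(F(\bar x),K) + K$ (Proposition~\ref{min_min}, using that $F(\bar x)$ is finite hence compact) gives $F(\bar x) \subseteq F(x) + \mathrm{int}(K)$, i.e. $F(x) \prec^l F(\bar x)$. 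Thus $\bar x$ is not a local weakly minimal solution of \eqref{sp_equation}.

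For the reverse direction (also contrapositive): suppose $\bar x$ is not a local weakly minimal solution of \eqref{sp_equation}, so for every neighbourhood $U$ there is $x \in U$ with $F(x) \prec^l F(\bar x)$, meaning $F(\bar x) \subseteq F(x) + \mathrm{int}(K)$. In particular each $r_j^{\bar x} \in F(x) + \mathrm{int}(K)$, so there is an index $\sigma(j) \in [p]$ with $f^{\sigma(j)}(x) \prec r_j^{\bar x} = f^{\sigma(j)}(\bar x)$ (the last equality because $f^{\sigma(j)}(x) \prec r_j^{\bar x}$ together with $f^{\sigma(j)}(\bar x) \preceq$-comparability forces, via the structure of $F(\bar x)$ and minimality of $r_j^{\bar x}$, that $\sigma(j)$ attains value $r_j^{\bar x}$ at $\bar x$; this is the delicate point and I address it below). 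Then $\sigma(j) \in I_{r_j^{\bar x}}(\bar x)$, so $a := (\sigma(1),\ldots,\sigma(\bar w)) \in P_{\bar x}$, and $\widetilde f^a(x) \prec_{\widetilde K} \widetilde f^a(\bar x)$, contradicting local weak minimality of $\bar x$ for \eqref{vp_equation} with this particular $a$.

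\textbf{Main obstacle.} The subtle step is the reverse direction's claim that a dominator of $r_j^{\bar x}$ at the nearby point $x$ must correspond to an index that \emph{equals} $r_j^{\bar x}$ at $\bar x$ (so that it lies in $I_{r_j^{\bar x}}(\bar x)$ and the tuple $a$ genuinely belongs to $P_{\bar x}$). A priori $f^{\sigma(j)}(\bar x)$ could be a non-minimal value or a different minimal value of $F(\bar x)$. Handling this requires a local continuity/stability argument: one shrinks $U$ so that, by continuity of the finitely many $f^i$ and a separation argument using $\mathrm{int}(K)$, the index sets and the partial-order relationships among the $\{f^i(x)\}$ that are relevant to dominating $F(\bar x)$ are controlled near $\bar x$ — essentially showing $I_{r_j^{\bar x}}(x') \subseteq I_{r_j^{\bar x}}(\bar x)$-type inclusions or, more carefully, that the only indices that can strictly dominate $r_j^{\bar x}$ from points arbitrarily close to $\bar x$ are those active for $r_j^{\bar x}$ at $\bar x$ itself. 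This localization is exactly what makes the statement about \emph{local} (not global) weak minimality, and I expect it to be where the original proof in \cite{bouza2021steepest} spends most of its effort; I would cite or reproduce that lemma rather than re-derive it from scratch.
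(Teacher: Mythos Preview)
The paper does not supply its own proof of this theorem: it is stated with the attribution ``(See \cite{bouza2021steepest})'' and used as a black box. So there is nothing in the present paper to compare your argument against; I can only comment on the argument itself.

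Your forward direction is clean and correct: if some $a\in P_{\bar x}$ admits $x$ with $f^{a_j}(x)\prec r_j^{\bar x}$ for all $j$, then $\mathrm{Min}(F(\bar x),K)\subseteq F(x)+\mathrm{int}(K)$, and the domination property (Proposition~\ref{min_min}) upgrades this to $F(\bar x)\subseteq F(x)+\mathrm{int}(K)$.

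For the reverse direction, your identification of the obstacle is right, but the resolution is both simpler and slightly different from what you sketch. Two points:
\begin{itemize}
\item You construct $a$ from a \emph{single} nearby $x$, but to conclude that $\bar x$ fails to be locally weakly minimal for \eqref{vp_equation} you need a \emph{sequence} $x_n\to\bar x$ all witnessing $\widetilde f^a(x_n)\prec_{\widetilde K}\widetilde f^a(\bar x)$ for one fixed $a$. Since $[p]^{\bar w}$ is finite, pigeonhole on the index tuples $(\sigma_n(1),\ldots,\sigma_n(\bar w))$ gives a subsequence along which the tuple is constant; call it $a$.
\item Once the index $i_j:=\sigma(j)$ is fixed along the subsequence, the ``delicate point'' dissolves: from $f^{i_j}(x_n)\prec r_j^{\bar x}$ and $x_n\to\bar x$ you get $r_j^{\bar x}-f^{i_j}(\bar x)\in K$ by closedness of $K$, i.e.\ $f^{i_j}(\bar x)\preceq r_j^{\bar x}$. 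Since $r_j^{\bar x}\in\mathrm{Min}(F(\bar x),K)$ and $f^{i_j}(\bar x)\in F(\bar x)$, the definition of $\mathrm{Min}$ forces $f^{i_j}(\bar x)=r_j^{\bar x}$, hence $i_j\in I_{r_j^{\bar x}}(\bar x)$ and $a\in P_{\bar x}$.
\end{itemize}
No separate stability lemma or separation argument is needed; it is just finiteness (pigeonhole) plus closedness of $K$ plus the defining property of minimal elements. With these two adjustments your proof is complete.
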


Next, to find a necessary optimality condition for weakly minimal solutions of (\ref{sp_equation}),  we provide the concept of a stationary point for  (\ref{sp_equation}).

\begin{definition}(Stationary points of \eqref{sp_equation}  \cite{bouza2021steepest}).\label{stationary_dfn} 
A point $\bar{x}$ is called a stationary point of (\ref{sp_equation}) if for every $a=(a_1,a_2,\ldots,a_{\bar{w}})\in P_{\bar{x}}$ and $u\in\mathbb{R}^n$, there exists $j\in[\bar{w}]$ such that 
\begin{eqnarray}
\nabla f^{a_j}(\bar{x})^\top u \not\in -\text{ int}(K), \text{ i.e., }\mathcal{G}_e(\nabla f^{a_j}(\bar{x})^\top u) \geq 0.\label{stationary_1}
\end{eqnarray}   
\end{definition}

{
\begin{definition}(Stationary point for \eqref{vp_equation} \cite{drummond2005steepest}).
A point $\bar{x}$ is called a stationary (or critical) point of \eqref{vp_equation} if for every $a=(a_1,a_2,\ldots a_{\bar{w}})\in P_{\bar{x}}$ and $u\in\mathbb{R}^n$, there exists $j\in[\bar{w}]$ such that 
\[
\mathcal{R}(\mathcal{J} f^{a_j}(\bar{x})\cap (-\text{int}(K))=\emptyset,
\]
Therefore, $\bar{x}$ is stationary if and only if for every $a=(a_1,a_2,\ldots a_{\bar{w}})\in P_{\bar{x}}$ and for all $u\in\mathbb{R}^n$, we have
\[
\nabla f^{a_j}(\bar{x})^{\top} u\not\in -\text{int}(K)\text{ for all }j\in[\bar{w}]. 
\]  
\end{definition}}

\begin{lemma}\label{rtyrrsv}
A point $\bar{x} \in \mathbb{R}^n$ is a stationary point of \eqref{sp_equation}  if and only if  for every $a \in {P}_{\bar x}$, $\bar x$ is a stationary point of \eqref{vp_equation} for every $a \in {P}_{\bar{x}}$. 
\end{lemma}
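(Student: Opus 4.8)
The plan is to show that both sides of the biconditional unfold, via the definitions, to exactly the same elementary statement about the Jacobians $\nabla f^{a_1}(\bar x),\dots,\nabla f^{a_{\bar w}}(\bar x)$; the equivalence then follows by matching the quantifier over $a\in P_{\bar x}$.

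First I would fix $a=(a_1,\dots,a_{\bar w})\in P_{\bar x}$ and rewrite the criticality of $\bar x$ for the vector problem \eqref{vp_equation} associated with this $a$. Since $\widetilde f^a=(f^{a_1},\dots,f^{a_{\bar w}})^\top$, a direction $u\in\mathbb R^n$ is sent by the Jacobian to
\[
\mathcal J\widetilde f^a(\bar x)\,u=\bigl(\nabla f^{a_1}(\bar x)^\top u,\ \dots,\ \nabla f^{a_{\bar w}}(\bar x)^\top u\bigr),
\]
so $\mathcal R(\mathcal J\widetilde f^a(\bar x))$ is precisely the set of such tuples. Because $\widetilde K=\prod_{j=1}^{\bar w}K$ is a finite Cartesian product, $\text{int}(\widetilde K)=\prod_{j=1}^{\bar w}\text{int}(K)$, hence $-\text{int}(\widetilde K)=\prod_{j=1}^{\bar w}\bigl(-\text{int}(K)\bigr)$, so membership in $-\text{int}(\widetilde K)$ decouples coordinatewise. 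Combining the two observations, $\mathcal R(\mathcal J\widetilde f^a(\bar x))\cap(-\text{int}(\widetilde K))=\emptyset$ holds iff there is no $u\in\mathbb R^n$ with $\nabla f^{a_j}(\bar x)^\top u\in-\text{int}(K)$ for all $j\in[\bar w]$, i.e.\ for every $u\in\mathbb R^n$ there exists $j\in[\bar w]$ with $\nabla f^{a_j}(\bar x)^\top u\notin-\text{int}(K)$; by the representability property of the Gerstewitz functional (Proposition \ref{gerstewitz}(v)) this last failure of inclusion is exactly $\mathcal G_e(\nabla f^{a_j}(\bar x)^\top u)\ge 0$. Thus $\bar x$ is a stationary point of \eqref{vp_equation} associated with $a$ if and only if condition \eqref{stationary_1} holds for this $a$ and every $u\in\mathbb R^n$.

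It then remains only to compare with Definition \ref{stationary_dfn}, which states that $\bar x$ is a stationary point of \eqref{sp_equation} precisely when \eqref{stationary_1} holds for \emph{every} $a\in P_{\bar x}$ and every $u\in\mathbb R^n$. By the preceding paragraph, for each fixed $a$ this is exactly the assertion that $\bar x$ is stationary for \eqref{vp_equation} associated with $a$, so quantifying over $a\in P_{\bar x}$ delivers both implications of the lemma at once. There is no genuinely hard step here; the argument is bookkeeping once the definitions are unwound, the only point meriting an explicit line being the coordinatewise decoupling $-\text{int}(\widetilde K)=\prod_{j=1}^{\bar w}(-\text{int}(K))$ (which uses that $\widetilde K$ is a finite product of copies of $K$) together with the elementary equivalence $v\notin-\text{int}(K)\iff\mathcal G_e(v)\ge 0$ recalled in Section \ref{section2}.
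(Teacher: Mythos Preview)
Your proposal is correct and follows essentially the same approach as the paper: both arguments reduce to observing that, for each fixed $a\in P_{\bar x}$, stationarity of $\bar x$ for the associated \eqref{vp_equation} is precisely condition \eqref{stationary_1}, so the two notions coincide once one quantifies over $a$. The only stylistic difference is that the paper proves each implication separately by contradiction, whereas you unfold both definitions directly and match them; your explicit mention of the decomposition $\text{int}(\widetilde K)=\prod_{j=1}^{\bar w}\text{int}(K)$ is a clarification the paper leaves implicit.
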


\begin{proof}
Let $\bar{x}$ be a stationary point of  (\ref{sp_equation}). We prove that $\bar{x}$ is a stationary point of \eqref{vp_equation}. Let us assume contrarily that for some $a \in {P}_{\bar{x}}$, the point $\bar{x}$ is not a stationary point of (\ref{vp_equation}). Then, there exists $\bar u \in \mathbb R^{n}$ such that
\begin{align}\label{rsexi}
       & \nabla f^{a_{j}}(\bar{x})^{\top} \bar u \in -\text{int}(K)  \text{ for all }j \in [\bar{w}]
    \end{align}
which is contradictory to the assumption that $\bar x$ is a stationary point (see \eqref{stationary_1}) of (\ref{sp_equation}). Hence, $\bar{x}$ must be a stationary point of (\ref{vp_equation}) for every $a \in {P}_{\bar{x}}$. \\

Conversely, assume that $\bar{x}$ is a stationary point of (\ref{vp_equation}) for every $a \in {P}_{\bar{x}}$. We prove that $\bar{x}$ is a stationary point of \eqref{sp_equation}. Let us assume contrarily that $\bar x$ is not a stationary point of (\ref{sp_equation}). Then, there exists $ \bar{a}\in P_{\bar{x}}$ and $\bar u \in \mathbb R^{n}$ such that  
\allowdisplaybreaks
\begin{align*} 
 & \nabla f^{\bar{a}_{j}}(\bar x)^{\top} \bar u \in -\text{int}(K) ~\forall ~j \in [\bar{w}], 
\end{align*}
which is contradictory to $\bar{x}$ a stationary point of (\ref{vp_equation}) for every $a \in P_{\bar x}$. Hence, we conclude that $\bar{x}$ must be a stationary point of (\ref{sp_equation}) for every $a \in {P}_{\bar{x}}$.
\end{proof}

\begin{lemma}\label{weak_min_iff_stationary}
If a point $\bar x$ is a local weakly minimal point of \eqref{sp_equation}, then  $\bar x$ is a stationary point of \eqref{sp_equation}.    
\end{lemma}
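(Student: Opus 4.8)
The plan is to chain together Theorem \ref{equivalence_relation} and Lemma \ref{rtyrrsv}, reducing the set-valued statement to a collection of vector-valued statements where the result is classical. First I would invoke Theorem \ref{equivalence_relation}: since $\bar x$ is a local weakly minimal solution of \eqref{sp_equation}, for every $a \in P_{\bar x}$ the point $\bar x$ is a local weakly minimal solution of the vector optimization problem \eqref{vp_equation} associated with $\widetilde f^a$ and the product cone $\widetilde K = \prod_{j=1}^{\bar w} K$. So it suffices to show that a local weak minimizer of \eqref{vp_equation} is a stationary point of \eqref{vp_equation} for every $a \in P_{\bar x}$, and then apply Lemma \ref{rtyrrsv} to conclude that $\bar x$ is stationary for \eqref{sp_equation}.

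The core step is therefore the well-known fact that, for a differentiable vector-valued function, local weak optimality implies criticality in the sense of Definition (Stationary point for \eqref{vp_equation}). I would argue by contraposition: suppose $\bar x$ is not stationary for \eqref{vp_equation} for some $a \in P_{\bar x}$, so there exists $\bar u \in \mathbb{R}^n$ with $\nabla f^{a_j}(\bar x)^\top \bar u \in -\,\text{int}(K)$ for every $j \in [\bar w]$, i.e. $J\widetilde f^a(\bar x)\,\bar u \in -\,\text{int}(\widetilde K)$. A first-order Taylor expansion of each $f^{a_j}$ along the ray $\bar x + t\bar u$ gives, for $t>0$ small,
\[
f^{a_j}(\bar x + t\bar u) = f^{a_j}(\bar x) + t\,\nabla f^{a_j}(\bar x)^\top \bar u + o(t),
\]
so $f^{a_j}(\bar x + t\bar u) - f^{a_j}(\bar x) = t\big(\nabla f^{a_j}(\bar x)^\top \bar u + o(1)\big)$. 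Since $-\,\text{int}(K)$ is open and $\nabla f^{a_j}(\bar x)^\top \bar u \in -\,\text{int}(K)$, the bracketed term stays in $-\,\text{int}(K)$ for $t$ sufficiently small, hence $f^{a_j}(\bar x + t\bar u) \prec f^{a_j}(\bar x)$ for all $j$ simultaneously (using a common bound on $t$ over the finitely many indices $j \in [\bar w]$). This means $\widetilde f^a(\bar x + t\bar u) \prec_{\widetilde K} \widetilde f^a(\bar x)$ for all small $t>0$, contradicting local weak minimality of $\bar x$ for \eqref{vp_equation}. Equivalently one may phrase the contradiction via the Gerstewitz functional using Proposition \ref{gerstewitz}(v): $\mathcal{G}_e(\nabla f^{a_j}(\bar x)^\top \bar u) < 0$ for all $j$, which violates \eqref{stationary_1} directly once we know $\bar x$ is stationary for \eqref{sp_equation} — but it is cleaner to run the argument entirely at the level of \eqref{vp_equation} and then transport back.

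The only mildly delicate point — and the part I would be most careful with — is the uniformity of the little-$o$ term across the finitely many component functions $f^{a_1}, \ldots, f^{a_{\bar w}}$: each Taylor remainder gives a threshold $t_j > 0$, and one takes $t < \min_{j \in [\bar w]} t_j$, which is legitimate because $\bar w = w(\bar x)$ is finite. Everything else is a formal consequence of the openness of $\text{int}(K)$ (guaranteed since $K$ is solid) and twice continuous differentiability of the $f^i$ from Assumption \ref{assumption}, which certainly gives the required $C^1$ smoothness. I do not anticipate any genuine obstacle; the lemma is essentially the set-optimization analogue of "local minimizer $\Rightarrow$ stationary," obtained by layering Theorem \ref{equivalence_relation} on top of the classical vector-valued version and then reassembling via Lemma \ref{rtyrrsv}.
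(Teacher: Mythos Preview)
Your proposal is correct and follows essentially the same route as the paper: reduce via Theorem \ref{equivalence_relation} to the vector problems \eqref{vp_equation}, argue by contradiction using a first-order Taylor expansion along a descent direction $\bar u$ together with the openness of $-\operatorname{int}(K)$, and transport the conclusion back to \eqref{sp_equation} through Lemma \ref{rtyrrsv}. Your handling of the Taylor remainder (absorbing the $o(1)$ term into the open set $-\operatorname{int}(K)$ uniformly over the finitely many indices $j\in[\bar w]$) is in fact slightly cleaner than the paper's own presentation of the same step.
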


\begin{proof}
Let $\bar x$ be a local weakly minimal solution of \eqref{sp_equation}. Assume contrarily that $\bar x$ is not a stationary point of (\ref{sp_equation}). Then, in view of Lemma \ref{rtyrrsv}, there exists at least one $\bar{a} = (\bar{a}_{1}, \bar{a}_{2}, \ldots, \bar{a}_{\bar{w}})^{\top} \in P_{\bar x}$ such that $\bar{x}$ is not a stationary point of \eqref{vp_equation}. 
That is, there exists $\bar u \in \mathbb R^{n}$ such that 
\begin{equation}\label{tt_K}
\nabla f^{\bar a_{j}}(\bar x)^{\top} \bar u \in -\text{int}(K) \text{ for all }j \in [ \bar{w}].
\end{equation}

\noindent
Since $f^{a_{j}}: \mathbb R^{n} \rightarrow \mathbb R^{m}$ is continuously differentiable (Assumption \ref{assumption}) for all  $j \in [\bar{w}]$, we have 
\begin{align}\label{tylor_series}
f^{\bar a_{j}}(x) = f^{\bar a_{j}}(\bar x) + \nabla f^{\bar a_{j}}(\bar x)^{\top}(x-\bar x)+ o(\lVert x-\bar x\rVert), \text{ where }\lim\limits_{x \to \bar x} \frac{o(\lVert x -\bar x\rVert)}{\lVert x-\bar{x}\rVert}=0.
\end{align}
Note that $\bar x$ is a local weakly minimal solution of \eqref{sp_equation}. Therefore, by Theorem \ref{equivalence_relation}, $\bar x$ is a local weakly minimal point of \eqref{vp_equation} for all $a \in P_{\bar x}$. So, $\bar x$ is a local weakly minimal point of \eqref{vp_equation} for $\bar a \in P_{\bar x}$. Thus, there exists a neighborhood $U$ of $\bar x$ such that 
 $$\nexists ~x \in U \text{ with } {{f}^{\bar a_j}}(x) - {{f}^{\bar a_{j}}}(\bar x) \in -\text{int}(K) \text{ for all } j \in [\bar w ].$$ 
From (\ref{tylor_series}), there exists a neighborhood $B \subseteq U$ of $\bar x$  such that for all $j \in [\bar w]$, 
      \begin{align}\label{tyu}
      & \nabla f^{\bar a_{j}}(\bar x)^{\top}(x-\bar x) \notin -\text{int} (K)\text{ for all } x \in B.
      \end{align}
 As $B$ is a neighborhood of $\bar x$, there exists $\bar t > 0$ such that $x' = \bar x + \bar t \bar{u} \in B$. The relation \eqref{tyu} with $x = x'$ yields 
 \[\nabla f^{\bar a_{j}}(\bar x)^{\top} \bar u \notin -\text{int} (K), 
 \] 
 which is contradictory to \eqref{tt_K}. Therefore, $\bar x$ is a stationary point for (\ref{sp_equation}).
\end{proof}

\subsection{\texorpdfstring{Hessian Approximation and Necessary Condition for Weakly Minimal Solutions of \eqref{sp_equation}}{Hessian Approximation and Necessary Condition for Weakly Minimal Solutions}}

To find a quadratic approximation of the functions $f^1,f^2,\ldots,f^p$, we use a positive definite approximation of their Hessian by the Broyden, Fletcher, Goldfarb, and Shanno (BFGS) \cite{broyden1969new,fletcher1970new,goldfarb1970family,shanno1970conditioning} methods.  

Corresponding to a given initial point $x_0$, for the function $f^i$, $i\in[p]$, we generate a sequence of symmetric positive definite matrices $\{B^i(x_k)\}$ starting with an initial symmetric positive definite matrix $B^i(x_0)$. For each $i\in[p]$, the quadratic approximation of each $f^i$ about $x_k$ is given by 
\begin{align*}
  &Q^i(x) = f^i(x_k)+\nabla f^i(x_k)^\top (x-x_k)+\tfrac{1}{2}(x-x_k)^\top B^i(x_k)(x-x_k). 
\end{align*}
We choose $\{B^i(x_k)\}$ that satisfies the quasi-Newton equation 
\[B^i(x_{k + 1}) ~ s_k = y^i_k,~ k = 0, 1, 2, \ldots,   \]
where $s_{k}=x_{k+1}-x_{k}$ and $y^i_{k} = \nabla f^i(x_{k+1})-\nabla f^i(x_{k})$. To ensure symmetric and positive definiteness of all the terms in the sequence $\{B^i(x_k)\}$, for a given symmetric and positive definite $B^i(x_0)$, we use the BFGS update formula at any point $x_k$ by 
\begin{eqnarray}\label{bfgs_formula}
B^i(x_{k+1})=B^i(x_{k})-\frac{B^i(x_{k})s_{k} s_{k}^\top B^i(x_{k})}{s^\top_{k}B^i(x_{k})s_{k}}+ {\frac{y^i_{k} y^{i^\top}_{k}}{s^\top_{k}y^i_{k}}}.
\end{eqnarray} 
It can be observed from \cite[Section 6.1]{wright2006numerical} that for every $k\in\mathbb{N}\cup\{0\}$, if $B^i(x_k)$ is positive definite, then $B^i(x_{k+1})$ remains positive definite.

\begin{remark}\label{remark_eigen_value_bigger_than_rho}(See \cite{wright2006numerical})
 {The BFGS method satisfies the curvature condition $s_{k}^{\top} y_k^i >0$. If each $f^i$, $i=1,2,\ldots,p$, is strongly convex, then the curvature condition is satisfied by any two points $x_k$ and $x_{k+1}$.}
\end{remark}

Next, we derive a necessary condition for weakly minimal points of \eqref{sp_equation}. We start with the following lemma.

\begin{lemma}\label{g_is_strong_convex}
For any given $x \in \mathbb{R}^n$, $a \in P_x$ and $j \in [w(x)]$, the function $g: \mathbb{R}^n \to \mathbb{R}$ given by 
\[g(u) = \mathcal{G}_{e}\left(\nabla f^{a_j}(x)^{\top}u + \tfrac{1}{2}u^\top B^{a_j}(x)u\right),
\]
where $B^{a_j}(x)\in\mathbb{R}^{m\times m}$ is a symmetric positive definite matrix, is strongly convex on $\mathbb{R}^n$. 
\end{lemma}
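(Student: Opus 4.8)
The plan is to verify directly the defining inequality of strong convexity: to exhibit a constant $\rho>0$ such that
\[
g(\lambda u+(1-\lambda)v)\ \le\ \lambda\, g(u)+(1-\lambda)\, g(v)-\tfrac{\rho}{2}\,\lambda(1-\lambda)\lVert u-v\rVert^{2}
\]
for every $u,v\in\mathbb{R}^{n}$ and every $\lambda\in[0,1]$. Write $h(u):=\nabla f^{a_j}(x)^{\top}u+\tfrac12 u^{\top}B^{a_j}(x)u$, so that $g=\mathcal{G}_{e}\circ h$, and decompose $h=\ell+q$ into its affine part $\ell(u):=\nabla f^{a_j}(x)^{\top}u$ and its quadratic part $q(u):=\tfrac12 u^{\top}B^{a_j}(x)u$. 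For the affine part one has $\ell(\lambda u+(1-\lambda)v)=\lambda\ell(u)+(1-\lambda)\ell(v)$, and for the quadratic part I would invoke the elementary two-point identity for quadratic forms, which gives, with $w:=\lambda u+(1-\lambda)v$,
\[
q(w)=\lambda\, q(u)+(1-\lambda)\, q(v)-\tfrac12\lambda(1-\lambda)\,(u-v)^{\top}B^{a_j}(x)(u-v).
\]
Adding the two displays, $h(w)=\lambda h(u)+(1-\lambda)h(v)-\tfrac12\lambda(1-\lambda)\,d(u,v)$, where $d(u,v)$ is the vector formed by the quadratic-form terms $(u-v)^{\top}B^{a_j}(x)(u-v)$. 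Since $B^{a_j}(x)$ is symmetric positive definite, $d(u,v)$ is bounded below in the cone order: there is $\mu>0$ (a positive multiple of the least eigenvalue of $B^{a_j}(x)$) with $d(u,v)-\mu\lVert u-v\rVert^{2}e\in K$, where one uses $e\in\text{int}(K)$ to absorb the componentwise strict positivity of the quadratic form into $K$.

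Next I would transport the identity for $h$ through the scalarization using Proposition~\ref{gerstewitz}. By sublinearity (part (i)) and positive homogeneity of degree one (part (ii)) of $\mathcal{G}_{e}$,
\[
g(w)=\mathcal{G}_{e}\!\left(\lambda h(u)+(1-\lambda)h(v)-\tfrac12\lambda(1-\lambda)\,d(u,v)\right)\le\lambda\,\mathcal{G}_{e}(h(u))+(1-\lambda)\,\mathcal{G}_{e}(h(v))+\tfrac12\lambda(1-\lambda)\,\mathcal{G}_{e}(-d(u,v)),
\]
that is, $g(w)\le\lambda g(u)+(1-\lambda)g(v)+\tfrac12\lambda(1-\lambda)\,\mathcal{G}_{e}(-d(u,v))$. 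It then remains to bound $\mathcal{G}_{e}(-d(u,v))$ from above by a negative multiple of $\lVert u-v\rVert^{2}$. Here I would use the cone lower bound established above: from $d(u,v)-\mu\lVert u-v\rVert^{2}e\in K$, i.e.\ $\mu\lVert u-v\rVert^{2}e-d(u,v)\in -K$, the representability property (part (v)) yields $\mathcal{G}_{e}\!\left(\mu\lVert u-v\rVert^{2}e-d(u,v)\right)\le 0$, and the translativity property (part (vi)) rewrites the left-hand side as $\mathcal{G}_{e}(-d(u,v))+\mu\lVert u-v\rVert^{2}$; hence $\mathcal{G}_{e}(-d(u,v))\le-\mu\lVert u-v\rVert^{2}$. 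Substituting this into the previous inequality gives the target inequality with $\rho=\mu>0$, so $g$ is strongly convex on $\mathbb{R}^{n}$.

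I expect the only genuinely delicate step to be establishing the cone lower bound $d(u,v)-\mu\lVert u-v\rVert^{2}e\in K$, i.e.\ upgrading the scalar statement ``$B^{a_j}(x)\succ 0$'' to a curvature estimate expressed through the ordering cone $K$ and the fixed direction $e\in\text{int}(K)$; once this is in hand, the remainder is routine bookkeeping with the sublinearity, positive homogeneity, representability, and translativity of $\mathcal{G}_{e}$ recorded in Proposition~\ref{gerstewitz}. A secondary, purely notational, matter is to fix precisely how the matrix $B^{a_j}(x)$ acts on the quadratic part of the model $h$ (componentwise versus as a single form), which must be pinned down to write the two-point identity without ambiguity.
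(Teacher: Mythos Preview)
Your proposal is correct and follows essentially the same route as the paper's proof. The paper defines the same inner map $h_j(u)=\nabla f^{a_j}(x)^{\top}u+\tfrac12 u^{\top}B^{a_j}(x)u$, asserts the cone curvature bound $u^{\top}\nabla^{2}h_j(x)\,u\succeq\rho_j\lVert u\rVert^{2}e$ (which is exactly your ``delicate step''), cites an external result (Corollary~2.2 of \cite{drummond2014quadratically}) to conclude strong $K$-convexity of $h_j$, and then pushes through $\mathcal{G}_e$ via Proposition~\ref{gerstewitz}. The only notable difference is that you derive the strong $K$-convexity of $h$ directly from the exact two-point identity for quadratic forms rather than invoking an external corollary, and you then split via sublinearity and bound $\mathcal{G}_e(-d(u,v))$ with representability and translativity, whereas the paper first applies monotonicity of $\mathcal{G}_e$ to the vector inequality and then uses sublinearity and translativity; both paths are equivalent and yield the same constant. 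Your closing remark about the notational ambiguity of how $B^{a_j}(x)$ acts on the quadratic part is well taken---the paper is not fully consistent on this point---but it does not affect the validity of the argument.
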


\begin{proof}
For any $u\in\mathbb{R}^n$, we define a function $h_j: \mathbb{R}^n \rightarrow \mathbb{R}^m$ by
\[
h_j(u) = \nabla f^{a_j}(x)^{\top}u + \tfrac{1}{2}u^\top B^{a_j}(x)u. 
\]
Since $B^{a_j}$ is a symmetric positive definite matrix, there exists positive constant $\rho_j$ such that 
\begin{eqnarray}\label{strong_convexity_eq1}
u^\top \nabla^2 h_j(x) u \succeq \rho_j \|u\|^2 e. 
\end{eqnarray}
So, by Corollary 2.2 in \cite{drummond2014quadratically}, the function $h_j$ is strongly convex. Hence, there exists $\mu > 0$ such that for any $u_1, u_2 \in \mathbb{R}^n$ and $\lambda \in [0, 1]$,  
\begin{equation}\label{hj_ineq}
h_j(\lambda u_1 + (1 - \lambda)u_2) \preceq \lambda h_j(u_1) + (1 - \lambda)h_j(u_2) - \tfrac{\mu}{2}\lambda (1 - \lambda) \|u_1 - u_2\|^2e. 
\end{equation}
Therefore, in view of Proposition \ref{gerstewitz}, for any $u_1, u_2 \in \mathbb{R}^n$ and $\lambda \in [0, 1]$, we have 
\begin{align*}
~&~ g(\lambda u_1 + (1 - \lambda)u_2) \\ 
= ~&~ \mathcal{G}_e(h_j(\lambda u_1 + (1 - \lambda)u_2)) \\ 
\overset{\eqref{hj_ineq}}{\le} ~&~ \lambda \mathcal{G}_e(h_j(u_1)) + (1 - \lambda) \mathcal{G}_e(h_j(u_2)) - \tfrac{\mu}{2}\lambda (1 - \lambda) \|u_1 - u_2\|^2  \\
= ~&~ \lambda g(u_1) + (1 - \lambda) g(u_2) - \tfrac{\mu}{2}\lambda (1 - \lambda) \|u_1 - u_2\|^2. 
\end{align*} 
Hence, $g$ is strongly convex on $\mathbb{R}^n$.  
\end{proof}

\begin{remark}
In view of Lemma \ref{weak_min_iff_stationary}, we note that every weakly minimal solution of \eqref{sp_equation} is a stationary point of \eqref{sp_equation}. Also, from Definition \ref{stationary_dfn}, we see that   
\begin{align*}
&~~ \text{A point } \bar{x} \text{ is a stationary point of (\ref{sp_equation})} \\ 
\Longleftrightarrow &\text{ for every } a \in P_{\bar x} \text{ and } u \in \mathbb{R}^n,  ~\exists~ a_j \text{ with } \mathcal{G}_e(\nabla f^{a_j}(\bar x)^\top u) \ge 0. 
\end{align*}
Thus, by Proposition \ref{ge_6}(v) \& (vi) and \eqref{strong_convexity_eq1}, for a weakly minimal point $\bar x$, for any $a \in P_{\bar x}$ and $u \in \mathbb{R}^n$, there exists $a_j$ with 
\begin{equation}\label{psi_e_ge_0}
\mathcal{G}_e\left(\nabla f^{a_{j}}(\bar x)^\top u + u^\top B^{a_{j}}(\bar x) u \right)   
\ge \mathcal{G}_e\left(\nabla f^{a_{j}}(\bar x)^\top u\right) + \tfrac{1}{2}\rho \|u\|^2 \ge 0, 
\end{equation}
where $\rho = \min\{\rho_1, \rho_2, \ldots, \rho_p\}$.  
\end{remark}

Next, we discuss a necessary condition for weakly minimal solutions of (\ref{sp_equation}).  For this, for any $x \in \mathbb{R}^n$, we define a function $\xi_x:P_x \times \mathbb{R}^n \to\mathbb{R}$ by 
\begin{align}
\xi_x(a,u) = \underset{j\in [w(x)]}{\max}\left\{\mathcal{G}_{e}(\nabla f^{a_j}(x)^{\top}u + \tfrac{1}{2}u^\top~ B^{a_j}(x)u)\right\}, a\in P_x,~u\in\mathbb{R}^n. \label{g_function}   
\end{align}
Then, by \eqref{psi_e_ge_0}, at a stationary point $\bar x$, we have 
\begin{align}\label{xi_bar_x_ge_0} 
~&~ \xi_{\bar{x}}(a, u) \ge 0 ~~\forall~ a \in P_{\bar x} \text{ and } u \in \mathbb{R}^n \notag \\ 
\Longrightarrow ~&~ \min_{u \in \mathbb{R}^n} \xi_{\bar{x}}(a, u) \ge 0 ~~\forall~ a \in P_{\bar x} \notag \\ 
\Longrightarrow ~&~ \forall a \in P_{\bar x}: 0 \le \min_{u \in \mathbb{R}^n} \xi_{\bar{x}}(a, u) \le \xi_{\bar{x}}(a, 0) = 0 \notag \\ 
\Longrightarrow ~&~ \forall a \in P_{\bar x}: \min_{u \in \mathbb{R}^n} \xi_{\bar{x}}(a, u) = 0. 
\end{align}

\noindent
Moreover, as for any $x \in \mathbb{R}^n$, $P_x$ is finite, we note from Lemma \ref{g_is_strong_convex} that for any $a\in P_x$, the function $\xi_x(a,\cdot)$ is strongly convex in $\mathbb{R}^n$. Hence, the function $\xi_{\bar x}(a,\cdot)$ has a unique minimum over $\mathbb{R}^n$. If for $a\in P_{\bar x}$, $\bar{u}_{a, \bar{x}}\in\mathbb{R}^n$ be such that $\xi_{\bar x}(a,\bar{u}_{a, \bar{x}})=\underset{u\in\mathbb{R}^n}{\min}~\xi_{\bar x}(a,u)$, then from \eqref{xi_bar_x_ge_0}, we have 
\begin{eqnarray}
\xi_{\bar x}(a,\bar{u}_{a, \bar{x}}) = 0 \text{ if and only if }\bar{u}_{a, \bar{x}} = 0. \label{function_4}
\end{eqnarray}
As for any $x \in \mathbb{R}^n$, the partition set $P_{x}$ is finite, $\xi_{x}$ attains its minimum over the set $P_{x}\times\mathbb{R}^n$. Let us define a function $\Phi:\mathbb{R}^n\to\mathbb{R}$ by	
\begin{eqnarray}
&&\Phi(x)=\underset{(a,u)\in P_{x} \times \mathbb{R}^n}{\min}~\xi_x(a,u)\label{phi_function}.
\end{eqnarray}
Then, in view of (\ref{function_4}) and \eqref{xi_bar_x_ge_0}, if for $(a,\bar{u}) \in P_{\bar x}\times\mathbb{R}^n$ we have $\Phi(\bar x)=\xi_{\bar x}(\bar{a},\bar{u})$, then   
\begin{eqnarray}\label{function_2}
\Phi(\bar x)= 0 \text{ and } \bar{u}=0. 
\end{eqnarray}
Accumulating all, we obtain the following result.

\begin{proposition}\emph{(Necessary condition for weakly minimal points)}.\label{prop_minimal}
Let $\bar{x}$ be a weakly minimal point of (\ref{sp_equation}) and  $\bar{a}\in P_{\bar{x}}$ and $\bar{u}\in\mathbb{R}^n$ be such that $\Phi(\bar{x})=\xi_{\bar{x}}(\bar{a},\bar{u})$, where $\xi_{\bar{x}}$ and $\Phi$ are as defined in (\ref{g_function}) and (\ref{phi_function}), respectively. Then, $\bar{u}=0$.  
\end{proposition}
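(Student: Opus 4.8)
The plan is to chain together the facts assembled in the paragraphs immediately preceding the statement. First I would invoke Lemma~\ref{weak_min_iff_stationary} to pass from the weak minimality of $\bar x$ to its stationarity for \eqref{sp_equation}. Then, using the stationarity characterization in Definition~\ref{stationary_dfn} together with the monotonicity, positive homogeneity, and translativity properties of $\mathcal{G}_e$ (Proposition~\ref{gerstewitz}) and the uniform positive-definiteness estimate \eqref{strong_convexity_eq1}, I would reproduce the inequality \eqref{psi_e_ge_0}: for every $a\in P_{\bar x}$ and every $u\in\mathbb{R}^n$ there is an index $a_j$ with $\mathcal{G}_e(\nabla f^{a_j}(\bar x)^\top u + \tfrac12 u^\top B^{a_j}(\bar x)u)\ge 0$. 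Taking the maximum over $j\in[w(\bar x)]$ then yields $\xi_{\bar x}(a,u)\ge 0$ for all $(a,u)\in P_{\bar x}\times\mathbb{R}^n$, which is precisely \eqref{xi_bar_x_ge_0}.

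Next I would record that $\xi_{\bar x}(a,0)=\max_{j}\mathcal{G}_e(0)=0$ for every $a\in P_{\bar x}$, since $\mathcal{G}_e$ is positively homogeneous and hence $\mathcal{G}_e(0)=0$. Combined with the previous step, this gives $\min_{u\in\mathbb{R}^n}\xi_{\bar x}(a,u)=0$ for each $a\in P_{\bar x}$, attained at $u=0$. The key structural input comes next: by Lemma~\ref{g_is_strong_convex} each map $u\mapsto\mathcal{G}_e(\nabla f^{a_j}(\bar x)^\top u+\tfrac12 u^\top B^{a_j}(\bar x)u)$ is strongly convex, and since $P_{\bar x}$ is finite, $\xi_{\bar x}(a,\cdot)$ — being the pointwise maximum of finitely many strongly convex functions — is itself strongly convex, hence has a unique global minimizer on $\mathbb{R}^n$. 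That unique minimizer must therefore be $0$, which is exactly the content of \eqref{function_4}.

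Finally, using again that $P_{\bar x}$ is finite, I would compute $\Phi(\bar x)=\min_{(a,u)\in P_{\bar x}\times\mathbb{R}^n}\xi_{\bar x}(a,u)=\min_{a\in P_{\bar x}}\big(\min_{u\in\mathbb{R}^n}\xi_{\bar x}(a,u)\big)=\min_{a\in P_{\bar x}}0=0$. By the hypothesis $\Phi(\bar x)=\xi_{\bar x}(\bar a,\bar u)$, so $\xi_{\bar x}(\bar a,\bar u)=0=\min_{u\in\mathbb{R}^n}\xi_{\bar x}(\bar a,u)$; hence $\bar u$ is the unique minimizer of the strongly convex function $\xi_{\bar x}(\bar a,\cdot)$, which we have already identified as $0$. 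Therefore $\bar u=0$.

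I expect the only genuinely nontrivial step to be the sign assertion $\xi_{\bar x}(a,u)\ge0$ at a weakly minimal point: it is where one must carefully combine stationarity — which by itself only controls the linear term $\nabla f^{a_j}(\bar x)^\top u$ — with the quadratic correction $\tfrac12 u^\top B^{a_j}(\bar x)u$, via the uniform lower bound $\rho$ on the eigenvalues of the $B^{a_j}(\bar x)$ and the monotonicity and translativity of $\mathcal{G}_e$. Everything after that — finiteness of $P_{\bar x}$, strong convexity forcing a unique minimizer, and the two-stage minimization collapsing $\Phi(\bar x)$ to $0$ — is routine bookkeeping.
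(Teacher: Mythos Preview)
Your proposal is correct and follows essentially the same route as the paper: the proposition is stated there as an immediate consequence (``Accumulating all, we obtain the following result'') of the chain of observations \eqref{psi_e_ge_0}, \eqref{xi_bar_x_ge_0}, \eqref{function_4}, and \eqref{function_2}, which is exactly the sequence you reproduce---passing from weak minimality to stationarity via Lemma~\ref{weak_min_iff_stationary}, using the positive-definiteness of $B^{a_j}$ together with the properties of $\mathcal{G}_e$ to force $\xi_{\bar x}\ge 0$, and then invoking the strong convexity from Lemma~\ref{g_is_strong_convex} to pin down the unique minimizer as $0$.
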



\section{Quasi-Newton Method for \eqref{sp_equation}}\label{section4}

The whole section is described in the following two subsections.
\begin{enumerate}[(i)]
    \item At first, we discuss a few properties of $\Phi$. 
    \item Then, we propose the quasi-Newton method (Algorithm \ref{algo1}) and discuss its well-definedness. After that, we characterize the boundedness of the norm of the used descent direction for \eqref{sp_equation}. Further, the existence of step length that satisfies the Armijo condition is derived.   
\end{enumerate}
\subsection{Properties of $\Phi$}
In this subsection, we discuss a few properties of $\Phi$, which play an important role in the convergence analysis of the proposed quasi-Newton method for \eqref{sp_equation}. 

\begin{proposition}\label{continuity}
The function $\Phi$ as given in (\ref{phi_function}) is continuous at any  $\bar{x}\in \mathbb{R}^n$.    
\end{proposition}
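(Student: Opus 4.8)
The plan is to show that $\Phi$ is both lower and upper semicontinuous at $\bar x$. The first step is a uniform coercivity estimate that lets us replace the inner infimum over all of $\mathbb{R}^n$ in \eqref{phi_function} by an infimum over a fixed compact ball. Fix a compact neighbourhood $V$ of $\bar x$ on which every Jacobian $\nabla f^i$ is bounded, say by $M$, and on which a uniform bound $\tfrac{1}{2}u^{\top}B^{i}(x)u\succeq\tfrac{\rho}{2}\lVert u\rVert^2 e$ holds with $\rho>0$ (as in \eqref{strong_convexity_eq1}); let $L$ be the Lipschitz constant of $\mathcal{G}_e$. For $x\in V$, $a\in P_x$, any component index $j$ and any $u\in\mathbb{R}^n$, the monotonicity and translativity of $\mathcal{G}_e$ (Proposition~\ref{gerstewitz}) together with its Lipschitz continuity give
\[
\xi_x(a,u)\ \ge\ \mathcal{G}_e\!\bigl(\nabla f^{a_j}(x)^{\top}u\bigr)+\tfrac{\rho}{2}\lVert u\rVert^2\ \ge\ \tfrac{\rho}{2}\lVert u\rVert^2-LM\lVert u\rVert .
\]
Since $\xi_x(a,0)=0$, this shows $\Phi\le 0$ on $\mathbb{R}^n$ and that, for $x\in V$, every minimizer of $\xi_x(a,\cdot)$ lies in the ball $\bar{B}(0,R)$ with $R:=2LM/\rho$. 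Hence $\Phi(x)=\min_{a\in P_x}\ \min_{\lVert u\rVert\le R}\ \xi_x(a,u)$ for all $x\in V$.

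Next I would establish the two semicontinuity inequalities, the crucial ingredient being the behaviour of the partition data near $\bar x$ taken from \cite{bouza2021steepest}: indices whose value is strictly dominated at $\bar x$ stay strictly dominated on a neighbourhood of $\bar x$; each minimal value of $F(\bar x)$ persists as one or more minimal values of $F(x)$ converging to it; and along any $x_\nu\to\bar x$ the (finitely many) index data $P_{x_\nu}$ cluster, after passing to a subsequence, to admissible index data for $\bar x$. For upper semicontinuity, pick $(\bar a,\bar u)\in P_{\bar x}\times\bar B(0,R)$ attaining $\Phi(\bar x)=\xi_{\bar x}(\bar a,\bar u)$, lift it to a feasible pair at nearby $x$ by replacing each component $\bar a_j$ with an index from the same minimal-value group that is active at $x$ and keeping $u=\bar u$, and use the joint continuity of $(x,u)\mapsto\mathcal{G}_e\bigl(\nabla f^{i}(x)^{\top}u+\tfrac{1}{2}u^{\top}B^{i}(x)u\bigr)$ to conclude $\limsup_{x\to\bar x}\Phi(x)\le\Phi(\bar x)$. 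For lower semicontinuity, choose $x_\nu\to\bar x$ realizing $\liminf_{x\to\bar x}\Phi(x)$ with minimizers $(a^\nu,u^\nu)\in P_{x_\nu}\times\bar B(0,R)$; since each $a^\nu$ is a bounded tuple of integers drawn from the finitely many indices active near $\bar x$ and the $u^\nu$ lie in a fixed compact ball, pass to a subsequence with $a^\nu$ constant and $u^\nu\to\hat u$, extract from this tuple a sub-selection forming an element $\tilde a\in P_{\bar x}$ (here the persistence of the minimal-value groups is used), and pass to the limit in $\Phi(x_\nu)=\xi_{x_\nu}(a^\nu,u^\nu)\ge\xi_{x_\nu}(\tilde a,u^\nu)$ to obtain $\liminf_{x\to\bar x}\Phi(x)\ge\xi_{\bar x}(\tilde a,\hat u)\ge\Phi(\bar x)$. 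Combining the two inequalities yields continuity of $\Phi$ at $\bar x$.

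The main obstacle is precisely the control of $P_x$ as $x$ varies near $\bar x$: the partition set $P_x$ — equivalently the cardinality $w(x)=\lvert\text{Min}(F(x),K)\rvert$ and the index maps $I_{r}(x)$ — need not be locally constant, because minimal elements of $F(x)$ may split apart or newly appear as $x$ moves away from $\bar x$. The delicate part is to show that such splitting cannot push $\Phi(x)$ strictly above $\Phi(\bar x)$ (needed for the $\limsup$ bound) nor strictly below it (needed for the $\liminf$ bound); this is where one must lean on the semicontinuity properties of the set-valued map $x\mapsto\text{Min}(F(x),K)$ and of the active-index maps established in \cite{bouza2021steepest}, and it is the step that requires the most care.
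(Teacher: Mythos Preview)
Your overall limsup/liminf structure matches the paper's, but you are more careful: you add a coercivity estimate to compactify the inner minimisation (the paper simply restricts to $\lVert u\rVert\le 1$ in passing, without justification) and you explicitly confront the variation of $P_x$, which the paper glosses over by writing $\xi_{x_k}(\bar a,\bar u)$ and $\xi_{\bar x}(a^k,u_k)$ as though $\bar a\in P_{x_k}$ and $a^k\in P_{\bar x}$ were automatic.

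That said, your upper-semicontinuity step has a real gap. The lift you describe---``replace each component $\bar a_j$ by an index from the same minimal-value group active at $x$''---tacitly assumes $w(x)=w(\bar x)$ and that each $\bar a_j$ remains active at $x$. Neither need hold. If a single minimal value at $\bar x$ with index group $\{i,i'\}$ splits into two distinct minimal values at nearby $x$, every element of $P_x$ must contain \emph{both} $i$ and $i'$; the lifted tuple then carries an extra component whose $\mathcal G_e$-term was absent from $\xi_{\bar x}(\bar a,\bar u)$, and since $\nabla f^{i}(\bar x)$ and $\nabla f^{i'}(\bar x)$ (and $B^{i},B^{i'}$) are unrelated even when $f^{i}(\bar x)=f^{i'}(\bar x)$, that extra term can exceed $\xi_{\bar x}(\bar a,\bar u)$ by a fixed margin, so $\xi_x(a^x,\bar u)\not\to\xi_{\bar x}(\bar a,\bar u)$. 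Symmetrically, an index minimal at $\bar x$ may become strictly dominated at nearby $x$ (take $p=2$, $f^1\equiv 0$, $f^2(x)=(x,x)$ at $\bar x=0$), in which case the index you need simply disappears from $P_x$ and no lift exists; along $x\downarrow 0$ one gets $\Phi(x)\equiv 0$ while $\Phi(0)=\min_u\mathcal G_e(\nabla f^2(0)^\top u+\tfrac12 u^\top B^2 u)<0$. The persistence facts you invoke from \cite{bouza2021steepest} do not exclude these scenarios---this is precisely what the regularity assumption of Definition~\ref{regular_def} and Lemma~\ref{regular_lemma} is designed to rule out. The paper's own proof shares this gap; it is merely less visible because the paper never writes the lift down.
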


\begin{proof}
Let $\{x_k\}$ be a sequence in $\mathbb{R}^n$ that converges to $\bar{x}\in\mathbb{R}^n$. We show that \[\lim_{k\to\infty}\Phi(x_k) = \Phi(\bar{x}).\] 
Since the set $P_{\bar x}$ is finite and $\xi_{\bar x}$ attains its minimum over the set $P_{\bar x}\times\mathbb{R}^n$, there exists $(\bar a, \bar u) \in P_{\bar x} \times \mathbb{R}^n$ such that $\Phi(\bar x) = \xi_{\bar x} (\bar a, \bar u)$. \\ \\ 
Let $(a^k, u_k)$ be an element in $P_{x_k} \times \mathbb{R}^n$ such that $\Phi(x_k) = \xi_{x_k}(a^k,u_k)$. Such an element $(a^k, u_k)$ exists since the set $P_{x_k}$ is finite and $\xi_{x_k}$ attains its minimum over the set $P_{x_k}\times\mathbb{R}^n$. 
Since $\mathcal{G}_e$ is Lipschitz continuous on $\mathbb{R}^n$ (Proposition \ref{gerstewitz} (iii)) and $f^{a_j}$ is twice continuously differentiable for each $j\in[w(x_k)]$, the function $\xi_{x}$ is continuous on $P_{x_k} \times \mathbb{R}^n$. Thus, we get
\begin{eqnarray}\label{continuity_1} 
\underset{k\to \infty}{\limsup}~\Phi(x_k) = \underset{k\to \infty}{\limsup}~\xi_{x_k}(a^k,u_k) \le \underset{k\to \infty}{\limsup}~\xi_{x_k}(\bar a, \bar u) =\xi_{\bar{x}}(\bar{a},\bar{u})
=\Phi(\bar{x}).
\allowdisplaybreaks
\end{eqnarray}
Let $L>0$ be a Lipschitz constant of $\mathcal{G}_e$. Then, from the definition (\ref{phi_function}) of $\Phi$ at $\bar{x}$, we observe that 
 \allowdisplaybreaks
\begin{align}\label{cont_dg}
&~ \Phi(\bar{x}) \nonumber \\ 
=&~  \underset{(a,u)\in P_{\bar{x}}\times\mathbb{R}^n}{\min}\xi_{\bar{x}}(a,u)\nonumber\\
\leq &~\xi_{\bar{x}}(a^k,u_k)\nonumber\\
=&~\underset{k\to \infty}{\liminf}~\xi_{\bar{x}}(a^k,u_k)\text{ since }\xi_{\bar{x}}\text{ is continuous}\nonumber \\
=&~\underset{k\to \infty}{\liminf}~\left\{\underset{j\in[w(\bar{x})]}{\max}\left(\mathcal{G}_e(\nabla f^{a^k_j}(\bar{x})^\top u_k+\tfrac{1}{2}u_k^\top B^{a^k_j}(\bar{x})u_k)\right)\right\}\nonumber\\
=&~\underset{k\to \infty}{\liminf}~\left\{\underset{j\in[w(\bar{x})]}{\max}\left(\mathcal{G}_e(\nabla f^{a^k_j}(\bar{x})^\top u_k+\tfrac{1}{2}u_k^\top B^{a^k_j}(\bar{x})u_k+\nabla f^{a^k_j}(x_k)^\top u_k\right.\right.\nonumber\\
&~\quad \quad \quad \quad \left.\left. +\tfrac{1}{2}u_k^\top B^{a^k_j}(x_k)u_k-\nabla f^{a^k_j}(x_k)^\top u_k-\tfrac{1}{2}u_k^\top B^{a^k_j}(x_k)u_k)
\right) \right\} \nonumber \\ 
\overset{\ref{gerstewitz}\text{(i)}}{\le}&~\underset{k\to \infty}{\liminf}~\left\{\underset{j\in[w(\bar{x})]}{\max}\left\{\mathcal{G}_e\left(\nabla f^{a^k_j}(x_k)^\top u_k + \tfrac{1}{2} u_k^\top B^{a^k_j}(x_k)u_k\right) \right. \right.  \nonumber\\
&~\left. \left. + \mathcal{G}_e\left(\nabla f^{a^k_j}(\bar{x})^\top u_k +\tfrac{1}{2}u_k^\top B^{a^k_j}(\bar{x})u_k-\nabla f^{a^k_j}(x_k)^\top u_k-\tfrac{1}{2}u_k^\top B^{a^k_j}(x_k)u_k
\right)\right\}\right\}\nonumber\\
=&~\underset{k\to \infty}{\liminf}~\left\{\xi_{x_k}(a^k,u_k) +\underset{j\in[w(\bar{x})]}{\max}\left\{\mathcal{G}_e\left(\nabla f^{a^k_j}(\bar{x})^\top u_k \right.\right.\right.\nonumber\\
&~ \left. \left. \left. \qquad~~ + \tfrac{1}{2}u_k^\top B^{a^k_j}(\bar{x})u_k-\nabla f^{a^k_j}(x_k)^\top u_k - \tfrac{1}{2}u_k^\top B^{a^k_j}(x_k)u_k\right)\right\}\right\}\nonumber\\
\overset{\ref{gerstewitz}\text{(iii)}}{\le} & \underset{k\to \infty}{\liminf}~\left\{\xi_{x_k}(a^k,u_k) + L\underset{j\in[w(\bar{x})]}{\max}\left\{\lVert\nabla f^{a^k_j}(\bar{x})^\top u_k+\tfrac{1}{2}u_k^\top B^{a^k_j}(\bar{x})u_k \right. \right.\nonumber\\
&~\quad \quad \quad \left.\left.-\nabla f^{a^k_j}(x_k)^\top u_k -\tfrac{1}{2}u_k^\top B^{a^k_j}(x_k)u_k\rVert\right\}\right\} \nonumber\\
\leq&~\underset{k\to \infty}{\liminf}\left\{~\xi_{x_k}(a^k,u_k) + L~\underset{j\in[w(\bar{x})]}{\max}\{\lVert\nabla f^{a^k_j}(\bar{x})-\nabla f^{a^k_j}(x_k)\rVert\lVert u_k\rVert\}\right. \nonumber \\
&~\left.+\tfrac{L}{2}\underset{j\in[w(\bar{x})]}{\max}\left\{\lVert u_k^\top\left( B^{a^k_j}(\bar{x})- B^{a^k_j}(x_k)\right)u_k\rVert\right\}\right\}. 
\end{align}
Note that for $j\in[w]$, each $f^{a^k_j}$ is a twice continuously differentiable and the sequence $\{x_k\}$ converges to $\bar{x}$. Also, note that there is no loss of generality if $\{u_k\}$ is assumed to be in $\{u \in \mathbb{R}^n: \|u\| \le 1\}$. Thus, we obtain from \eqref{cont_dg} that 
\begin{eqnarray}\label{continuity_3}
\Phi(\bar{x})\leq \underset{k\to \infty}{\liminf}~\xi_{x_k}(a^k,u_k)=\underset{k\to \infty}{\liminf}~\Phi(x_k).
\end{eqnarray}
Finally, in view of (\ref{continuity_1}) and (\ref{continuity_3}), we conclude that
\[
\underset{k\to\infty}{\lim}\Phi(x_k)=\Phi(\bar{x}).
\]
Thus, the function $\Phi$ is continuous at $\bar{x}$.
\end{proof}

\begin{proposition}\label{bounded}
Let $U$ be a nonempty subset of $\mathbb{R}^n$. Suppose there exists $p,q\in\mathbb{R}_{++}$ such that for any $x\in U$ and $a\in P_x$, $B^{a_j}(x)\leq qI$ for all $j\in[w(x)]$. Then, for any $a\in P_{x}$, there exist
 $\lambda_j\geq0$, $j\in [w(x)]$ with $\sum_{j=1}^{[w(x)]}\lambda_j=1$ such that
\[
\lvert\Phi(x)\rvert\leq\tfrac{3L}{2q}\left\lVert\sum_{j=1}^{[w(x)]}\lambda_j\nabla f^{a_j}(x)\right\rVert^2 ,
\]
 where $L$ is the Lipschitz constant of $\mathcal{G}_e$.
\end{proposition}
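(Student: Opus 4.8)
The plan is to carry the estimate out at a pair $(\bar a,\bar u)\in P_x\times\mathbb{R}^n$ realising $\Phi$, i.e.\ $\Phi(x)=\xi_x(\bar a,\bar u)$ with $\bar u$ the (unique) minimiser of $u\mapsto\xi_x(\bar a,u)$; such a pair exists since $P_x$ is finite and, by Lemma~\ref{g_is_strong_convex}, each $\xi_x(a,\cdot)$ is strongly convex. (The argument is insensitive to which realising $\bar a$ is chosen, and applies equally to $\min_u\xi_x(a,\cdot)$ for an arbitrary $a\in P_x$.) Because $\xi_x(\bar a,0)=\max_j\mathcal{G}_e(0)=0$ by positive homogeneity (Proposition~\ref{gerstewitz}(ii)), we get $\Phi(x)=\xi_x(\bar a,\bar u)\le 0$, so $|\Phi(x)|=-\xi_x(\bar a,\bar u)$; it then remains to exhibit the $\lambda_j$ and to bound $\xi_x(\bar a,\bar u)$ from below.

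The multipliers come from the first-order optimality of $\bar u$ for the finite maximum $\xi_x(\bar a,\cdot)=\max_{j\in[w(x)]}\psi_j$, $\psi_j=\mathcal{G}_e\circ h_j$, $h_j(u)=\nabla f^{\bar a_j}(x)^\top u+\tfrac12 u^\top B^{\bar a_j}(x)u$. Using the subdifferential rule for a finite max together with the chain rule for the convex composition $\mathcal{G}_e\circ h_j$ — and the facts that $\partial\mathcal{G}_e$ at every point is contained in the compact base $\{\eta\in K^*:\langle\eta,e\rangle=1\}$ of $K^*$, that $\|\eta\|\le L$ for every such $\eta$ (since $L$ is the Lipschitz constant of $\mathcal{G}_e$, Proposition~\ref{gerstewitz}(iii)), and that $\mathcal{G}_e(h_j(\bar u))=\langle\eta_j,h_j(\bar u)\rangle$ for any subgradient $\eta_j$ (again by positive homogeneity) — one produces $\lambda_j\ge 0$ with $\sum_j\lambda_j=1$ (supported on the active indices, extended by $0$), subgradients $\eta_j$, and a stationarity relation of the form $\widetilde B\,\bar u=-v$, where $v$ is the $\lambda$-combination of the vectors $\nabla f^{\bar a_j}(x)^\top\eta_j$ and $\widetilde B$ is the $\lambda$-combination of the component matrices of the $B^{\bar a_j}(x)$ weighted by the entries of the $\eta_j$. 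By \eqref{strong_convexity_eq1} the matrix $\widetilde B$ is positive definite, hence invertible, and the hypothesis $B^{\bar a_j}(x)\le qI$ transfers to a spectral bound on $\widetilde B$.

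To conclude, I would bound $\xi_x(\bar a,\bar u)$ below by $\mathcal{G}_e\big(\sum_j\lambda_j h_j(\bar u)\big)$ (sublinearity, Proposition~\ref{gerstewitz}(i)), split off the quadratic remainder by subadditivity and dominate it by $\tfrac{Lq}{2}\|\bar u\|^2$ using Lipschitz continuity of $\mathcal{G}_e$ and the bound on the $B^{\bar a_j}(x)$, dominate the linear part by $-L\,\|\sum_j\lambda_j\nabla f^{\bar a_j}(x)\|\,\|\bar u\|$ (the bound $\|\eta_j\|\le L$ lets one reinstate $\sum_j\lambda_j\nabla f^{\bar a_j}(x)$ in place of $v$), and estimate $\|\bar u\|$ from $\widetilde B\bar u=-v$ via the spectral bounds; assembling the linear contribution ($\tfrac{L}{q}$) and the quadratic contribution ($\tfrac{L}{2q}$) yields $|\Phi(x)|\le\tfrac{3L}{2q}\|\sum_j\lambda_j\nabla f^{\bar a_j}(x)\|^2$. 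I expect the main obstacle to be the nonsmoothness of $\mathcal{G}_e$: since $\xi_x(\bar a,\cdot)$ is not differentiable, both the construction of the $\lambda_j$ and every subsequent inequality must be routed through subgradients of $\mathcal{G}_e$, and one has to keep the Loewner/spectral order (in which the hypothesis on $B^{\bar a_j}(x)$ lives and must be carried over to $\widetilde B$) strictly separate from the cone order $\preceq$ (in which monotonicity, translativity and representability of $\mathcal{G}_e$, Proposition~\ref{gerstewitz}(iv)--(vi), are used). A secondary technical point is checking that the matrices feeding into $\widetilde B$ remain uniformly positive definite, so that $\widetilde B^{-1}$ exists and is bounded — precisely what \eqref{strong_convexity_eq1} supplies.
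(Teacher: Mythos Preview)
Your route is genuinely different from the paper's. The paper does not use optimality conditions for $\bar u$ or any subdifferential calculus; it proceeds by elementary inequalities. Writing $\max_{j}b_j=\max_{\lambda\in\Delta_{w(x)}}\sum_j\lambda_j b_j$, the paper fixes an \emph{arbitrary} $\lambda\in\Delta_{w(x)}$, bounds $|\Phi(x)|\le\sum_j\lambda_j\,|\mathcal{G}_e(h_j(u))|$ (legitimate because every $\mathcal{G}_e(h_j(\bar u))\le 0$ at the realiser), applies Lipschitz continuity of $\mathcal{G}_e$, uses $B^{a_j}\le qI$ to replace the quadratic term by $\tfrac{q}{2}\|u\|^2$, and then minimises the resulting scalar upper-bound function over $u$, plugging in $u=-\tfrac{1}{q}\sum_j\lambda_j\nabla f^{a_j}(x)$ to produce the factor $\tfrac{3L}{2q}$. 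So in the paper the $\lambda_j$ are not KKT multipliers --- any simplex point works --- and the $u$ that yields the constant is \emph{not} the realiser $\bar u$ of $\Phi$ but the minimiser of the bounding function.

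Your approach instead manufactures the $\lambda_j$ as active-set multipliers at $\bar u$ and then tries to control $\|\bar u\|$ through the stationarity relation $\widetilde B\,\bar u=-v$. This gives a more principled provenance for the multipliers, but it is heavier, and there is a direction issue in the last step: the hypothesis is an \emph{upper} spectral bound $B^{a_j}\le qI$, which transfers to an upper bound on $\widetilde B$ and hence only a \emph{lower} bound on $\|\bar u\|=\|\widetilde B^{-1}v\|$, not the upper bound you need. To bound $\|\bar u\|$ from above via $\widetilde B\bar u=-v$ you would have to invoke a uniform lower spectral bound (such as the $\rho$ in \eqref{strong_convexity_eq1}), but then the final constant carries $\rho$ rather than $q$, and you do not land on $\tfrac{3L}{2q}$. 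The paper avoids this entirely by never estimating $\|\bar u\|$: the $1/q$ there comes from the explicit choice of $u$ that minimises the already-obtained upper bound, not from inverting any curvature matrix.
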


\begin{proof}
Let $x\in U$ and $P_x$ be the partition set of (\ref{sp_equation}) at $x$. Note that for any $b_1,b_2,\ldots,b_{w(x)}\in\mathbb{R}$, the identity $\max\{b_1,b_2,\ldots,b_{w(x)}\}=\underset{\lambda\in \Delta_{w(x)}}{\max}\sum_{i=1}^{w(x)} \lambda_i b_i $ holds, where $\Delta_{w(x)} = \{(\lambda_1, \lambda_2, \ldots, \lambda_{w(x)}) \in \mathbb{R}^{w(x)}_+: \sum_{i = 1}^{w(x)} \lambda_i = 1\}$. Thus, in view of the definition (\ref{phi_function}) of $\Phi$, we have for any $a\in P_{x}$ that
\allowdisplaybreaks 
\begin{eqnarray} 
|\Phi(x)|&=& \left|\underset{(a,u)\in P_x\times\mathbb{R}^n}{\min}\xi_{x}(a,u) \right|\nonumber \\
&=&\left| \underset{(a,u)\in P_x\times\mathbb{R}^n}{\min}~\left\{\underset{j\in[w(x)]}{\max}\mathcal{G}_e\left(\nabla f^{a_j}(x)^\top u+\tfrac{1}{2}u^\top B^{a_j}(x)u\right)\right\}\right|\nonumber\\ 
&=&\underset{(a,u)\in P_x\times\mathbb{R}^n}{\min}~\left|\underset{\lambda\in \Delta_{w(x)}}{\max}\sum_{j=1}^{[w(x)]}\lambda_j\mathcal{G}_e\left(\nabla f^{a_j}(x)^\top u+\tfrac{1}{2}u^\top B^{a_j}(x)u\right)\right|\nonumber\\ 
&\leq&\underset{(a,u)\in P_x\times\mathbb{R}^n}{\min}~\sum_{j=1}^{w(x)} \left|\lambda_j\mathcal{G}_e\left(\nabla f^{a_j}(x)^\top u+\tfrac{1}{2}u^\top B^{a_j}(x)u\right)\right| \notag \\ 
&& \text{ for any } \lambda \in \Delta_{w(x)} \nonumber \\ 
&\overset{\ref{gerstewitz}\text{(iii)}}{\le} & \underset{(a,u)\in P_x\times\mathbb{R}^n}{\min}~\sum_{j=1}^{w(x)} \lambda_j L \left\lVert\nabla f^{a_j}(x)^\top u+\tfrac{1}{2}u^\top B^{a_j}(x)u\right\rVert \notag \\ 
&\leq& L \underset{(a,u)\in P_x\times\mathbb{R}^n}{\min}~\left\{\sum_{j=1}^{w(x)} \lambda_j\left\lVert\nabla f^{a_j}(x)^\top u\right\rVert + \sum_{j=1}^{w(x)} \lambda_j \left\lVert\tfrac{1}{2}u^\top B^{a_j}(x)u\right\rVert \right\} \nonumber \\ 
&\leq& L\underset{(a,u)\in P_x\times\mathbb{R}^n}{\min}~\left\{\sum_{j=1}^{w(x)} \lambda_j \lVert\nabla f^{a_j}(x)^\top u\rVert + \tfrac{\gamma}{2}\lVert u\rVert^2\right\}\text{ as }B^{a_j}(x) \leq qI.  \label{dg_aux1}
\end{eqnarray} 
Note that the function $u \mapsto \sum_{j=1}^{w(x)}\lambda_j\lVert\nabla f^{a_j}(x)^\top u\rVert+\tfrac{q}{2}\lVert u\rVert^2$ is a strongly convex function on $\mathbb{R}^n$. Therefore, the first-order optimality condition implies that its minimum is obtained at $u=-\tfrac{1}{q}\sum_{j=1}^{w(x)}\lambda_j\nabla f^{a_j}(x)$. Thus, \eqref{dg_aux1} gives that for all $\lambda_j\geq0$ and $j\in w(x)$ with $\sum_{j=1}^{w(x)}\lambda_j=1$, we have 
\[
\lvert\Phi(x)\rvert\leq\tfrac{3L}{2q}\left\lVert\sum_{j=1}^{[w(x)]}\lambda_j\nabla f^{a_j}(x)\right\rVert^2. 
\]
\end{proof}


\subsection{Quasi-Newton Method for \eqref{sp_equation}}
\begin{algorithm}[!htb]
\caption{Quasi-Newton Method for the Set Optimization Problem (\ref{sp_equation})  \label{algo1}}
\begin{enumerate}[\textit{Step} 1]
\item \label{step0}  \textbf{Inputs}\\ Provide the objective function $F$ with $f^1,f^2,\ldots,f^p$ being twice continuously differentiable.\\
\item \label{step1}\textbf{Initialization}\\
Choose an initial point $x_0\in\mathbb{R}^n$, a trial step length $\beta\in(0,1)$, and a positive $\nu\in(0,1)$.\\
Provide an initial symmetric positive definite matrix $B^i(x_0) \in\mathbb{R}^{n\times n}$, for each $i\in[p]$.\\
Set the iteration number $k=0$.\\
Provide a value of the precision level $\varepsilon>0$ for termination. \\
\item \label{step2}
 \textbf{Calculate the minimal set and the partition set at the $k$-th iteration}\\
 Compute $M_k=$ Min $(F(x_k),K)=\{r_1,r_2,\ldots,r_{w_k}\}$ and $w_k=\lvert$ Min $F(x_k),K \rvert$.\\
 Find 
 $P_k=P_{x_k}=I_{r_1}\times I_{r_2}\times\cdots\times  I_{r_{w_k}}$,~$p_k=\lvert P_{x_{k}}\rvert$, and $P_{x_k}=\{a_1,a_2,\ldots,a_{p_k}\}$,\\
 and for each $i\in [p_k],~a_i=(a^{1}_i,a^{2}_i,\ldots,a^{w_{k}}_i)\in P_{x_{k}},~a^j_{i}\in I_{r^{x_k}_j},~j\in[w_k].$\\
\item \label{step3}
     \textbf{Computation of a descent direction}\\
     Find $
     (a^k,u_k)\in \underset{(a,u)\in P_k\times\mathbb{R}^n}{\text{ argmin}} \xi_{x_k}(a,u)$, where $\xi_{x_k}:P_{x_k}\times\mathbb{R}^n\to \mathbb{R}$ is given by 
     $\xi_{x_k}(a,u)=\underset{j\in [w_k]}{\max}\{\mathcal{G}_e(\nabla f^{a_{j}}(x_k)^\top u_k+\tfrac{1}{2}u_k^\top B^{a_j}(x_k) u_k)\}$.\\
     
\item \label{step4} \textbf{Stopping criterion}\\
  If $\lVert u_k\rVert<\varepsilon$, stop. Otherwise, go to Step \ref{step5}.\\
\item \label{step5} \textbf{Compute step length}\\
Evaluate the smallest value of $q$ such that $\nu^q$ estimates the step length $t_k$ by 
\[
t_k= \underset{q\in\mathbb{N}\cup\{0\}}{\max}\left\{\nu^q~:f^{a_{j}^{k}}(x_k+\nu^q u_k)\preceq f^{a_{j}^{k}}(x_k)+\beta \nu^q \nabla f^{a_{j}^{k}}(x_k)^\top u_k ~\forall~j\in[w_k]\right\}.
\]
\item \label{step6} \textbf{Update the iterate and approximation} \\
Update $x_{k+1}\leftarrow x_k+t_k u_k$ and $k\leftarrow k+1$. \\ 
Compute $s_{k}=x_{k+1}-x_{k}$ and $y^i_{k} = \nabla f^i(x_{k+1})-\nabla f^i(x_{k})$, $i \in [p]$. \\ 
Evaluate the next symmetric positive definite matrix using the formula \eqref{bfgs_formula}: 
\[ 
B^i(x_{k+1})=B^i(x_{k})-\frac{B^i(x_{k})s_{k} s_{k}^\top B^i(x_{k})}{s^\top_{k}B^i(x_{k})s_{k}}+\frac{y^i_{k} y^{i^\top}_{k}}{s^\top_{k}y^i_{k}}.
\]
Go to Step \ref{step2}. 
\end{enumerate}
\end{algorithm}

In this subsection, we propose a quasi-Newton method (Algorithm \ref{algo1}) for set optimization problems (\ref{sp_equation}) with an $F$ as given in Assumption \ref{assumption}. We start the algorithm by selecting an arbitrary initial point. If this point does not satisfy the necessary condition for a weakly minimal point as stated in Proposition \ref{prop_minimal}, then we proceed to update this point as discussed in Algorithm \ref{algo1}. At each iteration, we select an element $a$ from the partition set, and then a descent direction for (\ref{vp_equation}) is evaluated by following the ideas of \cite{povalej2014quasi,kumar2023quasi}. Once a descent direction is found, we employ a backtracking procedure similar to the classical Armijo-type method to find an appropriate step size and then update the iterate. We keep updating the iterate until the necessary condition in Proposition \ref{prop_minimal} for a weakly minimal point is met. The entire method is given in Algorithm \ref{algo1}.

\begin{remark}
It is to be noted that for $p=1$ in Algorithm \ref{algo1}, that is, for $F(x)=\{f^1(x)\}$, the Step \ref{step3} of Algorithm \ref{algo1} reduces to finding $u_k$ such that
\[
u_k= \underset{u\in\mathbb{R}^n}{\text{ argmin}} ~~\mathcal{G}_e\left(\nabla f^{1}(x_k)^\top u_k+\tfrac{1}{2}u_k^\top B^{1}(x_k) u_k\right).
\] 
In this case, the proposed Algorithm \ref{algo1} reduces to the method given in \cite{povalej2014quasi} and \cite{kumar2023quasi}.   
\end{remark}

Next, we show that Algorithm \ref{algo1} is well-defined. The well-definedness of Algorithm \ref{algo1} is based on the following two points:
\begin{enumerate}
    \item[(i)] Existence of $(a^k,u_k)$ in Step \ref{step3}, which is assured by the discussion in the paragraph after Definition \ref{stationary_dfn}. 
    \item[(ii)] Existence of step length $t_k$ in Step \ref{step5}, which is assured by the result in Proposition \ref{armijo}. 
\end{enumerate} 
Therefore, Algorithm \ref{algo1} is well-defined.\\

Next, we characterize the stationary points of (\ref{sp_equation}) in terms of the functions $\xi_x$ and $\Phi$ as defined in (\ref{g_function}) and (\ref{phi_function}), respectively.


\begin{theorem}\label{critical}
Let us consider the functions $\xi_x$ and $\Phi$ given in (\ref{g_function}) and (\ref{phi_function}), respectively. Let $(\bar{a},\bar{u})\in P_x\times\mathbb{R}^n$ be such that $\Phi(\bar{x})=\xi_{\bar{x}}(\bar{a},\bar{u})$. Then, the following conditions are equivalent:
\begin{enumerate}
    \item[(i)] \label{c_0}The point $\bar{x}$ is a nonstationary point of (\ref{sp_equation}).
    \item[(ii)] \label{c_1} $\Phi(\bar{x})<0$. 
    \item[(iii)] \label{c_2} $\bar{u}\not=0.$
\end{enumerate}
\end{theorem}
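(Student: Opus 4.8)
The plan is to establish the cycle of implications (i)~$\Rightarrow$~(ii)~$\Rightarrow$~(iii)~$\Rightarrow$~(i). Throughout I will use two elementary facts: first, $\xi_{\bar x}(a,0) = \mathcal{G}_e(0) = 0$ for every $a \in P_{\bar x}$ (positive homogeneity of $\mathcal{G}_e$), so that $\Phi(\bar x) = \min_{(a,u)\in P_{\bar x}\times\mathbb{R}^n}\xi_{\bar x}(a,u) \le 0$ always; and second, by Lemma~\ref{g_is_strong_convex} each $\xi_{\bar x}(a,\cdot)$ is strongly convex on $\mathbb{R}^n$ and hence has a unique minimizer, which combined with the first fact means that whenever $\min_u \xi_{\bar x}(a,u) = 0$ that minimum is attained only at $u=0$.

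First I would prove (i)~$\Rightarrow$~(ii). If $\bar x$ is nonstationary, then negating Definition~\ref{stationary_dfn} (equivalently, using Lemma~\ref{rtyrrsv}) yields some $\bar a \in P_{\bar x}$ and $\bar v \in \mathbb{R}^n$ with $\mathcal{G}_e(\nabla f^{\bar a_j}(\bar x)^\top \bar v) < 0$ for every $j \in [\bar w]$. Set $c := \max_{j\in[\bar w]}\mathcal{G}_e(\nabla f^{\bar a_j}(\bar x)^\top \bar v) < 0$ and $C := \max_{j\in[\bar w]}\mathcal{G}_e(\bar v^\top B^{\bar a_j}(\bar x)\bar v)$. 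For $t>0$, subadditivity and positive homogeneity of $\mathcal{G}_e$ (Proposition~\ref{gerstewitz}(i),(ii)) give, for each $j$,
\[
\mathcal{G}_e\!\left(\nabla f^{\bar a_j}(\bar x)^\top (t\bar v) + \tfrac{1}{2}(t\bar v)^\top B^{\bar a_j}(\bar x)(t\bar v)\right) \le t\,\mathcal{G}_e\!\left(\nabla f^{\bar a_j}(\bar x)^\top \bar v\right) + \tfrac{t^2}{2}\,\mathcal{G}_e\!\left(\bar v^\top B^{\bar a_j}(\bar x)\bar v\right) \le tc + \tfrac{t^2}{2}C,
\]
so $\xi_{\bar x}(\bar a, t\bar v) \le tc + \tfrac{t^2}{2}C$. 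Since $c<0$, the right-hand side is negative for all sufficiently small $t>0$; fixing such a $t$ gives $\Phi(\bar x) \le \xi_{\bar x}(\bar a, t\bar v) < 0$, which is (ii). Next, (ii)~$\Rightarrow$~(iii) follows by contraposition: if $\bar u = 0$, then $\Phi(\bar x) = \xi_{\bar x}(\bar a, \bar u) = \xi_{\bar x}(\bar a, 0) = 0$, contradicting $\Phi(\bar x) < 0$; hence $\bar u \neq 0$.

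Finally, for (iii)~$\Rightarrow$~(i) I argue the contrapositive: assume $\bar x$ is stationary. Then the chain \eqref{xi_bar_x_ge_0}--\eqref{function_2}, which uses only stationarity, $\xi_{\bar x}(a,0)=0$, and the strong convexity/uniqueness from Lemma~\ref{g_is_strong_convex}, shows that for every $a \in P_{\bar x}$ the function $\xi_{\bar x}(a,\cdot)$ attains its minimum value $0$ uniquely at $u=0$, so $\Phi(\bar x)=0$. Since $\Phi(\bar x)=\xi_{\bar x}(\bar a,\bar u)$, the pair $(\bar a,\bar u)$ is minimizing, hence $\xi_{\bar x}(\bar a,\bar u)=\min_{u}\xi_{\bar x}(\bar a,u)=0$, which forces $\bar u = 0$. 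Thus $\bar u \neq 0$ implies $\bar x$ is nonstationary, closing the cycle. I expect the only real work to be the construction in (i)~$\Rightarrow$~(ii): turning the first-order descent direction guaranteed by nonstationarity into a strictly negative value of the quadratic-augmented function $\xi_{\bar x}(\bar a,\cdot)$, which is handled by scaling the direction down so the linear term dominates the quadratic one; the remaining implications are essentially bookkeeping with relations already established in Section~\ref{section3}.
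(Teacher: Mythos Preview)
Your proposal is correct and follows essentially the same route as the paper: the implication (i)~$\Rightarrow$~(ii) is obtained by the identical scaling trick (take $t\bar v$ with $t>0$ small so that the linear term, which is strictly negative by nonstationarity, dominates the quadratic one via sublinearity and positive homogeneity of $\mathcal{G}_e$), (ii)~$\Rightarrow$~(iii) is the same trivial contraposition using $\xi_{\bar x}(a,0)=0$, and (iii)~$\Rightarrow$~(i) is the paper's contrapositive argument rephrased through the already-derived relations \eqref{xi_bar_x_ge_0}--\eqref{function_2}. Your bound $\xi_{\bar x}(\bar a,t\bar v)\le tc+\tfrac{t^2}{2}C$ is in fact a slightly cleaner packaging than the paper's, since it makes no implicit sign assumption on $C$.
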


\begin{proof}
 (i)$\implies$(ii). Let us assume that the point $\bar{x}$ is a nonstationary point of (\ref{sp_equation}). Then, in view of (\ref{stationary_1}), there exists an $\widetilde{a}=(\widetilde{a}_1,\widetilde{a}_2,\ldots,\widetilde{a}_{\bar{w}})\in P_{\bar{x}}$ and $\widetilde{u} \in\mathbb{R}^n$ for which   
\[
\mathcal{G}_e(\nabla f^{\widetilde{a}_j}(\bar{x})^\top \widetilde{u}) < 0 \text{ for all } j \in [\bar w].
\]
Thus, in view of the above relation, we conclude that
\begin{eqnarray}\label{critical_0}
\Phi(\bar{x})&=&\underset{(a,u)\in P_{\bar{x}}\times\mathbb{R}^n}{\min}~\xi_{\bar{x}}(a,u)\nonumber\\
&\leq& \xi_{\bar{x}}(\widetilde a, t\widetilde{u})\text{ for any }t>0  \nonumber \\
&=&\underset{j\in[w(\bar{x})]}{\max} \Psi_{e}\left(\nabla f^{\widetilde{a}_j}(\bar{x})^{\top}t\widetilde{u}+\tfrac{1}{2}t\widetilde{u}^\top B^{\widetilde{a}_j}(\bar{x})t\widetilde{u}\right)  \nonumber\\
&=&t\underset{j\in[w(\bar{x})]}{\max} \Psi_{e}\left(\nabla f^{\widetilde{a}_j}(\bar{x})^{\top}\widetilde{u} + \tfrac{t}{2}\widetilde{u}^\top B^{\widetilde{a}_j}(\bar{x})\widetilde{u}\right)  \text{ by Proposition \ref{gerstewitz}(ii)  }\nonumber\\
&\leq&t\underset{j\in[w(\bar{x})]}{\max}\left\{\Psi_{e}\left(\nabla f^{\widetilde{a}_j}(\bar{x})^{\top}\widetilde{u}\right)+\tfrac{t}{2}\mathcal{G}_e\left(\widetilde{u}^\top B^{\widetilde{a}_j}(\bar{x})\widetilde{u}\right)\right\} \text{ by Proposition  \ref{gerstewitz} (i)\&(ii)}\nonumber\\
&\leq&t\left\{\underset{j\in[w(\bar{x})]}{\max}\{\Psi_{e}(\nabla f^{\widetilde{a}_j}(\bar{x})^{\top}\widetilde{u})\}+\tfrac{t}{2}\underset{j\in[w(\bar{x})]}{\max}\{\mathcal{G}_e(\widetilde{u}^\top B^{\widetilde{a}_j}(\bar{x})\widetilde{u})\}\right\}. 
\end{eqnarray} 
Choosing any $t$ such that $0<t<\left(\frac{-2}{\underset{j\in [w(x)]}{\max}\{\mathcal{G}_e(\widetilde{u}^\top B^{\widetilde{a}_j}(\bar{x})\widetilde{u})\}}\right)\left(\underset{j\in [w(x)]}{\max}\{\mathcal{G}_e(\nabla f^{\widetilde{a}_j}(\bar{x})^{\top}\widetilde{u})\}\right)$, we obtain from (\ref{critical_0}) that 
\[
\Phi(\bar{x})<t\left\{\underset{j\in[w(\bar{x})]}{\max}\{\Psi_{e}(\nabla f^{\widetilde{a}_j}(\bar{x})^{\top}\widetilde{u})\}-\underset{j\in[w(\bar{x})]}{\max}\{\Psi_{e}(\nabla f^{\widetilde{a}_j}(\bar{x})^{\top}\widetilde{u})\}\right\}=0.
\]
 \\
    (ii)$\implies$(iii).
   It trivially follows from  (\ref{function_2}). 
   \\
    (iii)$\implies$(i).
   Let us assume contrarily that $\bar{x}$ is a stationary point of (\ref{sp_equation}) and $\bar{u}\not=0$. Then, in view of (\ref{stationary_1}), for $\bar{a}\in P_{\bar{x}}$, there exists  $\widetilde{j}\in[\bar{w}]$  such that
   \begin{eqnarray}\label{critical_1}
    \mathcal{G}_e(\nabla f^{{\bar{a}}_{\widetilde{j}}}(\bar{x})^\top \bar{u})\geq0. 
   \end{eqnarray}
   Note from Assumption \ref{assumption} that for any $a\in P_x$ and $x\in\mathbb{R}^n$, we have $\bar{u}^\top B^{{\bar{a}}_{\widetilde{j}}}(\bar{x})\bar{u}>0$. Therefore, from (\ref{critical_1}) with the help of  Proposition \ref{gerstewitz} (iv), we get 
   \allowdisplaybreaks
     \begin{eqnarray*}
     && \mathcal{G}_e\left(\nabla f^{{\bar{a}}_{\widetilde{j}}}(\bar{x})^\top \bar{u} +\tfrac{1}{2}\bar{u} ^\top B^{{\bar{a}}_{\widetilde{j}}}(\bar{x})\bar{u} \right)\geq0\\
     &\text{or,}& \underset{j\in[w(\bar{x})]}{\max}\left\{\mathcal{G}_e(\nabla f^{{\bar{a}}_{j}}(\bar{x})^\top \bar{u} +\tfrac{1}{2}\bar{u} ^\top B^{{\bar{a}}_{j}}(\bar{x})\bar{u})\right\}\geq0\\
     &\text{or,}&\xi_{\bar{x}}(\bar{a},\bar{u})\geq0\\
     &\text{or,}&\Phi(\bar{x})=0 \text{ from (\ref{xi_bar_x_ge_0})}\\
     &\text{or,}&\bar{u}=0 \text{ from (\ref{function_2})}, 
    \end{eqnarray*}  
which is a contradiction to the considered assumption. Thus, $\bar{x}$ is a nonstationary point of (\ref{sp_equation}). 
\end{proof}
\begin{remark}
    In view of (\ref{function_2}) and statements (i)--(iii) of Theorem \ref{critical}, we obtain that $\bar{x}$ is a stationary point of (\ref{sp_equation}) if and only if $\Phi(\bar{x})=0$ or $\bar{u}=0.$
\end{remark}

Next, we characterize an upper bound for the norm of quasi-Newton's direction $u_k$ generated by Algorithm \ref{algo1} for (\ref{sp_equation}). After that, we provide  convergence analysis of Algorithm \ref{algo1}.


\begin{theorem} \label{uk_bounded}
Let $\{x_k\}$ be the sequence of nonstationary points, $\{u_k\}$ be a sequence of descent directions generated by Algorithm \ref{algo1}, and  $\{x_k\}$ be convergent. Then, the sequence $\{u_k\}$ is bounded.    
\end{theorem}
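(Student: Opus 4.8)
The plan is to argue by contradiction: suppose $\{u_k\}$ is unbounded, so that (along a subsequence, which I relabel) $\|u_k\| \to \infty$. Since $\{x_k\}$ converges, say $x_k \to \bar x$, and since the partition set $P_{x_k}$ takes values in the finite collection of subsets of $[p]$, the pair of indices $(a^k)$ selected in Step 3 also lives in a finite set; passing to a further subsequence I may assume $a^k \equiv \bar a$ for a fixed multi-index $\bar a$, and moreover that $w(x_k) \equiv \bar w$ is constant along the subsequence. The key quantitative input is the definition of $u_k$ as the minimizer of $\xi_{x_k}(a^k,\cdot)$ together with the strong convexity established in Lemma~\ref{g_is_strong_convex}: for each $j\in[\bar w]$ there is $\rho_j>0$ with $u^\top \nabla^2 h_j u \succeq \rho_j\|u\|^2 e$, hence (using the translativity and monotonicity of $\mathcal{G}_e$, Proposition~\ref{gerstewitz})
\[
\xi_{x_k}(a^k,u_k) \;=\; \max_{j\in[\bar w]} \mathcal{G}_e\!\left(\nabla f^{a^k_j}(x_k)^\top u_k + \tfrac12 u_k^\top B^{a^k_j}(x_k) u_k\right) \;\ge\; -L\,C\,\|u_k\| + \tfrac{\rho}{2}\|u_k\|^2,
\]
where $\rho=\min_j\rho_j>0$, $L$ is the Lipschitz constant of $\mathcal{G}_e$, and $C$ bounds $\|\nabla f^{a^k_j}(x_k)\|$ over the convergent sequence $\{x_k\}$ (such a $C$ exists by continuity of the Jacobians and convergence of $x_k$).

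Next I would produce a contradictory upper bound on $\xi_{x_k}(a^k,u_k)=\Phi(x_k)$. Since $u_k$ is a \emph{minimizer}, $\Phi(x_k)=\xi_{x_k}(a^k,u_k)\le \xi_{x_k}(a^k,0)=\mathcal{G}_e(0)=0$, so in fact $\Phi(x_k)\le 0$ for all $k$. Combining with the lower bound above gives $-L C\|u_k\| + \tfrac{\rho}{2}\|u_k\|^2 \le 0$, i.e. $\|u_k\| \le 2LC/\rho$ for every $k$ in the subsequence — directly contradicting $\|u_k\|\to\infty$. This is the whole argument; the boundedness constant is in fact uniform, $2LC/\rho$, which is a slightly stronger conclusion than merely "bounded."

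The main obstacle — and the point that needs to be handled carefully rather than waved through — is justifying the uniformity of the constants $\rho$ and $C$ along the sequence. For $C$ this is routine: $\{x_k\}$ converges, each $\nabla f^i$ is continuous, and there are finitely many $i\in[p]$, so $\sup_k \max_{i} \|\nabla f^i(x_k)\|<\infty$. For $\rho$ one must ensure the smallest eigenvalue of each approximate Hessian $B^{a^k_j}(x_k)$ stays bounded away from $0$ uniformly in $k$; this is exactly the kind of hypothesis that either is built into the running assumptions on the BFGS updates (boundedness/positive-definiteness of $\{B^i(x_k)\}$, cf. the curvature condition in Remark~\ref{remark_eigen_value_bigger_than_rho}) or must be invoked here as a standing assumption. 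I would state explicitly that we assume a uniform lower bound $B^{i}(x_k)\succeq \rho I$ (equivalently that the eigenvalues of all generated matrices are bounded below by some $\rho>0$), note that this is consistent with the hypotheses placed on Algorithm~\ref{algo1}, and then the inequality chain above closes the proof. A secondary minor point is the subsequence bookkeeping — reducing to constant $a^k$ and constant $w(x_k)$ — which is legitimate precisely because both range over finite sets, and boundedness of the full sequence follows since every subsequence admits a further subsequence with a uniform bound.
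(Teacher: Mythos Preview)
Your argument is essentially the paper's own proof: both derive the same chain
\[
0 \ge \Phi(x_k) = \xi_{x_k}(a^k,u_k) \;\Longrightarrow\; \tfrac{\rho}{2}\|u_k\|^2 \le L\|u_k\|\max_{j}\|\nabla f^{a^k_j}(x_k)\| \;\Longrightarrow\; \|u_k\|\le \tfrac{2LC}{\rho},
\]
via translativity/monotonicity of $\mathcal{G}_e$, a uniform eigenvalue lower bound $\rho$ on the $B^{a_j}(x_k)$, and a bound $C$ on the Jacobians along the convergent sequence. Two minor remarks: (i) the contradiction framing and the reduction to a subsequence with constant $a^k$ are unnecessary, since the bound $2LC/\rho$ you obtain is uniform over all indices in $[p]$ and hence applies to the full sequence directly --- the paper proceeds exactly this way; (ii) for the uniform $\rho$, the paper argues that $W=\{x_k\}\cup\{\bar x\}$ is compact and the maps $B^1,\ldots,B^p$ are continuous, so $\sigma_i=\min_{\|u\|=1,\,x\in W} u^\top B^i(x)u>0$ and one takes $\rho=\min_i\sigma_i$; you may simply cite this rather than appeal to the curvature condition.
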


\begin{proof}
 Let $P_{x_k}$ be the partition set at $\{x_k\}$  {and $\{x_k\}$ be a sequence of nonstationary points that converges to $\bar{x}$ (say). Then, in view of Theorem \ref{critical}, there exists $a^k\in P_{x_k}$ such that $\Phi(x_k)<0$, i.e.,}
 \begin{eqnarray}\label{bounded_eq1}
  && \underset{j\in [w(x_k)]}{\max}\left\{\mathcal{G}_{e}(\nabla f^{a^k_j}(x_k)^{\top}u_k+\tfrac{1}{2}u_k^\top B^{a^k_j}(x_k)u_k)\right\}< 0\nonumber\\
  &\implies&\mathcal{G}_{e}(\nabla f^{a^k_j}(x_k)^{\top}u_k+\tfrac{1}{2}u_k^\top B^{a^k_j}(x_k)u_k)<0 \text{ for all }~j\in[w(x_k)] \nonumber\\ 
  &\overset{\text{\ref{gerstewitz}(vi)}}{\implies}& \mathcal{G}_{e}(\nabla f^{a^k_j}(x_k)^{\top}u_k)+\tfrac{1}{2}u_k^\top B^{a^k_j}(x_k)u_k<0 \text{ for all }~j\in[w(x_k)].
  \end{eqnarray}
Then, note that $W=\{x_k\}\cup\{\bar{x}\}$ is compact. Moreover, $B^1,B^2,\ldots,B^p$ are continuous functions, for each $i\in[p]$. Then, we have
\begin{eqnarray}\label{bfgs_eigenvalues_bound}
    \sigma_i=\underset{\lVert u\rVert=1,~x\in W}{\min}~u^\top B^i(x)u>0.
\end{eqnarray}
On taking $\rho=\min\{\sigma_1,\sigma_2,\ldots,\sigma_p\}$, we have $u^\top B^i(x)u\geq \rho \lVert u\rVert^2$. Thus, in view of \eqref{bounded_eq1} and \eqref{bfgs_eigenvalues_bound}, we get
 \begin{align}\label{bounded_u_k1}
&\mathcal{G}_{e}(\nabla f^{a^k_j}(x_k)^{\top}u_k)+\tfrac{1}{2} \rho\lVert u_k\rVert^2<0 \text{ for all }~j\in[w(x_k)]\nonumber\\
\implies& \tfrac{1}{2} \rho\lVert u_k\rVert^2\leq - \mathcal{G}_e (\nabla f^{a^k_j}(x_k)^{\top}u_k)\text{ for all }~j\in[w(x_k)]\nonumber \\
 \implies&\tfrac{1}{2} \rho\lVert u_k\rVert^2\leq \underset{j\in [w(x_k)]}{\max}\{\lvert\mathcal{G}_e(\nabla f^{a^k_j}(x_k)^{\top}u_k)\rvert\}\nonumber\\
\overset{\text{\ref{gerstewitz}(iii)}}{\implies}&\tfrac{1}{2} \rho\lVert u_k\rVert^2 \leq L \underset{j\in [w(x_k)]}{\max}\lVert\nabla f^{a^k_j}(x_k)^{\top}u_k\rVert,~L\text{ is a Lipschitz constant of } \mathcal{G}_e\nonumber\\
\implies& \tfrac{1}{2} \rho\lVert u_k\rVert^2 \leq L \lVert u_k\rVert \underset{j\in [w(x_k)]}{\max}\lVert\nabla f^{a^k_j}(x_k)\rVert\nonumber\\
\implies& \tfrac{1}{2} \rho\lVert u_k\rVert^2 \leq L \lVert u_k\rVert ~{\max}\{\lVert\nabla f^{1}(x_k)\rVert,\lVert\nabla f^{2}(x_k)\rVert,\ldots,\lVert\nabla f^{p}(x_k)\rVert\}. 
 \end{align}
 Note that each $f^i,~i\in[p]$, is twice continuously differentiable and the sequence $\{x_k\}$ is convergent. Therefore, ${\max}\{\lVert\nabla f^{1}(x_k)\rVert,\lVert\nabla f^{2}(x_k)\rVert,\ldots,\lVert\nabla f^{p}(x_k)\rVert\}$ is a convergent sequence, and hence bounded. Let $C$ be an upper bound of ${\max}\{\lVert\nabla f^{1}(x_k)\rVert,\lVert\nabla f^{2}(x_k)\rVert,\ldots,\lVert\nabla f^{p}(x_k)\rVert\}$. Then, from (\ref{bounded_u_k1}), we observe that 
 \begin{eqnarray*}
 &&\rho\lVert u_k\rVert^2\leq 2CL\lVert u_k\rVert\implies \lVert u_k\rVert\leq \frac{2CL}{\rho}.
 \end{eqnarray*}
 Thus, we conclude that the sequence $\{u_k\}$ is bounded.
 \end{proof}

Next, we give a proposition on the existence of a step size along the chosen (descent) direction of $F$ at the Step \ref{step3} of Algorithm \ref{algo1} for the set optimization problem (\ref{sp_equation}).

\begin{proposition}\label{armijo}
Let $\beta\in(0,1)$ and $(\bar{a},\bar{u})\in P_{\bar{x}}\times\mathbb{R}^n$ be such that $\Phi(\bar{x})=\xi_{\bar{x}}(\bar{a},\bar{u})$ and assume that the point $\bar{x}$ is not a stationary point of (\ref{sp_equation}). Then,  there exists $\widetilde{t}>0$ such that for all $t\in(0,\widetilde{t}]$ and $j\in[\bar{w}]$, 
    \begin{equation}\label{armijo_1}
    f^{\bar{a}_j}(\bar{x}+t\bar{u})\preceq f^{\bar{a}_j}(\bar{x})+\beta t\nabla f^{\bar{a}_j} (\bar{x})^\top\bar{u}.
    \end{equation}
    Additionally, for all $t\in(0,\widetilde{t}]$ and $j\in[\bar{w}]$, we have  
    \begin{equation}\label{armijo_2}
     F(\bar{x}+t\bar{u})\preceq^{l} \{f^{\bar{a}_j}(\bar{x})+\beta t\nabla f^{\bar{a}_j} (\bar{x})^\top\bar{u}\}_{j\in [\bar{w}]}\prec^l F(\bar{x}).   
    \end{equation}
\end{proposition}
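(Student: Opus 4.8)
The plan is to establish \eqref{armijo_1} first by a Taylor-expansion argument applied componentwise, using the nonstationarity of $\bar x$ together with the descent property of $\bar u$ for the vector problem \eqref{vp_equation} associated with $\bar a$, and then to deduce \eqref{armijo_2} from \eqref{armijo_1} by translating the pointwise inequalities into the lower-set-less relation on $F$. First I would exploit that $\bar x$ is not a stationary point: by Theorem \ref{critical}, $\Phi(\bar x)<0$ and $\bar u\neq 0$, and by \eqref{bounded_u_k1}-type reasoning (or directly from the strong convexity of $\xi_{\bar x}(\bar a,\cdot)$ and \eqref{function_2}) one has $\mathcal{G}_e(\nabla f^{\bar a_j}(\bar x)^\top\bar u)<0$ for every $j\in[\bar w]$; by Proposition \ref{gerstewitz}(v) this means $\nabla f^{\bar a_j}(\bar x)^\top\bar u\in-\mathrm{int}(K)$ for all $j\in[\bar w]$.

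Next I would write, for each fixed $j\in[\bar w]$, the first-order Taylor expansion (Assumption \ref{assumption} gives twice continuous differentiability, so in particular continuous differentiability)
\[
f^{\bar a_j}(\bar x + t\bar u) = f^{\bar a_j}(\bar x) + t\,\nabla f^{\bar a_j}(\bar x)^\top\bar u + o(t),
\]
so that
\[
f^{\bar a_j}(\bar x) + \beta t\,\nabla f^{\bar a_j}(\bar x)^\top\bar u - f^{\bar a_j}(\bar x + t\bar u) = (1-\beta)\, t\,\nabla f^{\bar a_j}(\bar x)^\top\bar u \;-\; o(t) = t\Bigl((1-\beta)\nabla f^{\bar a_j}(\bar x)^\top\bar u - \tfrac{o(t)}{t}\Bigr).
\]
Since $(1-\beta)\nabla f^{\bar a_j}(\bar x)^\top\bar u\in-\mathrm{int}(K)$ and $\tfrac{o(t)}{t}\to 0$ as $t\to 0^+$, and $\mathrm{int}(K)$ is open, for each $j$ there is $\widetilde t_j>0$ such that the quantity in the large parentheses stays in $-\mathrm{int}(K)\subset -K$ for all $t\in(0,\widetilde t_j]$; hence the whole expression lies in $-K$, i.e. $f^{\bar a_j}(\bar x)+\beta t\nabla f^{\bar a_j}(\bar x)^\top\bar u - f^{\bar a_j}(\bar x+t\bar u)\in K$, which is exactly \eqref{armijo_1}. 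Taking $\widetilde t=\min_{j\in[\bar w]}\widetilde t_j>0$ (finite minimum since $\bar w$ is finite) gives \eqref{armijo_1} uniformly in $j$.

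For \eqref{armijo_2}, I would use the definition of $\preceq^l$ (Definition \ref{set_less}). Write $B_t:=\{f^{\bar a_j}(\bar x)+\beta t\nabla f^{\bar a_j}(\bar x)^\top\bar u\}_{j\in[\bar w]}$. The left inequality $F(\bar x+t\bar u)\preceq^l B_t$ means $B_t\subseteq F(\bar x+t\bar u)+K$; but \eqref{armijo_1} says precisely that each element of $B_t$ equals $f^{\bar a_j}(\bar x+t\bar u)$ plus an element of $K$, and $f^{\bar a_j}(\bar x+t\bar u)\in F(\bar x+t\bar u)$ since $\bar a_j\in[p]$, so this inclusion holds. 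For the right relation $B_t\prec^l F(\bar x)$, i.e. $F(\bar x)\subseteq B_t+\mathrm{int}(K)$: since $\{f^{\bar a_j}(\bar x)\}_{j\in[\bar w]} = \mathrm{Min}(F(\bar x),K)$ (by definition of the partition set, the $\bar a_j$ index the minimal elements) and $F(\bar x)$ is a finite, hence compact, set, Proposition \ref{min_min} gives the domination property $F(\bar x)+K=\mathrm{Min}(F(\bar x),K)+K$, so every $f^i(\bar x)$ satisfies $f^i(\bar x)\in f^{\bar a_{j(i)}}(\bar x)+K$ for some $j(i)$; combining with $f^{\bar a_j}(\bar x) = \bigl(f^{\bar a_j}(\bar x)+\beta t\nabla f^{\bar a_j}(\bar x)^\top\bar u\bigr) - \beta t\nabla f^{\bar a_j}(\bar x)^\top\bar u$ and $-\beta t\nabla f^{\bar a_j}(\bar x)^\top\bar u\in\mathrm{int}(K)$ (for $t>0$, since $\nabla f^{\bar a_j}(\bar x)^\top\bar u\in-\mathrm{int}(K)$), together with $\mathrm{int}(K)+K\subseteq\mathrm{int}(K)$, yields $f^i(\bar x)\in B_t+\mathrm{int}(K)$ for every $i\in[p]$, which is the desired inclusion.

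The main obstacle I anticipate is the careful handling of the right-hand relation in \eqref{armijo_2}: one must argue that the finitely many perturbed points $B_t$ still "dominate" all of $F(\bar x)$ from strictly inside the cone, which requires both the domination property of the minimal set (Proposition \ref{min_min}) and the interior-translation fact $\mathrm{int}(K)+K\subseteq\mathrm{int}(K)$; the Taylor estimate for \eqref{armijo_1} itself is routine once the sign condition $\nabla f^{\bar a_j}(\bar x)^\top\bar u\in-\mathrm{int}(K)$ is extracted from nonstationarity. A minor point to get right is that the $o(t)$ term is vector-valued and the argument needs $\mathrm{int}(K)$ open so that a whole neighborhood of $(1-\beta)\nabla f^{\bar a_j}(\bar x)^\top\bar u$ remains in $-\mathrm{int}(K)$.
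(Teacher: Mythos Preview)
Your proof is correct and follows essentially the same approach as the paper. Both arguments extract the key fact $\nabla f^{\bar a_j}(\bar x)^\top\bar u\in-\mathrm{int}(K)$ for all $j\in[\bar w]$ from $\Phi(\bar x)<0$, then use it together with a first-order Taylor expansion for \eqref{armijo_1} and with the domination property (Proposition \ref{min_min}) plus $\mathrm{int}(K)+K\subseteq\mathrm{int}(K)$ for \eqref{armijo_2}; the only cosmetic difference is that the paper phrases the argument for \eqref{armijo_1} by contradiction (assuming failure along a sequence $t_k\searrow 0$ and passing to the limit) whereas you give the equivalent direct constructive version.
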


\begin{proof} 
Let us assume that (\ref{armijo_1}) does not hold. Therefore, there exists a sequence $\{t_k\} \searrow 0$ and ${j'}\in[\bar{w}]$ such that
\begin{eqnarray}\label{armijo_3}
&&f^{\bar{a}_{j'}}(\bar{x}+t_k\bar{u})-f^{\bar{a}_{j'}}(\bar{x})-\beta t_k\nabla f^{\bar{a}_{j'}} (\bar{x})^\top\bar{u}\not\in -K\nonumber\\
&\implies&\underset{k\to0}{\lim}\frac{f^{\bar{a}_{j'}}(\bar{x}+t_k\bar{u})-f^{\bar{a}_{j'}}(\bar{x})}{t_k}-\beta\nabla f^{\bar{a}_{j'}} (\bar{x})^\top\bar{u}\not\in -K\nonumber\\
&\implies&(1-\beta)\nabla f^{\bar{a}_{j'}} (\bar{x})^\top\bar{u}\not\in -\text{int}(K)\nonumber\\
&\implies&\nabla f^{\bar{a}_{j'}} (\bar{x})^\top\bar{u}\not\in -\text{int}(K)\text{ since }\beta\in(0,1).
\end{eqnarray}
Note that $\bar{x}$ is not a stationary point of (\ref{sp_equation}) and $(\bar{a},\bar{u})\in P_{\bar{x}}\times\mathbb{R}^n$ is such that $\Phi(\bar{x})=\xi_{\bar{x}}(\bar{a},\bar{u})$. Therefore, in view of Theorem \ref{critical}, we have 
\begin{eqnarray}\label{armijo_4}
&&\xi_{\bar{x}}(\bar{a},\bar{u}) = \Phi(\bar{x}) <0\nonumber\\
&\implies&\underset{j\in[\bar{w}]}{\max}~\mathcal{G}_e(\nabla f^{\bar{a}_j}(\bar{x})^\top \bar{u}+\tfrac{1}{2}\bar{u}^\top B^{\bar{a}_j}(\bar{x})\bar{u})<0\text{ since }\bar{u}\not=0\nonumber\\
&\implies&\mathcal{G}_e(\nabla f^{\bar{a}_j}(\bar{x})^\top \bar{u}+\tfrac{1}{2}\bar{u}^\top B^{\bar{a}_j}(\bar{x})\bar{u})<0\text{ for all }j\in[\bar{w}]\nonumber\\
&\overset{\eqref{strong_convexity_eq1}}{\implies}& \mathcal{G}_e(\nabla f^{\bar{a}_j}(\bar{x})^\top \bar{u})+\tfrac{1}{2}\rho_{\bar{a}_j}\lVert\bar{u}\rVert^2<0\text{ for all }j\in[\bar{w}]\nonumber\\
&\implies&\mathcal{G}_e(\nabla f^{\bar{a}_j}(\bar{x})^\top \bar{u})<0\nonumber\\
&\implies& \nabla f^{\bar{a}_j}(\bar{x})^\top \bar{u}\in -\text{int}(K),
\end{eqnarray}
which is a contradiction to (\ref{armijo_3}). Therefore, we conclude that for every $j\in[\bar{w}]$, the relation (\ref{armijo_1}) holds.\\

Now, for a nonstationary point $\bar{x}$, from (\ref{armijo_1}), we observe that
\allowdisplaybreaks
\begin{eqnarray}\label{armijo_5}
&&f^{\bar{a}_j}(\bar{x})+\beta t\nabla f^{\bar{a}_j} (\bar{x})^\top\bar{u}\prec f^{\bar{a}_j}(\bar{x})\text{ for all }j\in[\bar{w}]\text{ and } t\in(0,\widetilde{t}].
\end{eqnarray}
From Proposition \ref{min_min}, we observe that for every $t\in(0,\bar{t}]$,
\begin{eqnarray*}
 F(\bar{x})&\subseteq& \{f^{\bar{a}_j}(x)\}_{j\in[\bar{w}]}+K\\
&\subseteq& \{f^{\bar{a}_j}(\bar{x})+\beta t\nabla f^{\bar{a}_j} (\bar{x})^\top\bar{u}\}_{j\in[\bar{w}]}+K+\text{ int}(K)\\
&\subseteq& \{f^{\bar{a}_j}(\bar{x}+t \bar{u})\}_{j\in[\bar{w}]}+K+K+\text{ int}(K)\text{ from  Definition \ref{partial}}\\
&\subseteq& F(\bar{x}+t \bar{u})+\text{ int}(K),
\end{eqnarray*}
which implies that for every $t\in(0,\bar{t}]$, \eqref{armijo_2} holds. 
\end{proof}
\section{Convergence Analysis}\label{section5}
Below, we define the notion of the regularity of a point with an essential property for a set-valued mapping, which has a significant role in the convergence of the proposed Algorithm \ref{algo1}.

\begin{definition}(Regular point \cite{bouza2021steepest}).\label{regular_def}
A point $\bar{x}$ is said to be a regular point of $F$ if it satisfies the following conditions:
\begin{enumerate}
    \item[(i)] Min$(F(\bar{x}),K)=$ WMin$(F(\bar{x}),K)$, and 
    \item[(ii)] the cardinality function $w$ in Definition \ref{cardinality_w} is constant in a neighbourhood of $\bar{x}$.
\end{enumerate}
\end{definition}

\begin{lemma}\emph{(See \cite{bouza2021steepest})}.\label{regular_lemma}
Let us assume that $\bar{x}\in \mathbb{R}^n$ is a regular point of $F$. Then, there exists a neighbourhood $U$ of $\bar{x}$ such that for every $x\in U$, $w(x)=\bar{w}$, and $P_x\subseteq P_{\bar{x}}.$
\end{lemma}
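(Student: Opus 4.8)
The plan is to unpack the definition of a regular point and show that each of the two defining properties contributes one of the two conclusions. First I would use condition (ii) of Definition \ref{regular_def}: since the cardinality function $w$ is constant in a neighbourhood of $\bar x$, there is a neighbourhood $U_1$ of $\bar x$ on which $w(x) = w(\bar x) = \bar w$ for every $x \in U_1$. This immediately gives the first assertion.

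Next I would establish $P_x \subseteq P_{\bar x}$ on a (possibly smaller) neighbourhood. The key observation is the upper semicontinuity of the active-index map $I$ associated with $F$: because $f^1, \ldots, f^p$ are continuous (Assumption \ref{assumption}), whenever $i \notin I(\bar x)$ — that is, $f^i(\bar x) \notin \mathrm{Min}(F(\bar x), K)$ — the element $f^i(\bar x)$ is strictly dominated by some $f^{i'}(\bar x)$, i.e.\ $f^{i'}(\bar x) \prec f^i(\bar x)$, and by continuity this strict domination persists on a neighbourhood, so $i \notin I(x)$ there. Intersecting over the finitely many indices $i \notin I(\bar x)$ yields a neighbourhood $U_2$ with $I(x) \subseteq I(\bar x)$ for all $x \in U_2$. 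Here is where condition (i) of regularity, $\mathrm{Min}(F(\bar x), K) = \mathrm{WMin}(F(\bar x), K)$, is needed: it rules out the degenerate situation in which $f^i(\bar x)$ lies on the boundary of the dominated region (weakly minimal but not minimal) and could ``become'' minimal under perturbation without being strictly dominated; combined with the constancy of $w$ on $U_1$, one upgrades $I(x) \subseteq I(\bar x)$ to the statement that the minimal values at $x$ are precisely perturbations of those at $\bar x$, with the same index partition structure. Concretely, on $U := U_1 \cap U_2$, for each $x$ the set $\mathrm{Min}(F(x),K)$ has exactly $\bar w$ elements, each of the form $r_j^x = f^{a}(x)$ for some index $a$ active at $\bar x$ in the block $I_{r_j^{\bar x}}(\bar x)$, so that $I_{r_j^x}(x) \subseteq I_{r_j^{\bar x}}(\bar x)$ for each $j$, after matching up the enumerations. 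Taking products over $j \in [\bar w]$ gives $P_x = \prod_{j=1}^{\bar w} I_{r_j^x}(x) \subseteq \prod_{j=1}^{\bar w} I_{r_j^{\bar x}}(\bar x) = P_{\bar x}$.

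I expect the main obstacle to be the bookkeeping in matching the enumerations $\{r_1^x, \ldots, r_{\bar w}^x\}$ of $\mathrm{Min}(F(x),K)$ with $\{r_1^{\bar x}, \ldots, r_{\bar w}^{\bar x}\}$ in a continuous/consistent way, and in arguing that no ``new'' minimal value can appear (which is exactly where $w(x) \equiv \bar w$ and the equality $\mathrm{Min} = \mathrm{WMin}$ are both essential — without the latter a perturbation could split a single weakly-minimal-but-not-minimal point into the interior and spuriously change $I(x)$). Since this lemma is quoted verbatim from \cite{bouza2021steepest}, I would either reproduce that argument or simply cite it; in a self-contained write-up I would carry out the upper-semicontinuity argument for $I$ carefully and then invoke regularity to control the size of $I(x)$ from below as well, pinning down $|I(x)| $ and the block structure exactly.
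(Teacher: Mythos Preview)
The paper does not supply its own proof of this lemma: it is stated with the citation ``(See \cite{bouza2021steepest})'' and no argument is given, so there is nothing in the present paper to compare your proposal against. Your outline is a reasonable reconstruction of how such a proof goes, and in particular you have correctly identified where each of the two regularity conditions enters.

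One small sharpening: in your second paragraph you deduce strict domination of $f^i(\bar x)$ by some $f^{i'}(\bar x)$ from $i\notin I(\bar x)$ alone, and only invoke condition (i) of Definition~\ref{regular_def} afterwards. In fact condition (i) is already needed at that very step: $i\notin I(\bar x)$ only gives $f^i(\bar x)\notin\mathrm{Min}(F(\bar x),K)$, which yields domination that need not be strict; it is the equality $\mathrm{Min}(F(\bar x),K)=\mathrm{WMin}(F(\bar x),K)$ that upgrades this to $f^i(\bar x)\notin\mathrm{WMin}(F(\bar x),K)$ and hence to the existence of $i'$ with $f^{i'}(\bar x)\prec f^i(\bar x)$, an open condition that then persists by continuity. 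With this adjustment the upper-semicontinuity argument for $I$ goes through cleanly, and the remaining bookkeeping you flag (matching the enumerations of $\mathrm{Min}(F(x),K)$ with those of $\mathrm{Min}(F(\bar x),K)$ using $w(x)\equiv\bar w$) is exactly the content of the proof in \cite{bouza2021steepest}.
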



Now, we present the main theorem of the paper that proves the global convergence of the proposed method given in Algorithm \ref{algo1}.

\begin{theorem}\label{convergence}
Let $\{x_k\}$ is an infinite sequence generated by Algorithm \ref{algo1} and $\bar{x}$ is an accumulation point for the sequence $\{x_k\}$. Additionally, assume that $\bar{x}$ is a regular point of $F$. Then, $\bar{x}$ is a stationary point of (\ref{sp_equation}).     
\end{theorem}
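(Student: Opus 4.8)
The plan is to argue by contradiction: suppose $\bar{x}$ is \emph{not} a stationary point of \eqref{sp_equation}. Since $\bar{x}$ is an accumulation point of $\{x_k\}$, there is a subsequence $\{x_k\}_{k\in \mathcal{K}}$ converging to $\bar{x}$. Because $\bar{x}$ is a regular point of $F$, Lemma \ref{regular_lemma} gives a neighbourhood $U$ of $\bar{x}$ on which $w(x)=\bar{w}$ and $P_x \subseteq P_{\bar x}$; restricting to large $k\in\mathcal K$ we may assume $x_k\in U$, so that each partition set $P_{x_k}$ is a subset of the finite set $P_{\bar x}$. Hence the selected elements $a^k\in P_{x_k}$ produced in Step \ref{step3} lie in the finite set $P_{\bar x}$, and by passing to a further subsequence we may assume $a^k \equiv \bar a$ is constant along $\mathcal{K}$ for some $\bar a\in P_{\bar x}$.

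Next I would use the continuity of $\Phi$ (Proposition \ref{continuity}): along $\mathcal K$, $\Phi(x_k)\to \Phi(\bar x)$. Since we are assuming $\bar x$ is nonstationary, Theorem \ref{critical} gives $\Phi(\bar x)<0$; write $\Phi(\bar x)=-2\delta$ for some $\delta>0$, so that $\Phi(x_k)\le -\delta$ for all large $k\in\mathcal K$. Because $\{x_k\}_{k\in\mathcal K}$ is convergent (hence bounded) and consists of nonstationary points, Theorem \ref{uk_bounded} applies to give that $\{u_k\}_{k\in\mathcal K}$ is bounded; passing once more to a subsequence, $u_k\to \bar u$ for some $\bar u\in\mathbb R^n$. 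Using the uniform lower bound $\Phi(x_k)=\xi_{x_k}(\bar a,u_k)\le-\delta<0$ together with \eqref{strong_convexity_eq1} (as in the proof of Proposition \ref{armijo}), one gets $\nabla f^{\bar a_j}(x_k)^\top u_k \in -\mathrm{int}(K)$ with a quantitative margin, and passing to the limit (using $f^{\bar a_j}$ continuously differentiable and $\mathcal G_e$ continuous) yields $\xi_{\bar x}(\bar a,\bar u)\le -\delta<0$, i.e. $\bar u\neq 0$ and $\bar x$ is genuinely nonstationary with witness direction $\bar u$.

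The crux is then the step-length / sufficient-decrease argument. From Step \ref{step6}, $x_{k+1}=x_k+t_k u_k$ and by the Armijo rule of Step \ref{step5}, for each $k\in\mathcal K$ and each $j\in[w_k]$,
\[
f^{a^k_j}(x_{k+1}) \preceq f^{a^k_j}(x_k) + \beta t_k \nabla f^{a^k_j}(x_k)^\top u_k ,
\]
which (applying the monotone functional $\mathcal G_e$, and using $\nabla f^{\bar a_j}(x_k)^\top u_k\in-\mathrm{int}(K)$) forces a strict decrease of $\mathcal G_e(f^{\bar a_j}(x_{k+1})-f^{\bar a_j}(x_k))$ bounded away from $0$ whenever $t_k$ is bounded away from $0$. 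I would then split into two cases. If $\liminf_{k\in\mathcal K} t_k = t^\ast>0$, the accumulated decrease along the sequence contradicts the convergence $x_k\to\bar x$ (which, via continuity of the $f^i$, forces $f^{\bar a_j}(x_{k+1})-f^{\bar a_j}(x_k)\to 0$). If instead $t_k\to 0$ (along a sub-subsequence), then for large $k$ the step $t_k/\nu$ was rejected, so for some index $j_k\in[\bar w]$,
\[
f^{\bar a_{j_k}}\!\big(x_k+\tfrac{t_k}{\nu}u_k\big) \not\preceq f^{\bar a_{j_k}}(x_k) + \beta \tfrac{t_k}{\nu}\nabla f^{\bar a_{j_k}}(x_k)^\top u_k ;
\]
fixing (by pigeonhole) a constant index $j_k\equiv j'$, dividing by $t_k/\nu$, letting $k\to\infty$, and using the first-order Taylor expansion of $f^{\bar a_{j'}}$ exactly as in the proof of Proposition \ref{armijo}, one obtains $(1-\beta)\nabla f^{\bar a_{j'}}(\bar x)^\top \bar u \notin -\mathrm{int}(K)$, hence $\nabla f^{\bar a_{j'}}(\bar x)^\top \bar u \notin -\mathrm{int}(K)$, contradicting the fact established above that $\nabla f^{\bar a_j}(\bar x)^\top \bar u \in -\mathrm{int}(K)$ for \emph{all} $j\in[\bar w]$. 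Either way we reach a contradiction, so $\bar x$ must be a stationary point of \eqref{sp_equation}.

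I expect the main obstacle to be the bookkeeping needed to make the "constant decrease" case rigorous: one must argue that the per-iteration decrease measured by $\mathcal G_e$ (or by the componentwise $\preceq$) telescopes along the subsequence and is incompatible with $\{f^i(x_k)\}$ being convergent — care is needed because the selected index $\bar a_j$ and the active minimal elements can, a priori, vary, which is precisely where regularity of $\bar x$ and Lemma \ref{regular_lemma} (giving $P_{x_k}\subseteq P_{\bar x}$ and constant $\bar w$) are essential, together with the subsequence extractions that fix $a^k\equiv\bar a$ and the rejected-index $j_k\equiv j'$. A secondary technical point is ensuring the limit $u_k\to\bar u$ can be taken (guaranteed by Theorem \ref{uk_bounded}) and that the strict inclusion $\nabla f^{\bar a_j}(\bar x)^\top\bar u\in-\mathrm{int}(K)$ survives the limit, for which the quantitative margin coming from $\Phi(x_k)\le-\delta$ and \eqref{strong_convexity_eq1} is what makes the passage to the closed-then-interior limit legitimate.
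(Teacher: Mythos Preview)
Your overall architecture---contradiction, regularity to freeze $a^k\equiv\bar a$, boundedness of $u_k$, and the two-case split on $\liminf t_k$---matches the paper, and your treatment of the case $t_k\to 0$ (rejected Armijo step, divide and pass to the limit) is essentially identical to the paper's Case~(ii). The gap is in the other case.

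In the case $\liminf_{k\in\mathcal K}t_k=t^\ast>0$, you claim that convergence $x_k\to\bar x$ forces $f^{\bar a_j}(x_{k+1})-f^{\bar a_j}(x_k)\to 0$. This is false: you only have $x_k\to\bar x$ along $\mathcal K$, and $x_{k+1}=x_k+t_ku_k$ with $t_k\ge t^\ast/2$ and $u_k\to\bar u\neq 0$, so $x_{k+1}$ does \emph{not} converge to $\bar x$ along $\mathcal K$. More fundamentally, your per-step decrease is from $x_k$ to $x_{k+1}$, but consecutive elements of the subsequence $\mathcal K$ may be separated by many iterations at which $a^k\neq\bar a$, so nothing you have written telescopes across $\mathcal K$. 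Regularity and the extraction $a^k\equiv\bar a$ only control behaviour \emph{within} $\mathcal K$; they do not give monotonicity of $f^{\bar a_j}(x_k)$ over the full sequence.

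What is missing is a scalar merit function that decreases at \emph{every} iteration, independently of which $a^k$ is selected. The paper introduces $\varsigma(A)=\inf_{z\in A}\mathcal G_e(z)$, which is monotone with respect to $\preceq^l$; since Step~\ref{step5} yields $F(x_{k+1})\preceq^l F(x_k)$ for all $k$, the sequence $\varsigma(F(x_k))$ is decreasing over the whole sequence, and a short computation with the Armijo inequality gives $\varsigma(F(x_{k+1}))\le\varsigma(F(x_k))+\beta t_k\Phi(x_k)$. Boundedness below (from $x_k\to\bar x$ along $\mathcal K$ and continuity) then forces $t_k\Phi(x_k)\to 0$, which in your setup immediately contradicts $t_k\ge t^\ast/2$ and $\Phi(x_k)\le-\delta$. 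Once you add this global monotone functional, your argument goes through; without it, the $t^\ast>0$ case does not close.
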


\begin{proof}
Without loss of generality, let $\{x_k\}$ be a subsequence of $\{x_k\}$ which converge to an accumulation point $\bar{x}$. We prove that $\bar{x}$ is stationary. Towards this, define the functional $\varsigma:\mathcal{P}(\mathbb{R}^m)\to \mathbb{R}\cup\{-\infty\}$ given by
 \[
 \varsigma(A)=\underset{z\in A}{\inf}~\mathcal{G}_e(z)\text{ for all }A\in\mathcal{P}(\mathbb{R}^m).
 \]
From Proposition \ref{gerstewitz}(iv), the function $\mathcal{G}_e$ is monotonic. Therefore, the functional $\varsigma$ is monotone with respect to the preorder $\preceq^l$, i.e., for all $A,B\in\mathcal{P}(\mathbb{R}^m)$, we have 
     \[
     A\preceq^l B\implies \varsigma(A)\leq \varsigma(B).
     \]
     Now in view of (\ref{armijo_2}) of Proposition \ref{armijo}, for every $k=0,1,2,\ldots$, we obtain
     \allowdisplaybreaks
     \begin{eqnarray}\label{convergence_1}
      &&\varsigma(F(x_{k+1}))\nonumber\\
      =&&\varsigma(F(x_k+t_ku_k))\nonumber\\
       \leq&& \underset{j\in[w_k]}{\min}\left\{ \mathcal{G}_e\left(f^{a^k_j}(x_k)+\beta t_k\nabla f^{a^k_j} (x_k)^\top u_k\right)\right\}  \nonumber\\
       \leq&& \underset{j\in[w_k]}{\min}\left\{ \mathcal{G}_e\left(f^{a^k_j}(x_k)+\beta t_k(\nabla f^{a^k_j} (x_k)^\top u_k+\tfrac{1}{2}u_{k}^\top B^{a^{k}_j}(x_k)u_k)\right)\right\} \nonumber\\
       &&\text{ from Proposition \ref{gerstewitz}(iv) and } 
       u_{k}^\top B^{a^k_j}(x_k) u_k>0 \nonumber\\
       \leq&& \underset{j\in[w_k]}{\min}\left\{ \mathcal{G}_e\left(f^{a^k_j}(x_k)\right)+\beta t_k\mathcal{G}_e\left(\nabla f^{a^k_j} (x_k)^\top u_k+\tfrac{1}{2}u_{k}^\top B^{a^k_j}(x_k)u_k\right)\right\}\nonumber\\
       &&\text{ from Proposition \ref{gerstewitz}(i)}\nonumber\\
       \leq&& \underset{j\in[w_k]}{\min}\left\{ \mathcal{G}_e\left(f^{a^k_j}(x_k)\right)+\beta t_k\underset{j'\in[w_k]}{\max}\mathcal{G}_e\left(\nabla f^{a^k_{j'}} (x_k)^\top u_k+\tfrac{1}{2}u_{k}^\top B^{a^k_{j'}}(x_k)u_k\right)\right\}\nonumber\\
       \leq&& \underset{j\in[w_k]}{\min}\mathcal{G}_e\left(f^{a^k_j}(x_k)\right)+\beta t_k\underset{j\in[w_k]}{\max}\left\{\mathcal{G}_e\left(\nabla f^{a^k_{j}} (x_k)^\top u_k+\tfrac{1}{2}u_{k}^\top B^{a^k_{j}}(x_k)u_k\right)\right\}\nonumber\\
       =&&\varsigma(F(x_k))+\beta t_k \Phi(x_k).
     \end{eqnarray}
     Therefore, for a fixed $k\in\mathbb{N}\cup\{0\}$, we get 
     \[
     -\beta t_k\underset{j\in [w_k]}{\max} \left\{\mathcal{G}_e(\nabla f^{a^k_j} (x_k)^\top u_k+\tfrac{1}{2}u_{k}^\top\nabla^2f^{a^k_j}(x_k)u_k\right\}\leq\varsigma(F(x_k))-\varsigma(F(x_{k+1})).
     \]
     On adding the above relation for $k=0,1,\ldots,\kappa$, we obtain
     \begin{align}\label{convergence_2}
         -\beta\sum_{k=0}^{\kappa} t_k\underset{j\in [w_k]}{\max} \left\{\mathcal{G}_e(\nabla f^{a^k_j}(x_k)^\top u_k+\tfrac{1}{2}u_{k}^\top B^{a^k_j}(x_k)u_k\right\}\leq\varsigma(F(x_0))-\varsigma(F(x_{\kappa +1})).
     \end{align}
     Since $\{x_k\}$ is a convergent sequence and $\mathcal{G}_e$ is monotonic,  from (\ref{convergence_2}), we have
     \begin{eqnarray}
     -\beta\underset{\kappa\to\infty}{\lim}\sum_{k=0}^{\kappa} t_k\underset{j\in [w_k]}{\max} \left\{\mathcal{G}_e(\nabla f^{a^k_j}(x_k)^\top u_k+\tfrac{1}{2}u_{k}^\top B^{a^k_j}(x_k)u_k\right\}< +\infty.\label{convergence_6}
     \end{eqnarray}
     Given that $\{x_k\}$ is a sequence of nonstationary points. Therefore, in view of (\ref{stationary_1}),
     for every $a^k\in P_{x_k},u_k\in\mathbb{R}^n,j\in[w(x_k)]$, we get $\nabla f^{a^k_j}(x_k)^\top u_k\in-\text{ int}(K)$. On proceeding in similar manner to (\ref{critical_0}), we can find $t_k>0$ such that  
     \[0<t_k<\left(\frac{-2}{\underset{j\in [w_k]}{\max}\{\mathcal{G}_e({u}_{k}^\top B^{a^k_j}(x_k){u}_{k})\}}\right)\left(\underset{j\in [w_k]}{\max}\{\mathcal{G}_e(\nabla f^{a^k_j}(x_k)^{\top}{u}_{k})\}\right).\]
     In view of the above chosen $t_k$, we conclude that 
     \begin{eqnarray*}
     &&t_k\underset{j\in [w_k]}{\max} \left\{\mathcal{G}_e\left(\nabla f^{a^k_j} (x_k)^\top u_k+\tfrac{1}{2}u_{k}^\top B^{a^k_j}(x_k)u_k\right)\right\}\\
     &\leq& t_k\left\{\underset{j\in[w(\bar{x})]}{\max}\{\mathcal{G}_{e}(\nabla f^{a^k_j}(\bar{x})^{\top}\widetilde{u})\}-\underset{j\in[w(\bar{x})]}{\max}\{\mathcal{G}_{e}(\nabla f^{a^k_j}(\bar{x})^{\top}\widetilde{u})\}\right\}\\
     &=&0.
      \end{eqnarray*}
      Therefore, we get
       \begin{eqnarray}\label{convergence_7}
     &&-t_k\underset{j\in [w_k]}{\max} \left\{\mathcal{G}_e(\nabla f^{a^k_j} (x_k)^\top u_k+\tfrac{1}{2}u_{k}^\top B^{a^k_j}(x_k)u_k\right\}\geq0.
     \end{eqnarray}
     On combining (\ref{convergence_6}) and (\ref{convergence_7}), and taking limit $k\to\infty$, we have
     \begin{eqnarray}\label{convergence_3}
     &&\underset{k\to\infty}{\lim}~t_k\underset{j\in [w_k]}{\max} \left\{\mathcal{G}_e(\nabla f^{a^k_j} (x_k)^\top u_k+\tfrac{1}{2}u_{k}^\top B^{a^k_j}(x_k)u_k\right\}=0.
      \end{eqnarray}
      Since $\bar{x}$ is an accumulation point of the sequence $\{x_k\}$ and the sequences $\{t_k\}$ and $\{u_k\}$ are bounded (Theorem \ref{bounded}), we can find $\bar{t}\in\mathbb{R}_+$, $\bar{u}\in\mathbb{R}^n$ and a subsequence $\mathcal{K}\in \mathbb{N}$ such that
      \[ 
      t_k\overset{k\in\mathcal{K}}{\longrightarrow} \bar{t}\text{ and } u_k\overset{k\in\mathcal{K}}{\longrightarrow} \bar{u}.
      \]
      Note that the number of points in $[p]$ is finite, and $\bar{x}$ is a regular point of $F$. Thus, in view of Lemma \ref{regular_lemma}, for all $k\in\mathcal{K},u\in\mathbb{R}^n$, we have $~w_k=\bar{w},P_{x_k}=\bar{P},a^k=\bar{a}$ and
      \begin{eqnarray}\label{convergence_4}
        &&\Phi(x_k)=\xi_{x_k}(\bar{a},u_k)\leq \xi_{x_k}(a,u)\nonumber\\
        &\text{and}&\xi_{\bar{x}}(\bar{a},\bar{u})\leq \xi_{\bar{x}}(a,u)\text{ on taking limit }k\overset{\mathcal{K}}{\to} \infty.
      \end{eqnarray}
      Now, we analyze the following two cases:
      \begin{enumerate}
          \item[(i)] Let $\bar{t}>0$. In view of (\ref{convergence_3}) and for all $k\in\mathcal{K}$ such that $w_k=\bar{w},P_{x_k}=\bar{P},a^k=\bar{a}$, we have
          \begin{eqnarray}\label{convergence_5}
           &&\underset{k\overset{\mathcal{K}}{\to} \infty}{\lim}\underset{j\in [\bar{w}]}{\max} \left\{\mathcal{G}_e(\nabla f^{\bar{a}_{j}} (x_k)^\top u_k+\tfrac{1}{2}u_{k}^\top B^{\bar{a}_{j}}(x_k)u_k\right\}=0\nonumber\\
    &\implies&\underset{k\overset{\mathcal{K}}{\to} \infty}{\lim}\Phi(x_k)=0.   
          \end{eqnarray}
          Thus, by Theorem \ref{critical}, we conclude that $\bar{u}=0$. Hence, $\bar{x}$ is a stationary point of (\ref{sp_equation}).
          \item[(ii)] Let $\bar{t}=0$. Fix any $\kappa \in \mathbb{N}$. Since $t_k\overset{\mathcal{K}}{\to} \bar t = 0$, large enough $\nu^\kappa$ does not satisfy Armijo condition in Step \ref{step5} of Algorithm \ref{algo1}. Therefore, for all $k\in\mathcal{K}$ such that $w_k=\bar{w},~P_{x_k}=\bar{P}$, and $a^k=\bar{a}$, there exists $\bar{j}\in[\bar{w}]$ such that 
          \begin{eqnarray*}
          &&f^{\bar{a}_{\bar{j}}}(x_k+\nu^\kappa u_k)\npreceq f^{\bar{a}_{\bar{j}}}(x_k)+\beta \nu^\kappa \nabla f^{\bar{a}_{\bar{j}}} (x_k)^\top u_k\\
          &\implies&\frac{f^{\bar{a}_{\bar{j}}}(x_k+\nu^\kappa u_k)-f^{\bar{a}_{\bar{j}}}(x_k)}{\nu^\kappa}-\beta \nabla f^{\bar{a}_{\bar{j}}} (x_k)^\top u_k\not\in -K\\
          &\implies&\frac{f^{\bar{a}_{\bar{j}}}(\bar{x}+\nu^\kappa \bar{u})-f^{\bar{a}_{\bar{j}}}(\bar{x})}{\nu^\kappa}-\beta \nabla f^{\bar{a}_{\bar{j}}} (\bar{x})^\top \bar{u}\not\in -\text{ int}(K)~\text{taking }k\overset{\mathcal{K}}{\to}+\infty\\
          &\implies& (1-\beta)\nabla f^{\bar{a}_{\bar{j}}}(\bar{x})^\top\bar{u}\not\in-\text{ int}(K)~\text{taking limit }k\to+\infty\\
          &\implies&\nabla f^{\bar{a}_{\bar{j}}}(\bar{x})^\top\bar{u}\not\in-\text{ int}(K)~\text{since }(1-\beta)\in(0,1).
          \end{eqnarray*}
          Therefore, from (v) of Proposition \ref{gerstewitz}, we have 
          \begin{eqnarray*}
              &&\mathcal{G}_e(\nabla f^{\bar{a}_{\bar{j}}}(\bar{x})^\top\bar{u})\geq0\\
              &\text{or, }&\mathcal{G}_e(\nabla f^{\bar{a}_{\bar{j}}}(\bar{x})^\top\bar{u}+\tfrac{1}{2}\bar{u}^\top B^{\bar{a}_{\bar{j}}}(\bar{x})\bar{u})\geq0 \notag \\ 
              && \text{ by Proposition \ref{gerstewitz}(iv) and } \bar{u}^\top B^{\bar{a}_{\bar{j}}}(\bar{x})\bar{u} \succ 0\\
              &\text{or, }&0\leq \mathcal{G}_e(\nabla f^{\bar{a}_{\bar{j}}}(\bar{x})^\top\bar{u}+\tfrac{1}{2}\bar{u}^\top B^{\bar{a}_{\bar{j}}}(\bar{x})\bar{u})\\
              &\text{or, }&0\leq \xi_{\bar{x}}(\bar{a},\bar{u})=\underset{(a,u)\in P_{x}\times \mathbb{R}^n}{\min}\xi_x(a,u)=\Phi(\bar{x})\leq0\text{ from (\ref{phi_function})}.
          \end{eqnarray*}
          Thus, from Theorem \ref{critical}, we conclude that $\bar{x}$ is a stationary point of (\ref{sp_equation}). 
      \end{enumerate}
\end{proof}
 {Now, we recall some results that help in analyzing the convergence properties of the proposed quasi-Newton Algorithm \ref{algo1}. The first result follows from \cite[Theorem 3.1]{dennis1977quasi}, which says the following.  
Let $\{x_k\}$ be a sequence of nonstationary points converging to $\bar{x}$. Then, we have the relation given by  
\begin{eqnarray*}
\underset{k\to\infty}{\lim}\frac{\lVert(B(x_k)-\nabla^2f(\bar{x}))s_k\rVert}{\lVert s_k\rVert}=0,
\end{eqnarray*}
where $s_k=x_{k+1}-x_k=t_ku_k$, $B(x_k)$ is the BFGS approximation of the Hessian $\nabla^2 f(x_k)$, and $f$ is twice continuously differentiable such that $\nabla f(\bar{x})=0$ and $\nabla^2 f(\bar{x})$ is positive definite. Now, from \cite{povalej2014quasi}, and in view of above result, for any $\varepsilon>0$, there exists $k_0\in\mathbb{N}$ such that for all $k\geq k_0$, the following holds
\begin{eqnarray*}
\underset{k\to\infty}{\lim}\frac{\lVert(B^{a^k_j}(x_k)-\nabla^2f^{a^k_j}(\bar{x}))u_k\rVert}{\lVert u_k\rVert}<\varepsilon\text{ for every }j\in[w(x)].    
\end{eqnarray*}
\\
Next, we recall the assumptions from \cite{dennis1996numerical}, where for each $j\in[w(x)]$, the authors have estimated the error of approximating $\nabla f^{a^k_j}$ and $f^{a^k_j}$ by its linear and quadratic models, respectively. }

\begin{lemma}\emph{(See \cite{dennis1996numerical}).}\label{lipschitz}
Let $U$ be a nonempty subset of $\mathbb{R}^n$ and $\varepsilon,\delta>0$ be such that for any $x,y\in U$ with $\lVert y-x\rVert<\delta$, the following conditions hold:
\begin{enumerate}
    \item[(i)] For every $j\in [w(x)]$, we have
    \begin{eqnarray}\label{li_1}
    \lVert\nabla^2 f^{a_j}(y)-\nabla^2 f^{a_j}(x)\rVert<\tfrac{\varepsilon}{2}.
    \end{eqnarray}
    \item[(ii)] For every $j\in [w(x)]$ and $x,y\in U$ such that $\lVert y-x\rVert<\delta$, we have
    \begin{eqnarray}\label{li_2}
    \lVert \nabla f^{a_j}(y)-[\nabla f^{a_j}(x)+\nabla^2 f^{a_j}(x) (y-x)]\rVert<\tfrac{\varepsilon}{2}\lVert y-x\rVert.
    \end{eqnarray}
    \item[(iii)] Also, for every $j\in [w(x)]$ and $x,y\in U$ such that $\lVert y-x\rVert<\delta$, we have
    \begin{align}\label{li_3}
    \lvert  f^{a_j}(y)-(f^{a_j}(x)+\nabla f^{a_j}(x)^\top(y-x)+\tfrac{1}{2}(y-x)^\top\nabla^2 f^{a_j}(x) (y-x))\rvert<\tfrac{\varepsilon}{4}\lVert y-x\rVert^2.
    \end{align}
\end{enumerate}
\end{lemma}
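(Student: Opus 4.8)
The plan is to deduce all three estimates from the uniform continuity of the Hessians $\nabla^2 f^1,\ldots,\nabla^2 f^p$ on a bounded neighbourhood, combined with Taylor's theorem in integral-remainder form; in the form in which it will be used, the statement reads \emph{for every $\varepsilon>0$ there exists $\delta>0$ such that} \eqref{li_1}--\eqref{li_3} \emph{hold for all $x,y\in U$ with $\lVert y-x\rVert<\delta$ and all $j\in[w(x)]$}, and this is just the set-valued restatement of the classical error estimates in \cite[Chapter~4]{dennis1996numerical}. As in the intended application, I would take $U$ to be a bounded convex set (a ball around the accumulation point), so that each $\nabla^2 f^i$ is uniformly continuous on $U$ and every segment $[x,y]$ with endpoints in $U$ remains in $U$.

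For \eqref{li_1}, note first that by Assumption~\ref{assumption} every component $a_j$ of an element $a\in P_x$ lies in $[p]$, so it suffices to control the finitely many maps $\nabla^2 f^1,\ldots,\nabla^2 f^p$. Since each $f^i$ is twice continuously differentiable, $\nabla^2 f^i$ is uniformly continuous on the bounded set $U$, so for the given $\varepsilon>0$ I can pick $\delta_i>0$ with $\lVert\nabla^2 f^i(y)-\nabla^2 f^i(x)\rVert<\varepsilon/2$ whenever $x,y\in U$ and $\lVert y-x\rVert<\delta_i$. Setting $\delta=\min\{\delta_1,\ldots,\delta_p\}$ then gives \eqref{li_1} simultaneously for all $j\in[w(x)]$ and all $x\in U$.

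For \eqref{li_2} and \eqref{li_3}, I would invoke the first- and second-order integral Taylor remainders along the segment $[x,y]\subseteq U$. Writing $v=y-x$, the identity
\[
\nabla f^{a_j}(y)-\nabla f^{a_j}(x)-\nabla^2 f^{a_j}(x)\,v=\int_0^1\bigl(\nabla^2 f^{a_j}(x+tv)-\nabla^2 f^{a_j}(x)\bigr)v\,dt,
\]
together with $\lVert(x+tv)-x\rVert=t\lVert v\rVert<\delta$ and \eqref{li_1}, bounds the right-hand side in norm by $\int_0^1\tfrac{\varepsilon}{2}\lVert v\rVert\,dt=\tfrac{\varepsilon}{2}\lVert v\rVert$, which is \eqref{li_2}. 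For \eqref{li_3}, applying the scalar second-order Taylor formula componentwise to $f^{a_j}=(f^{a_j,1},\ldots,f^{a_j,m})^\top$ gives, for each $\ell\in[m]$,
\[
f^{a_j,\ell}(y)-f^{a_j,\ell}(x)-\nabla f^{a_j,\ell}(x)^\top v-\tfrac12 v^\top\nabla^2 f^{a_j,\ell}(x)v=\int_0^1(1-t)\,v^\top\bigl(\nabla^2 f^{a_j,\ell}(x+tv)-\nabla^2 f^{a_j,\ell}(x)\bigr)v\,dt,
\]
and estimating the integrand via \eqref{li_1} together with $\int_0^1(1-t)\,dt=\tfrac12$ yields the bound $\tfrac{\varepsilon}{2}\cdot\tfrac12\lVert v\rVert^2=\tfrac{\varepsilon}{4}\lVert v\rVert^2$ on the modulus of the $\ell$-th component of the left-hand side of \eqref{li_3}; since this holds for every $\ell\in[m]$, the full vector-valued expression obeys the same bound, giving \eqref{li_3}.

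I do not expect a genuine obstacle here: the only two points requiring a word of care are (a) the finiteness of the index range $[p]$, which is exactly what allows taking the minimum of the $\delta_i$'s and hence a single $\delta$ valid for all $j$ and all $x\in U$, and (b) the convexity of $U$, which keeps the integration segments inside the region where \eqref{li_1} is available. Both are automatic in the setting in which the lemma is applied, so the argument is essentially a transcription of the Dennis--Schnabel estimates into the present notation.
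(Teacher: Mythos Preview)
The paper does not actually supply a proof of this lemma: it is stated with the citation ``(See \cite{dennis1996numerical})'' and introduced with the words ``we recall the assumptions from \cite{dennis1996numerical},'' so the result is simply quoted from Dennis--Schnabel. Your argument---uniform continuity of the finitely many Hessians on a bounded convex $U$ to get \eqref{li_1}, then the first- and second-order integral Taylor remainders to deduce \eqref{li_2} and \eqref{li_3}---is exactly the standard derivation one finds in that reference, so your proposal is correct and matches what the cited source does.
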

Now, we use Lemma \ref{lipschitz} to estimate the error of approximations, where we use the BFGS approximation of the second-order derivative term Hessian.

\begin{lemma}\label{bfgs_assumption}
Let $U$ be a nonempty subset of $\mathbb{R}^n$. Let $V\subset U$ be a convex subset and $\delta\in \mathbb{R}_+$ be constants such that $x, y\in V$ with $\lVert y-x\rVert<\delta$. Let $\{x_k\}$ be a sequence generated by Algorithm \ref{algo1}. Assume that for every $\varepsilon>0$ and for every $j\in[w(x_k)]$, there exists $k_0\in\mathbb{N}$ such that for all $k\geq k_0$, we have 
\begin{eqnarray}\label{bfgs_eq1}
\frac{\left\lVert (\nabla^2 f^{a_j}(x_k)-B^{a_j}(x_k))(y-x_k)\right\rVert}{\lVert y-x_k\rVert}<\tfrac{\varepsilon}{2}.
\end{eqnarray}
Then, for any $x_k$ and $k\geq k_0$, and any $x,y\in V$ such that $\lVert y-x_k\rVert<\delta$, we have   
\begin{align}\label{assump_1} 
\lVert \nabla f^{a_j}(y)-(\nabla f^{a_j}(x_k)+B^{a_j}(x_k) (y-x_k))\rVert<\varepsilon\lVert y-x_k\rVert
\end{align}
and
\begin{align}\label{assump_2}
    \left\lvert  f^{a_j}(y)-\left(f^{a_j}(x_k)+\nabla f^{a_j}(x_k)^\top(y-x_k)+\tfrac{1}{2}(y-x_k)^\top B^{a_j}(x_k) (y-x_k)\right)\right\rvert<\tfrac{\varepsilon}{2}\lVert y-x_k\rVert^2,
\end{align}
for every $j\in [w(x_k)]$.
\end{lemma}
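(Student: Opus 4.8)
The strategy is to deduce \eqref{assump_1} and \eqref{assump_2} from the corresponding estimates of Lemma \ref{lipschitz}, in which the \emph{exact} Hessian $\nabla^2 f^{a_j}(x_k)$ figures, by adding and subtracting the Hessian term and absorbing the discrepancy through the BFGS approximation hypothesis \eqref{bfgs_eq1}. The halved and quartered right-hand sides $\tfrac{\varepsilon}{2}$ and $\tfrac{\varepsilon}{4}$ appearing in Lemma \ref{lipschitz} are calibrated exactly so that, once combined with the $\tfrac{\varepsilon}{2}$-bound furnished by \eqref{bfgs_eq1}, the final constants come out to be the claimed $\varepsilon$ and $\tfrac{\varepsilon}{2}$.

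First, I would fix $\varepsilon>0$ and $j\in[w(x_k)]$, and let $k_0\in\mathbb{N}$ be the index supplied by \eqref{bfgs_eq1}. For $k\ge k_0$ and $y\in V$ with $\lVert y-x_k\rVert<\delta$ --- so that $x_k$ and $y$ lie in the convex set $V\subseteq U$ at distance less than $\delta$, and Lemma \ref{lipschitz} is available with $x=x_k$ --- I would add and subtract $\nabla^2 f^{a_j}(x_k)(y-x_k)$ inside the norm on the left-hand side of \eqref{assump_1}. The triangle inequality then splits this quantity into $\lVert \nabla f^{a_j}(y)-\nabla f^{a_j}(x_k)-\nabla^2 f^{a_j}(x_k)(y-x_k)\rVert$, which is bounded by $\tfrac{\varepsilon}{2}\lVert y-x_k\rVert$ via \eqref{li_2}, plus $\lVert(\nabla^2 f^{a_j}(x_k)-B^{a_j}(x_k))(y-x_k)\rVert$, which is bounded by $\tfrac{\varepsilon}{2}\lVert y-x_k\rVert$ via \eqref{bfgs_eq1}. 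The sum is $\varepsilon\lVert y-x_k\rVert$, giving \eqref{assump_1}.

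For \eqref{assump_2}, I would add and subtract $\tfrac12(y-x_k)^\top\nabla^2 f^{a_j}(x_k)(y-x_k)$ inside the absolute value. The part built from the exact quadratic model is then less than $\tfrac{\varepsilon}{4}\lVert y-x_k\rVert^2$ by \eqref{li_3}, while the leftover term $\tfrac12\bigl\lvert(y-x_k)^\top(\nabla^2 f^{a_j}(x_k)-B^{a_j}(x_k))(y-x_k)\bigr\rvert$ is bounded, using the Cauchy--Schwarz inequality and then \eqref{bfgs_eq1}, by $\tfrac12\lVert y-x_k\rVert\cdot\tfrac{\varepsilon}{2}\lVert y-x_k\rVert=\tfrac{\varepsilon}{4}\lVert y-x_k\rVert^2$. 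Summing the two contributions, the total is less than $\tfrac{\varepsilon}{2}\lVert y-x_k\rVert^2$, which is exactly \eqref{assump_2}. Since $\varepsilon>0$ and $j\in[w(x_k)]$ were arbitrary, this completes the argument.

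I do not expect a genuine obstacle here: the proof is essentially bookkeeping with the triangle and Cauchy--Schwarz inequalities. The only points that require attention are (i) keeping the $\varepsilon$-budget consistent between Lemma \ref{lipschitz} and hypothesis \eqref{bfgs_eq1} --- that is, splitting the target $\varepsilon$ (respectively $\tfrac{\varepsilon}{2}$) into the two halves (respectively quarters) used above --- and (ii) checking that the hypotheses of Lemma \ref{lipschitz} are legitimately in force, namely that the pair $x_k,\,y$ belongs to the convex set $V\subseteq U$ with $\lVert y-x_k\rVert<\delta$, which is precisely what the statement assumes.
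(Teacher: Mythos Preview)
Your proposal is correct and matches the paper's own proof essentially line for line: the paper also adds and subtracts the exact Hessian term, applies the triangle inequality, and then invokes \eqref{li_2} (respectively \eqref{li_3}) together with \eqref{bfgs_eq1} to obtain the $\tfrac{\varepsilon}{2}+\tfrac{\varepsilon}{2}$ and $\tfrac{\varepsilon}{4}+\tfrac{\varepsilon}{4}$ splittings. Your explicit mention of Cauchy--Schwarz for the quadratic cross-term is the only cosmetic addition; the paper leaves that step implicit.
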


\begin{proof}
In view of (\ref{li_2}) of Lemma \ref{lipschitz}, for every $j\in [w(x_k)]$ and $x_k,y\in V$ such that $\lVert y-x_k\rVert<\delta$, we have
    \begin{eqnarray*}
    &&\lVert \nabla f^{a_j}(y)-[\nabla f^{a_j}(x_k)+B^{a_j}(x_k) (y-x_k)]\rVert\\
    &\leq&\lVert \nabla f^{a_j}(y)-\nabla f^{a_j}(x_k)-\nabla^2 f^{a_j}(x_k)(y-x_k)]\rVert+\lVert (\nabla^2 f^{a_j}(x_k)-B^{a_j}(x_k)) (y-x_k)]\rVert\\
    &<&\frac{\varepsilon}{2}\lVert y-x_k\rVert+\frac{\varepsilon}{2}\lVert y-x_k\rVert\text{ from (\ref{bfgs_eq1})}\\
    &<&\varepsilon\lVert y-x_k\rVert.
    \end{eqnarray*}
In the similar manner in (\ref{li_3}) of Lemma \ref{lipschitz}, for every $j\in [w(x_k)]$ and $x_k,y\in U$ such that $\lVert y-x_k\rVert<\delta$, we have
\allowdisplaybreaks
    \begin{eqnarray*}
    &&\left\lVert f^{a_j}(y)-\left(f^{a_j}(x_k)+\nabla f^{a_j}(x_k)^\top(y-x_k)+\tfrac{1}{2}(y-x_k)^\top B^{a_j}(x_k) (y-x_k)\right)\right\rVert\\
    &\leq&\left\lVert f^{a_j}(y)-f^{a_j}(x_k)-\nabla f^{a_j}(x_k)^\top(y-x_k)-\tfrac{1}{2}(y-x_k)^\top\nabla^2 f^{a_j}(x_k) (y-x_k)\right\rVert \\ && +\left\lVert \tfrac{1}{2}(y-x_k)^\top(\nabla^2 f^{a_j}(x_k)-B^{a_j}(x_k)) (y-x_k)\right\rVert\\
    &<&\frac{\varepsilon}{4}\lVert y-x_k\rVert^2+\frac{\varepsilon}{4}\lVert y-x_k\rVert^2\text{ from (\ref{bfgs_eq1})}\\
    &<&\frac{\varepsilon}{2}\lVert y-x_k\rVert^2.
    \end{eqnarray*}    
\end{proof}

\begin{theorem}\emph{(Superlinear convergence).}\label{superlinear}
Let $\{x_k\}$ be a sequence of nonstationary points generated by Algorithm \ref{algo1} and $\bar{x}$ be one of its accumulation points. Additionally, assume that $\bar{x}$ is a regular point of $F$, and  there exists a nonempty convex set $V \subseteq \mathbb{R}^n$ and $p>0,q>0,\delta>0,\epsilon>0$ for which the following conditions hold: 
\begin{enumerate}
    \item[(i)]\label{supe_1} $pI\leq B^{a_j}(x)\leq qI$ \text{ for all }$j\in [w(x)]$,
    \item[(ii)]\label{supe_2}  {$\lVert\nabla^2 f^{a_j}(x)-\nabla^2f^{a_j}(y) \rVert<\frac{\varepsilon}{2} $\text{ for all } $x,y\in V$\text{ with }$\lVert x-y \rVert<\delta$,}
    \item[(iii)]  {$\lVert (\nabla^2 f^{a_j}(x_k)-B^{a_j}(x_k)) (y-x_k)]\rVert<\frac{\varepsilon}{2}$ for all $x,y\in V$\text{ with }$\lVert x-y \rVert<\delta$}, \text{ and }  
    \item[(iv)] \label{supe_3} $\tfrac{\varepsilon}{q}\leq 1-2\beta$.
\end{enumerate}
Then, for sufficiently large $k\in\mathbb{N}$, we have $t_k=1$ and the sequence $\{x_k\}$ converges superlinearly to $\bar{x}\in\mathbb{R}^n$.
\end{theorem}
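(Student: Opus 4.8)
The plan is to run the classical two-phase argument for quasi-Newton superlinear convergence, transported to the set-valued setting through the Gerstewitz scalarization. First a chain of reductions. Since $\bar x$ is a regular accumulation point, Theorem~\ref{convergence} gives that $\bar x$ is a stationary point of \eqref{sp_equation}, hence $\Phi(\bar x)=0$. By Lemma~\ref{regular_lemma} there is a neighbourhood of $\bar x$ on which $w(\cdot)=\bar w$ and $P_{(\cdot)}\subseteq P_{\bar x}$; since $P_{\bar x}$ is finite the algorithmic choices $a^k$ live in a finite set, and along the subsequence converging to $\bar x$ we may assume, after a further extraction, that $P_{x_k}\equiv\bar P$ and $a^k\equiv\bar a$ are constant. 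By continuity of $\Phi$ (Proposition~\ref{continuity}), $\Phi(x_k)\to\Phi(\bar x)=0$; since $u_k$ minimizes the strongly convex map $\xi_{x_k}(a^k,\cdot)$, whose modulus is at least $p$ by hypothesis~(i), we get $0=\xi_{x_k}(a^k,0)\ge\xi_{x_k}(a^k,u_k)+\tfrac{p}{2}\|u_k\|^2=\Phi(x_k)+\tfrac{p}{2}\|u_k\|^2$, so $\|u_k\|^2\le-\tfrac{2}{p}\Phi(x_k)\to0$. Thus $u_k\to0$, and in particular $x_k\in V$, $x_k+u_k\in V$ and $\|u_k\|<\delta$ for all large $k$.

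\textbf{Phase~1 (the unit step is accepted).} Fix $k$ large. By Lemma~\ref{bfgs_assumption} (using hypotheses~(ii) and~(iii)) the quadratic-model error satisfies $\bigl|f^{\bar a_j}(x_k+u_k)-\bigl(f^{\bar a_j}(x_k)+\nabla f^{\bar a_j}(x_k)^\top u_k+\tfrac12 u_k^\top B^{\bar a_j}(x_k)u_k\bigr)\bigr|<\tfrac{\varepsilon}{2}\|u_k\|^2$ for every $j\in[\bar w]$. I would write, with $h_j(u):=\nabla f^{\bar a_j}(x_k)^\top u+\tfrac12 u^\top B^{\bar a_j}(x_k)u$,
\[
f^{\bar a_j}(x_k+u_k)-f^{\bar a_j}(x_k)-\beta\,\nabla f^{\bar a_j}(x_k)^\top u_k=E_k^j+(1-\beta)h_j(u_k)+\tfrac{\beta}{2}\,u_k^\top B^{\bar a_j}(x_k)u_k,
\]
where $E_k^j$ is the model error, then apply $\mathcal{G}_e$ and use its sublinearity, positive homogeneity, monotonicity and Lipschitz continuity (Proposition~\ref{gerstewitz}), together with $\mathcal{G}_e(h_j(u_k))\le\xi_{x_k}(a^k,u_k)=\Phi(x_k)<0$ (Theorem~\ref{critical}), the bound $u_k^\top B^{\bar a_j}(x_k)u_k\preceq q\|u_k\|^2e$ from~(i), and the model-error bound. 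Combining these estimates with the lower bound $pI\le B^{\bar a_j}(x)$ and hypothesis~(iv) yields $\mathcal{G}_e\bigl(f^{\bar a_j}(x_k+u_k)-f^{\bar a_j}(x_k)-\beta\,\nabla f^{\bar a_j}(x_k)^\top u_k\bigr)\le0$, i.e.\ $f^{\bar a_j}(x_k+u_k)\preceq f^{\bar a_j}(x_k)+\beta\,\nabla f^{\bar a_j}(x_k)^\top u_k$ for all $j\in[\bar w]$ by the representability property (Proposition~\ref{gerstewitz}(v)). Hence the backtracking test in Step~\ref{step5} already accepts $\nu^0=1$, so $t_k=1$ and $x_{k+1}=x_k+u_k$ for all large $k$.

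\textbf{Phase~2 (superlinear rate and convergence of the whole sequence).} With $t_k=1$ one has $s_k=x_{k+1}-x_k=u_k$, so the Dennis--Mor\'e-type estimates recalled just before Lemma~\ref{lipschitz} apply: $\|(B^{\bar a_j}(x_k)-\nabla^2 f^{\bar a_j}(\bar x))u_k\|/\|u_k\|\to0$ for every $j$. I would then compare $u_k$, the minimizer of the quadratic-model min--max subproblem at $x_k$, with the exact Newton correction at $x_k$ by writing out the first-order optimality condition of that subproblem (using the dual feasibility satisfied at the stationary point $\bar x$) and Taylor-expanding the gradients $\nabla f^{\bar a_j}$ about $\bar x$; the Dennis--Mor\'e condition makes the two directions differ by $o(\|u_k\|)$, and since $\|u_k\|=O(\|x_k-\bar x\|)$ (again from~(i) and the optimality of $u_k$) the classical Newton estimate gives $\|x_{k+1}-\bar x\|=o(\|x_k-\bar x\|)$. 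In particular there is a neighbourhood $N$ of $\bar x$ on which $t_k=1$, $\|x_{k+1}-\bar x\|\le\tfrac12\|x_k-\bar x\|$ and $x_{k+1}\in N$; since $\bar x$ is an accumulation point some iterate enters $N$, after which the whole tail stays in $N$, is Cauchy, converges to $\bar x$, and does so superlinearly.

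The main obstacle is the superlinear-rate step: in the set-valued setting the ``Newton equation'' defining $u_k$ is only implicit, being the optimality condition of a nonsmooth min--max problem assembled from $\mathcal{G}_e$, so one must control the optimality multipliers, the possible drift of the active index and of the partition (locally tamed, but only through the regularity of $\bar x$ and Lemma~\ref{regular_lemma}), and the interaction with the cone order, in order to extract a genuine $o(\|x_k-\bar x\|)$ bound rather than merely $O(\|x_k-\bar x\|)$. A secondary technical point is the constant bookkeeping through the sublinearity, monotonicity, translativity and Lipschitz properties of $\mathcal{G}_e$ needed to land exactly on hypothesis~(iv), and the upgrade from a convergent subsequence to convergence of the full sequence via the local contraction just described.
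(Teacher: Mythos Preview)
Your Phase~1 argument---showing that the unit step is eventually accepted by combining the quadratic-model error bound from Lemma~\ref{bfgs_assumption} with hypothesis~(iv)---is essentially the paper's approach, though the paper reaches the key inequality $f^{a^k_j}(x_k+u_k)-f^{a^k_j}(x_k)\le\beta\nabla f^{a^k_j}(x_k)^\top u_k$ by first extracting an explicit first-order optimality condition for the subproblem via Danskin's theorem, namely $\sum_j\lambda_j\nabla f^{a^k_j}(x_k)+\sum_j\lambda_j B^{a^k_j}(x_k)u_k=0$ for some simplex weights $\lambda$, rather than by manipulating $\mathcal{G}_e$ directly.

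For Phase~2 your route genuinely diverges from the paper's and, as you yourself anticipate, runs into difficulty: there is no single ``exact Newton correction'' here, and the individual gradients $\nabla f^{\bar a_j}(\bar x)$ need not vanish at a stationary point of \eqref{sp_equation}, so the scalar Dennis--Mor\'e template does not transfer without first identifying the right scalarization. The paper sidesteps exactly this obstacle by reusing the Danskin optimality condition: since $\sum_j\lambda_j\bigl(\nabla f^{a^k_j}(x_k)+B^{a^k_j}(x_k)u_k\bigr)=0$, the linear-model error bound \eqref{assump_1} of Lemma~\ref{bfgs_assumption} immediately yields $\bigl\|\sum_j\lambda_j\nabla f^{a^k_j}(x_{k+1})\bigr\|\le\varepsilon\|u_k\|$, and then the bound on $\|u_{k+1}\|$ in terms of $\max_j\|\nabla f^{a^k_j}(x_{k+1})\|$ (as in the proof of Theorem~\ref{uk_bounded}) gives $\|u_{k+1}\|\le\tfrac{2L\varepsilon}{q}\|u_k\|$. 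A telescoping/geometric-series argument then upgrades this to $\|x_{k+1}-\bar x\|/\|x_k-\bar x\|\le\tau$ for any prescribed $\tau\in(0,1)$, by choosing $\varepsilon$ small enough. Thus the paper never compares $u_k$ to a Newton step at all; it controls the \emph{weighted} gradient at the next iterate directly, which is precisely what resolves the multiplier-control issue you flagged as the main obstacle.
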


\begin{proof}
From Theorem \ref{convergence}, it can be observed that the sequence $\{x_k\}$ converges to a stationary point $\bar{x}$ of (\ref{sp_equation}). Moreover, each $f^{a_j}$ is twice continuously differentiable. Therefore, for any $\varepsilon>0$, there exists $\delta_{\varepsilon}>0$ such that for all $x,y\in \mathcal{B}(\bar{x},\delta_\varepsilon)$, we have
\[
\mathcal{B}(\bar{x},\delta_\varepsilon)\subset V, ~ \lVert B^{a_j}(x)-B^{a_j}(y)\rVert<\varepsilon\text{ with }\lVert x-y \rVert<\delta_{\varepsilon}. 
\]
 {Now, for $x\in U$, and $\Delta_{w(x)} = \{(\lambda_1, \lambda_2, \ldots, \lambda_{w(x)}) \in \mathbb{R}^{w(x)}_+\text{ with }\sum_{i = 1}^{w(x)} \lambda_i = 1\}$, we define a function $\Theta_{\lambda}:V\times \mathbb{R}^n\to \mathbb{R}^m$ such that 
 \[
 \Theta_{\lambda}(x,u)=\sum_{j=1}^{[w(x)]}\lambda_j\nabla f^{a_j}(x)^\top u+\tfrac{1}{2}\sum_{j=1}^{[w(x)]}\lambda_j u^\top B^{a_j}(x)^\top u,~\lambda_{w(x)}\in\Delta_{w(x)}.
 \]}
Note that for any $a^k\in P_{x_k}$ and $j\in[w(x_k)]$, each $f^{a_j^k}$ is twice continuously differentiable and strongly convex function. Moreover, for any $x_k\in\mathbb{R}^n$, and $a^k\in P_{x_k}$, the set $P_{x_k}$ is finite. Therefore, the function $\Theta_{\lambda}(a,\cdot)$ is strongly convex in $\mathbb{R}^n$ and hence the function $\Theta_{\lambda}(a,\cdot)$ attains its minimum. Then, using Danskin's theorem (see Proposition 4.5.1, pp. 245--247 in \cite{bertsekas2003convex}) and the first order necessary condition for the existence of a minimizer, we conclude that
\begin{eqnarray}
&&\sum_{j=1}^{[w_k]}\lambda_j\nabla f^{a^k_j}(x_{k})+\sum_{j=1}^{[w_k]}\lambda_jB^{a^k_j}(x_{k}) u_k=0\label{super_2}\\
\implies&& u_k= -\left[\sum_{j=1}^{[w_k]}\lambda_jB^{a^k_j}(x_{k})\right]^{-1}\sum_{j=1}^{[w_k]}\lambda_j\nabla f^{a^k_j}(x_{k})\nonumber\\
\implies&& u_k \leq -\tfrac{1}{q} \sum_{j=1}^{[w_k]}\lambda_j\nabla f^{a^k_j}(x_{k})\text{ since }B^{a^k_j}(x_k)\leq qI\nonumber\\
\implies&& u_k\leq-\tfrac{1}{q} \underset{\lambda\in \Delta_k}{\max}\sum_{j=1}^{[w_k]}\lambda_j\nabla f^{a^k_j}(x_{k})\label{super_1}.
\end{eqnarray}
As the sequence $\{x_k\}$ converges to $\bar{x}$, there exists $k_{\varepsilon}\in\mathbb{N}$ such that for all $k\geq k_\varepsilon$, we have $
x_k,x_k+u_k\in \mathcal{B}(\bar{x},\delta_{\varepsilon})$. Now, using the second-order Taylor expansion at $x_k$ of $f^{a^k_j}$, we have
\begin{eqnarray*}
 f^{a^k_j}(x_k+u_k)&\leq& f^{a^k_j}(x_k) + \nabla f^{a^k_j}(x_k)^\top u_k+\tfrac{1}{2}u_k^\top B^{a^k_j}(x_k)u_k +\tfrac{\varepsilon}{2}\lVert u_k\rVert^2.
 \end{eqnarray*}
 Note that $\max\{b_1,b_2,\ldots,b_{w(x)}\}=\underset{\lambda\in \Delta_{w(x)}}{\max}\sum_{i=1}^{w(x)} \lambda_i b_i $. Therefore, applying this identity in the above relation, we get
 \begin{eqnarray*}
&&f^{a^k_j}(x_k+u_k)- f^{a^k_j}(x_k)\\
&\leq& \nabla f^{a^k_j}(x_k)^\top u_k+\tfrac{1}{2}u_k^\top B^{a^k_j}(x_k)u_k +\tfrac{\varepsilon}{2}\lVert u_k\rVert^2\\
 &\leq& \beta \nabla f^{a^k_j}(x_k)^\top u_k+(1-\beta)\nabla f^{a^k_j}(x_k)^\top u_k+\tfrac{(q+\varepsilon)}{2}\lVert u_k\rVert^2\text{ since }B^{a^k_j}(x)\leq qI\\
  &\leq& \beta \nabla f^{a^k_j}(x_k)^\top u_k+(1-\beta)\underset{j\in[w(x_k)]}{\max}\{\nabla f^{a_j}(x_k)^\top u_k\}+\tfrac{(q+\varepsilon)}{2}\lVert u_k\rVert^2\\
  &\leq& \beta \nabla f^{a^k_j}(x_k)^\top u_k+(1-\beta)\underset{\lambda\in\Delta_k}{\max}\left\{\sum_{j=1}^{[w(x_k)]}\lambda_j\nabla f^{a^k_j}(x_k)^\top u_k\right\}+\tfrac{(q+\varepsilon)}{2}\lVert u_k\rVert^2 \\
 &\leq& \beta \nabla f^{a^k_j}(x_k)^\top u_k-q(1-\beta)\lVert u_k\rVert^2 + \tfrac{(q+\varepsilon)}{2}\lVert u_k\rVert^2\text{ from (\ref{super_1})}\\ 
 &\leq& \beta \nabla f^{a^k_j}(x_k)^\top u_k+\frac{\varepsilon-q(1-2\beta)}{2}\lVert u_k\rVert^2,
\end{eqnarray*}
where from the assumption (iii), we conclude that $\varepsilon-q(1-2\beta)\leq 0$ and $t_k=1$ hold in the above relation. Now, for $k\geq k_\varepsilon$, $\lambda\in \Delta_k$, and $j\in[w(x_k)]$, we have
\allowdisplaybreaks
\begin{align}\label{super_3}
&\left\lVert \sum_{j=1}^{[w(x_{k+1})]}\lambda_j\nabla f^{a^k_j}(x_{k+1}) \right\rVert\nonumber\\
=&\left\lVert  \sum_{j=1}^{[w(x_{k+1})]}\lambda_j\nabla f^{a^k_j}(x_{k}+u_k) \right\rVert\nonumber\\
=& \left\lVert  \sum_{j=1}^{[w(x_{k+1})]}\lambda_j\nabla f^{a^k_j}(x_{k}+u_k)-\left[\sum_{j=1}^{[w(x_k)]}\lambda_j\nabla f^{a^k_j}(x_{k})+\sum_{j=1}^{[w(x_k)]}\lambda_jB^{a^k_j}(x_{k}) u_k\right]\right\rVert\text{ from (\ref{super_2})}\nonumber\\
\leq& \varepsilon \lVert u_k\rVert \text{ from (\ref{assump_1}) of Lemma \ref{bfgs_assumption}}. 
\end{align}
Now, combining the assumption (i) and boundedness of $\{u_{k+1}\}$ (Theorem \ref{uk_bounded}), we observe that
\begin{eqnarray}
\tfrac{1}{2}u_k^\top B^{a^k_j}(x_k)u_k\leq \tfrac{q}{2}\lVert u_k\rVert^2 \text{ for any }j\in[w(x_k)].    
\end{eqnarray}
Therefore, incorporating the above relation in (\ref{bounded_eq1}), we get
\begin{align*}
&\lVert u_{k+1}\rVert\\
\leq&\tfrac{2L}{q} \underset{j\in [w(x_k)]}{\max}\left\lVert\nabla f^{a^k_j}(x_{k+1})\right\lVert\\
\leq &\tfrac{2L}{q}\left \{\underset{\lambda\in\Delta_k}{\max}\left\lVert\sum_{j=1}^{[w(x_k)]}\nabla f^{a^k_j}(x_{k+1})\right\lVert\right\}\text{using }\max\{b_1,b_2,\ldots,b_{w(x)}\}=\underset{\lambda\in \Delta_{w(x)}}{\max}\sum_{i=1}^{w(x)} \lambda_i b_i\\
\leq&\tfrac{2L\varepsilon}{q}\lVert u_k\rVert\text{ from (\ref{super_3})}.    
\end{align*}
In view of the above relation, we have
\begin{align*}
\lVert x^{k+1}-x^{k+2}\rVert= \lVert u^{k+1}\rVert  \leq \tfrac{2L\varepsilon}{q} \lVert u_k\rVert =\tfrac{2L\varepsilon}{q} \lVert x^{k}-x^{k+1}\rVert, 
\end{align*}
and for any $k\geq1$ and $m\geq1$, we obtain
\begin{align}\label{superlinear_1}
\lVert x^{k+m}-x^{k+m+1}\rVert\leq& \left(\tfrac{2L\varepsilon}{q}\right)
\lVert x^{k+m-2}-x^{k+m-1}\rVert\nonumber\\
\leq& \left(\tfrac{2L\varepsilon}{q}\right)^2\lVert x^{k+m-1}-x^{k+m}\rVert\nonumber\\
\leq&\cdots\leq \left(\tfrac{2L\varepsilon}{q}\right)^m\lVert x^{k}-x^{k+1}\rVert. 
\end{align}
Now, we assume $0<\tau<1$ and define 
\[
\bar{\varepsilon}=\min\left\{q(1-2\beta),\tfrac{\tau}{1+2\tau}\left(\tfrac{q}{2L\varepsilon}\right)\right\}.
\]
If we take $\varepsilon<\bar{\varepsilon}$ and $k\geq k_\varepsilon$, then by convergence of $\{x_k\}$ and the relation (\ref{superlinear_1}), we have
\begin{eqnarray*}
\lVert \bar{x}-x^{k+1}\rVert\leq \sum_{m=1}^{\infty}\lVert x^{k+m}-x^{k+m+1}\rVert &\leq& \sum_{m=1}^{\infty}\left(\tfrac{\tau}{1+2\tau}\right)^m\lVert x^{k}-x^{k+1}\rVert\\
&=&\tfrac{\tau}{1+\tau}\lVert x^{k}-x^{k+1}\rVert. 
\end{eqnarray*}
Therefore, we obtain
\begin{align*}
&\lVert \bar{x}-x^{k}\rVert \geq \lVert x^{k}-x^{k+1}\rVert-\lVert x^{k+1}-\bar{x}\rVert \geq \tfrac{1}{1+\tau}\lVert x^{k}-x^{k+1}\rVert.
\end{align*}
Hence, we can conclude that if $\varepsilon<\bar{\varepsilon}$ and $k\geq k_\varepsilon$, then 
$\frac{\lVert \bar{x}-x^{k+1}\rVert}{\lVert \bar{x}-x^{k}\rVert}\leq \tau.$
\end{proof}

\section{Numerical Execution} \label{section6}
In this section, we execute the proposed quasi-Newton Algorithm \ref{algo1} on some numerical examples. We analyze the experimentation of Algorithm \ref{algo1} in MATLAB R2023b software. This MATLAB software is installed in an IOS machine equipped with a 12-core CPU and 8 GB RAM. In the numerical  implementation of the algorithm, we have considered the following parameter values:
\begin{itemize}
    \item We considered the cone $K$ to be a standard ordering cone, i.e.,  {$K=\mathbb{R}^2_+$} for all test instances except Example \ref{example5} and Example \ref{example6}, and the parameter  {$e=(1,1)^\top\in$ int($K$)} for the scalarizing function $\mathcal{G}_e$.
    \item The parameters $\beta$ and $\nu$ in Step \ref{step5} for the line search of the Algorithm \ref{algo1} is chosen as $\beta=0.5$ and $\nu=0.6$.
    \item The employed stopping criterion is $\lVert u_k\rVert <0.001$, or a maximum number of 100 iterations is reached.
    \item To figure out the set Min ($F(x_k),K$) at the $k$-th iteration in Step \ref{step2} of Algorithm \ref{algo1}, we adopt the common method of comparing the elements in $F(x_k)$. 
    \item At the $k$-th iteration in Step \ref{step2} of Algorithm \ref{algo1}, for every $a^k\in P_k$, we compute the unique solution $u_a$ of the function given by
    \[
     \underset{u\in \mathbb{R}^n}{\min}~\xi_{x_k}(a,u).
    \]
    In the conclusion, we find $ (a^k,u_k)=\underset{(a,u)\in P_k\times\mathbb{R}^n}{\text{argmin}}\xi_{x_k}(a,u)$ with the help of an inbuilt function \textit{fminsearch} in MATLAB.
    \item We have considered some test problems from the literature subjected to slight modifications and some freshly introduced problems. For each test considered, we generated 100 initial points randomly and ran the algorithm. In the context of each experiment, we have presented a table with three columns. The resulting error is the value of $\lVert u_k\rVert$ at the end of the final iteration. The subsequent values are gathered for each test instance: 
    \begin{itemize}
        \item \textbf{Initial Points:}   The value represents the first column of the table, which counts the number of initial points taken to solve the proposed Algorithm \ref{algo1}.
        \item \textbf{Iterations:} 
          This value presents the second column with a 6-tuple (Min, Max, Mean, Median, Mode, SD) whose components are the minimum, maximum, mean, median, mode, and standard deviation of the iterations in which the stopping condition is reached.
          \item \textbf{CPU Time:} This value indicates the third column, which is again a 6-tuple (Min, Max, Mean, Median, $\lceil\text{Mode}\rceil$, SD) that shows the minimum, maximum, mean, median, least integer greater or equal to mode, and standard deviation of the CPU time (in seconds) taken by the initial point in reaching the stopping condition.
    \end{itemize}
\end{itemize}

Additionally, the numerical values are presented with precision up to four decimal places to ensure clarity. For every examined problem, the values of $F$ at each iteration for the initial and final points are marked with black and red colors, respectively. We use shapes $\bullet,~\boldsymbol{\star},$ and $\blacktriangle$ to depict the values $F$ for different initial points. Cyan, magenta, and green colors are used to represent the intermediate iterates for different initial points. Initial points are depicted in black, and the termination point is in red. If the initial point is depicted by black \emph{bullet} $\bullet$, then the terminating is depicted by the red \emph{bullet} {\red $\bullet$}, and the intermediate iterates by cyan \emph{bullets} {\cyan $\bullet$} or magenta \emph{bullets} {\magenta $\bullet$} or green \emph{bullets} {\green $\bullet$}. That is, we use the same shape for depicting a complete sequence of iterates generated by Algorithm \ref{algo1}. 

Furthermore, we compare the results of the proposed quasi-Newton's method (abbreviated as QNM) algorithm (Algorithm \ref{algo1}) with the existing steepest descent method (abbreviated as SD) for set optimization presented in \cite{bouza2021steepest}.\\

Now, we discuss different test problems on which our algorithm was tested. The first test problem is freshly introduced.


\begin{example}\label{example1}
Consider the set-valued function $F:\mathbb{R}\rightrightarrows \mathbb{R}^2$ defined as
$$F(x)=\{f^1(x),f^2(x),\ldots,f^{50}(x)\},$$
where for each $i\in[50],~f^i:\mathbb{R}\to\mathbb{R}^2$ is given by 
\[
f^i(x)=\begin{pmatrix}
xe^x+\sin\left(\tfrac{2\pi(i-1))}{50}\right)\\
2x\cos(2x)+\cos\left(\tfrac{2\pi(i-1))}{50}\right)
\end{pmatrix}.
\]
Output of Algorithm \ref{algo1} for different initial points of Example \ref{example1} are depicted in Figure \ref{figure1}.   Figure \ref{figure1a} depicts the sequence $\{F(x_k)\}$ generated by Algorithm \ref{algo1} for the   starting point as $x_0=2.3000$. In Figure \ref{figure1b}, we exhibit the output of Algorithm \ref{algo1} for three initial points. The performance of Algorithm \ref{algo1} for Example \ref{example1} is shown in Table \ref{table1}. The values in Table \ref{table1} show that the proposed method performs better than the existing SD method.

\begin{figure}[ht]
\centering
\mbox{\subfloat[The value of $F$ at each iteration generated by Algorithm \ref{algo1} on Example \ref{example1} for the initial point $x_0=2.3000$]{ \includegraphics[width=0.475\textwidth]{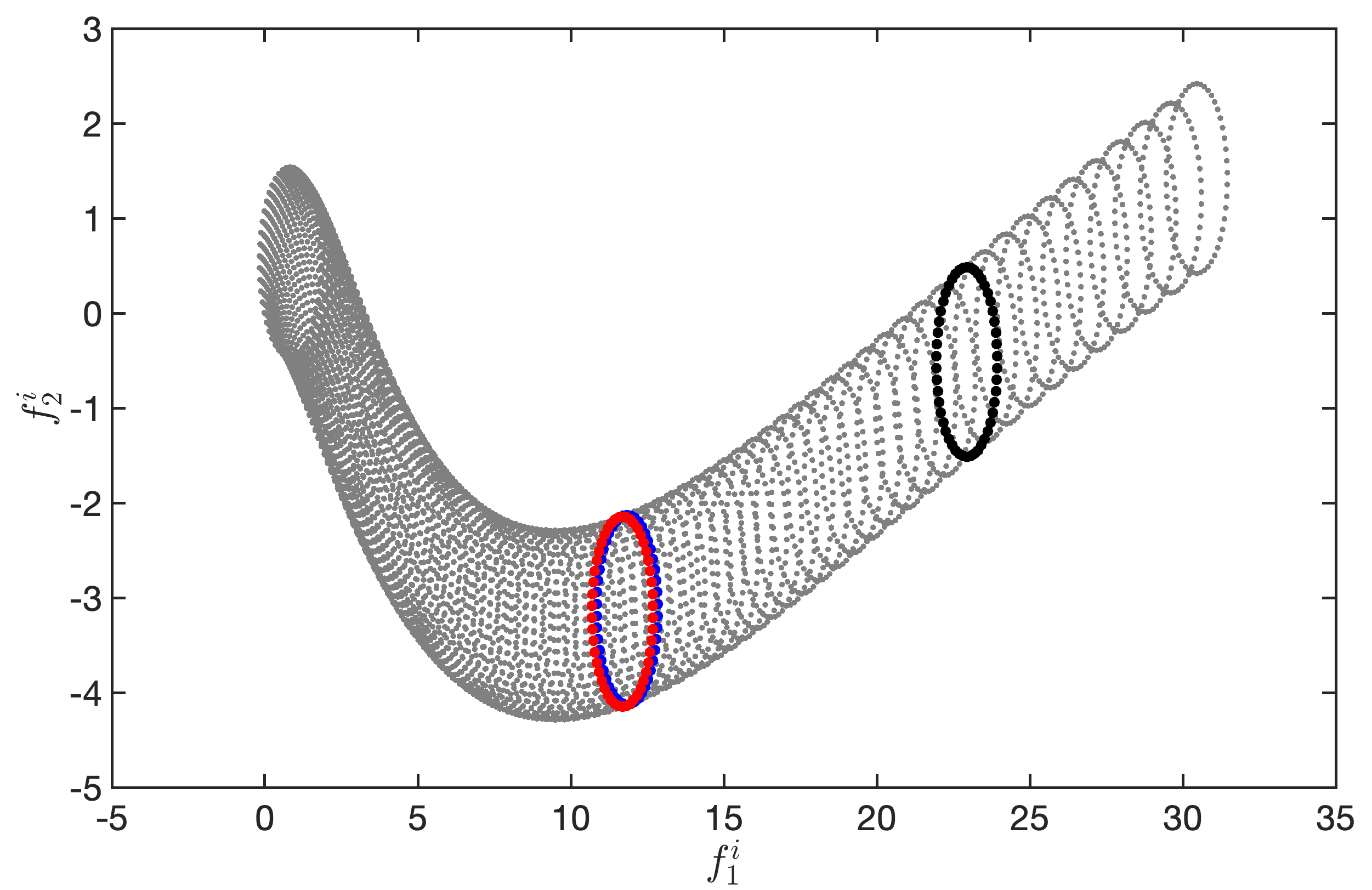}\label{figure1a}}\quad
\subfloat[The value of $F$ for three different initial points at each iteration generated by Algorithm \ref{algo1} on Example \ref{example1}]{\includegraphics[width=0.475\textwidth]{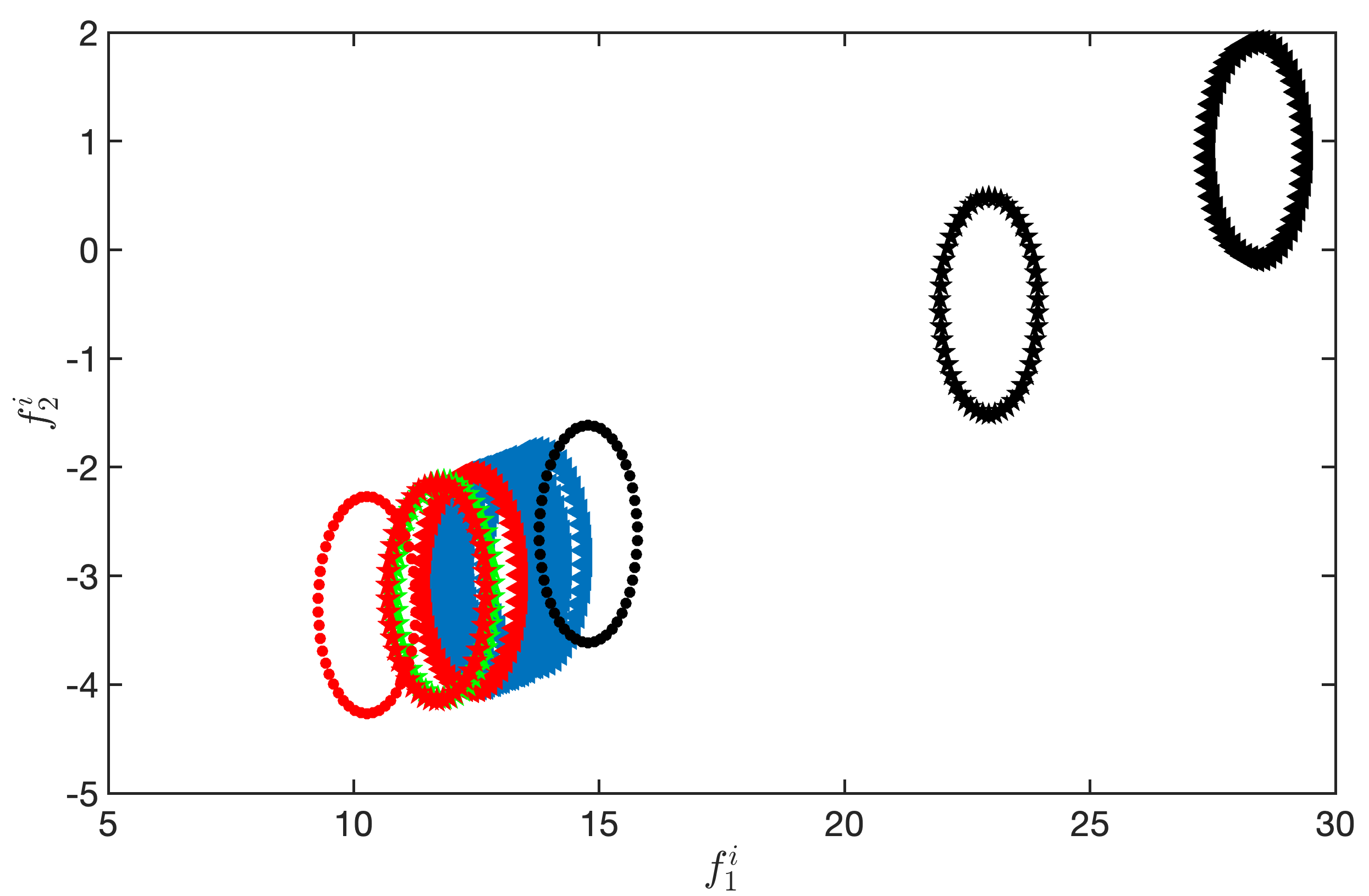}\label{figure1b}}\quad}
\caption{Obtained output on Algorithm \ref{algo1} for Example \ref{example1}}
\label{figure1} 
\end{figure}

\begin{table}[ht]
\centering
\caption{Performance of Algorithm \ref{algo1} on Example \ref{example1}}
\centering 
\scalebox{0.70}{
\begin{tabular}{c c c c} 
\hline 
Number of& Algorithm  &Iterations &CPU time\\
initial points& &(Min, Max, Mean, Median, Mode, SD)  &(Min, Max, Mean, Median, $\lceil\text{Mode}\rceil$, SD)   \\ 
\hline 
$100$ & QNM&($1,~22,~19.1700,~19,~21,~2.4621$) & ($3.2973,~44.5769,~37.5000,~37.6309,~3,~4.7245$) \\ 
    & SD &($1,~33,~32.4600,~32,~32,~0.5009$) & ($2.1389,~32.6268,~31.5719,~31.3467,~30,~0.5165$) \\
\hline 
\end{tabular}}
\label{table1}
\end{table}

Further, for the initial point $x_0=2.3000$, the decreasing behavior in the values of vector-valued functions at each iteration has been exhibited in Table \ref{table_1c}.

\begin{table}[ht]
\centering
\caption{ {Output of Algorithm \ref{algo1} on Example \ref{example1} for the initial point $x_0=2.3000$}}
\centering 
\scalebox{0.73}{
\begin{tabular}{c c c c c } 
\hline
$k$& $x_k^\top$ &$f^{10}(x_k)$ & $f^{25}(x_k)$ & $f^{50}(x_k)$     \\ 
\hline 
 0&  $2.3000$& $(23.8454, -0.0901)$& $(23.0660, -1.5080)$ &$(22.8153, 0.4762)$  \\ 
 1&  $1.8541$& $(12.7453,-2.7029)$& $(11.9658,-4.1208)$ &$(11.7151,-2.1366)$  \\  
 2&  $1.8458$& $(12.5946,-2.7214)$& $(11.8151,-4.1393)$ &$(11.5644,-2.1551)$   \\
3&  $1.8397$& $(12.4845,-2.7343)$& $(11.7050,-4.1522)$ &$(11.4543,-2.1679)$  \\

\hline 
\end{tabular}}
\label{table_1c}
\end{table}
\end{example}

\begin{example}\label{example2}
Consider the set-valued function $F:\mathbb{R}\rightrightarrows \mathbb{R}^3$ defined as
$$F(x)=\{f^1(x),f^2(x),\ldots,f^{30}(x)\},$$
where for each $i\in[30],f^i:\mathbb{R}\to\mathbb{R}^3$ is given by 
\[
f^i(x)=\begin{pmatrix}
0.27\sin\left(\tfrac{2\pi(i-1))}{30}\right)\cos\left(\tfrac{2\pi(i-1))}{30}\right)+x^2\\
\cos(2x)+\tfrac{1}{(1+e^{2x})}+0.27\cos\left(\tfrac{2\pi(i-1))}{30}\right)\\
0.27x^2+\left(\tfrac{i-1}{30}\right)
\end{pmatrix}.
\]
Output of Algorithm \ref{algo1} for different initial points of Example \ref{example2} are depicted in Figure \ref{figure2}. Figure \ref{figure2a} depicts the sequence $\{F(x_k)\}$ generated by Algorithm \ref{algo1} for the  starting point as $x_0=2.1300$. In Figure \ref{figure2b}, we exhibit the output of Algorithm \ref{algo1} for three initial points and depict their corresponding $F$-values. 

The performance of Algorithm \ref{algo1} for Example \ref{example2} is shown in Table \ref{table2}. Moreover, we have compared the results of the QNM with the SD method for set optimization as presented in Table \ref{table2}. The values in Table \ref{table2} show that the proposed method performs better than the existing SD method. 

\begin{figure}[ht]
\centering
\mbox{\subfloat[The value of $F$ at each iteration generated by Algorithm \ref{algo1} on Example \ref{example2} for the initial point $x_0=2.1300$]{ \includegraphics[width=0.475\textwidth]{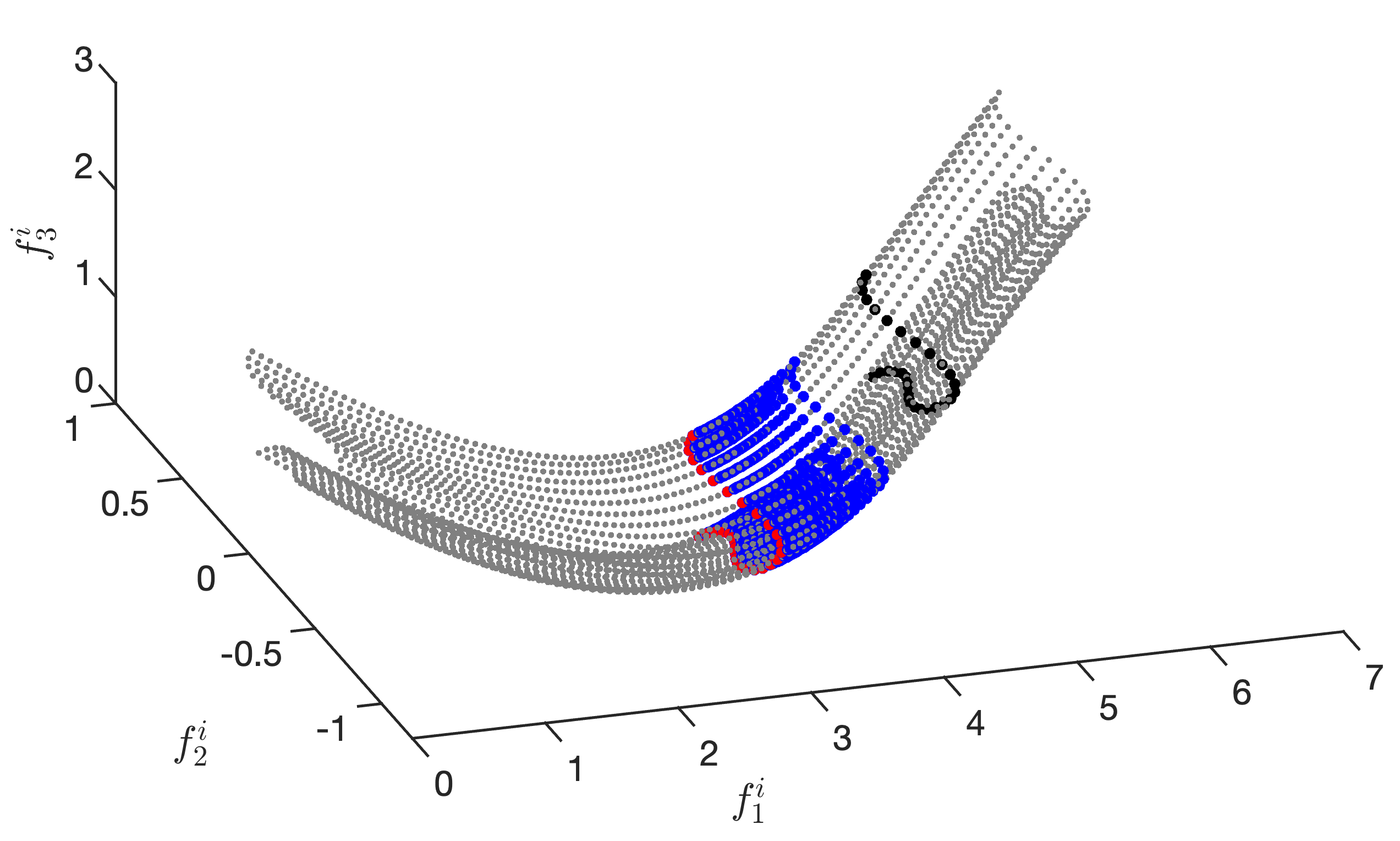}\label{figure2a}}\quad
\subfloat[The value of $F$ for three different initial points at each iteration generated by Algorithm \ref{algo1} on Example \ref{example2}]{\includegraphics[width=0.475\textwidth]{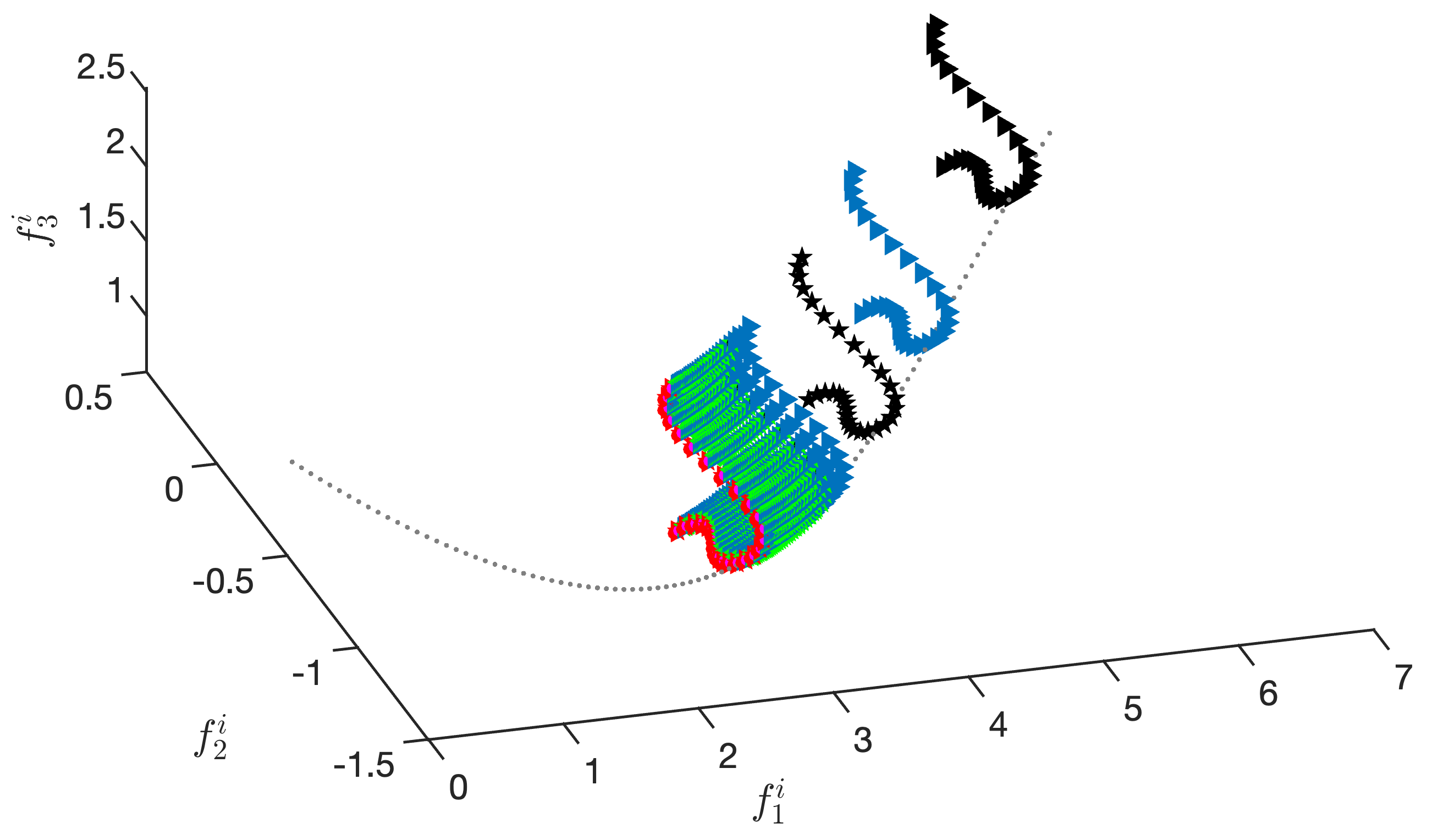}\label{figure2b}}\quad}
\caption{Obtained output of Algorithm \ref{algo1} for Example \ref{example2}}
\label{figure2} 
\end{figure}

\begin{table}[ht]
\centering
\caption{Performance of Algorithm \ref{algo1} on Example \ref{example2}}
\centering 
\scalebox{0.675}{
\begin{tabular}{c c c c} 
\hline
Number of& Algorithm  &Iterations &CPU time\\
initial points& &(Min, Max, Mean, Median, Mode, SD)  &(Min, Max, Mean, Median, $\lceil\text{Mode}\rceil$, SD)   \\ 
\hline 
$100$ & QNM&($1,~10,~5,~4.5800
,~5,~1,~1.9719$) & ($2.5750,~23.7773,~14.9322,~16.2000,~2,~10.2129$) \\ 
    & SD &($1,~11,~10.220,~9,~2,2.4887$) & ($1.2113,~28.1483,~26.1314,~26.1002,~25,~0.2662$) \\
\hline 
\end{tabular}}
\label{table2}
\end{table}
\end{example}


\begin{example}\label{example3}
Consider the function $F:\mathbb{R}\rightrightarrows \mathbb{R}^2$ defined as
$$F(x)=\{f^1(x),f^2(x),\ldots,f^{25}(x)\},$$
where for each $i\in[25],f^i:\mathbb{R}\to\mathbb{R}^2$ is given by 
\[
f^i(x)=\begin{pmatrix}
x_1^2+\cos(x_2)+\cos\left(\tfrac{2\pi(i-1)}{100}\right)\sin\left(\tfrac{2\pi(i-1)}{100}\right)^2+x_2^2\\
2x_1^2+\sin(x_1)+\cos\left(\tfrac{2\pi(i-1)}{100}\right)^2\sin\left(\tfrac{2\pi(i-1)}{100}\right)+2x_2^2
\end{pmatrix}.
\]
\begin{figure}
    \centering
    \subfloat[~The value of $F$ at each iteration generated by Algorithm \ref{algo1} for initial point $x_0=(1.0000,-1.5000)^\top$ of Example \ref{example3}]{\includegraphics[width=0.48\textwidth]{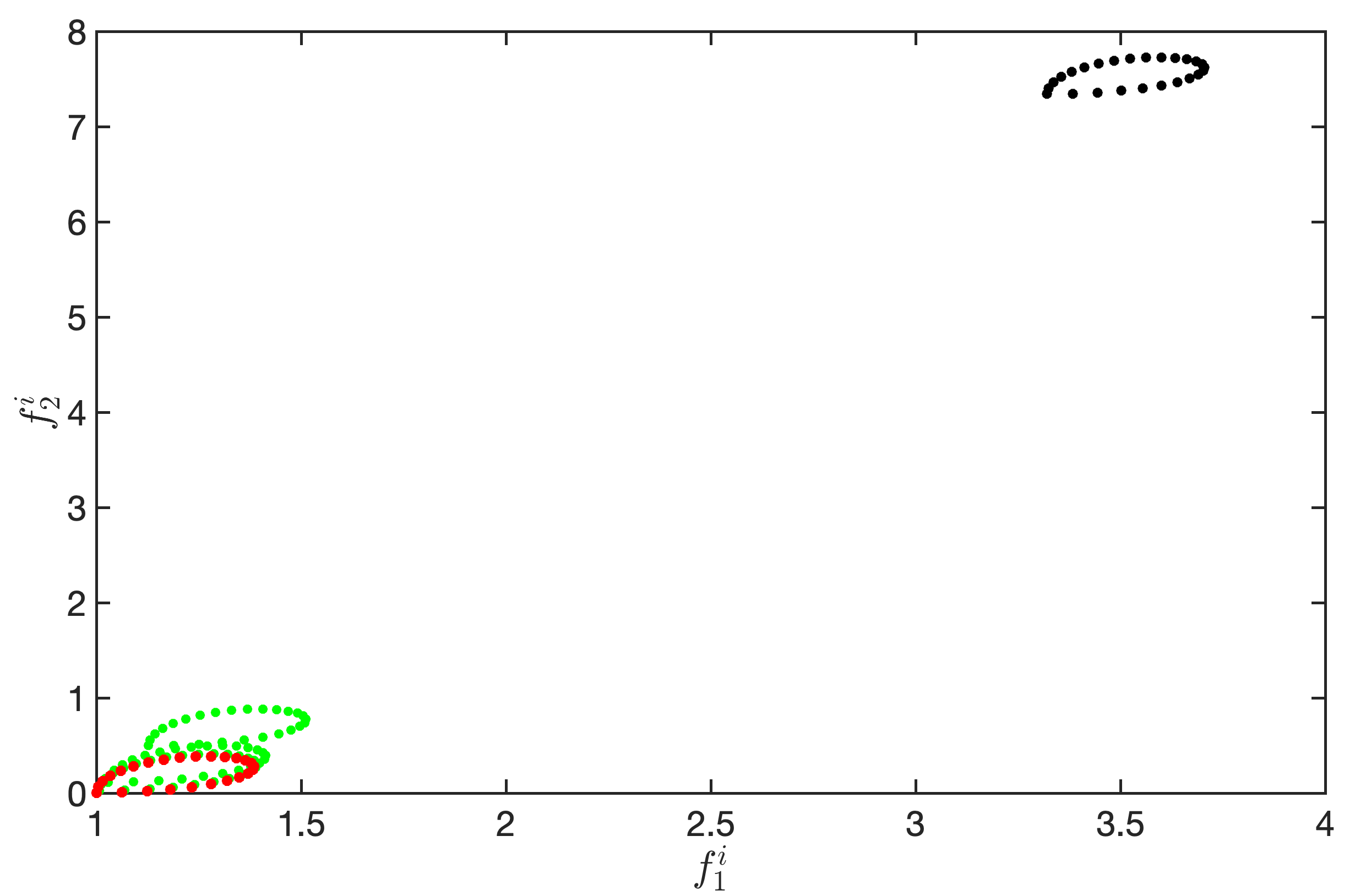}\label{figur3a}} \quad
    \subfloat[~The movement of subsequent $x_k$ generated by Algorithm \ref{algo1} of initial point $x_0=(1.0000,-1.5000)^\top$ of Example \ref{example3}]{\includegraphics[width=0.48\textwidth]{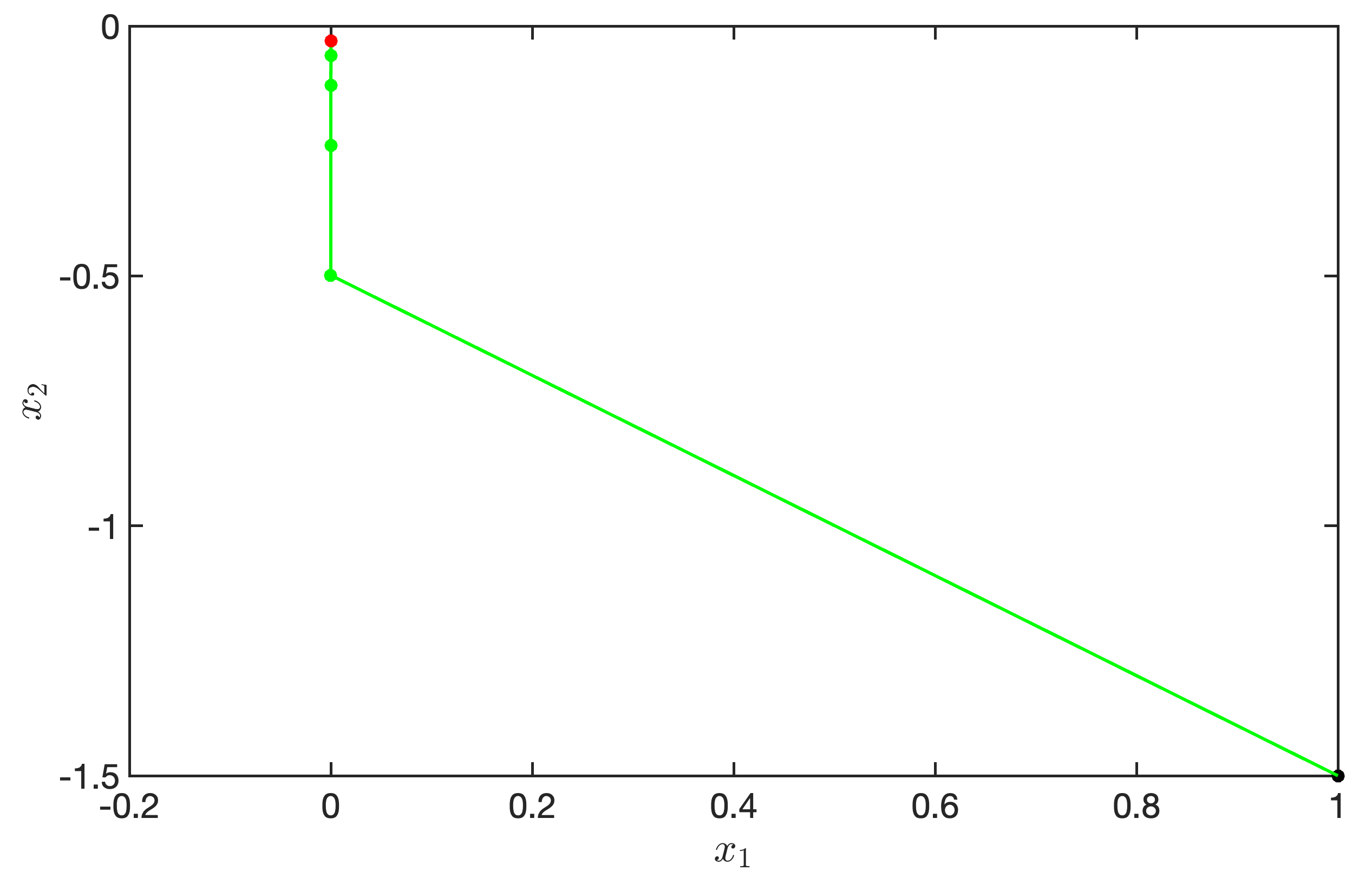}\label{figur3b}} \qquad
    \subfloat[~The value of $F$ at each iteration generated by Algorithm \ref{algo1} for three different randomly chosen initial points of Example \ref{example3}]{\includegraphics[width=0.48\textwidth]{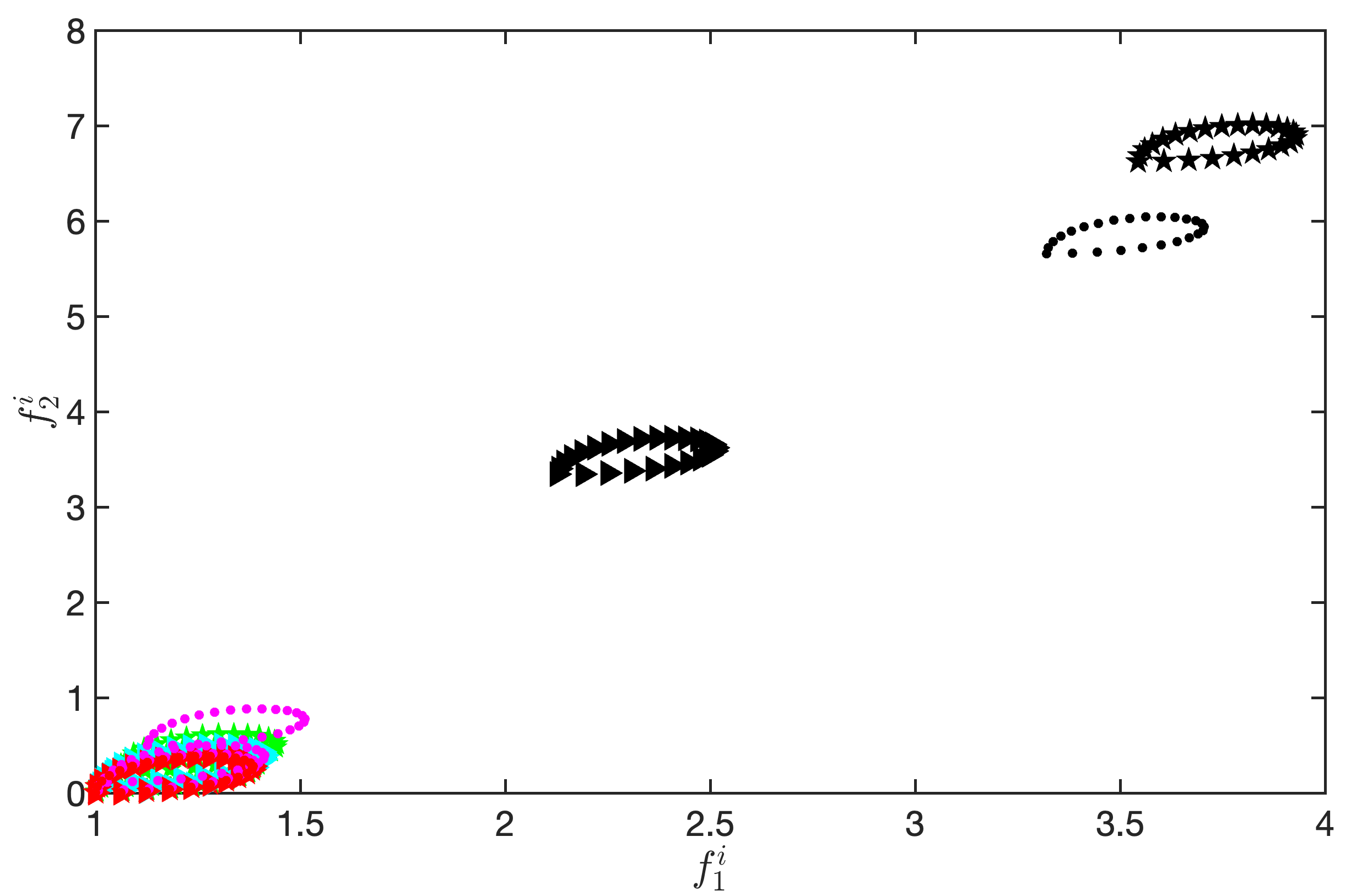}\label{figur3c}}\quad
    \subfloat[~The movement of subsequent $x_k$ generated by Algorithm \ref{algo1} for three different randomly chosen initial points of Example \ref{example3}]{\includegraphics[width=0.48\textwidth]{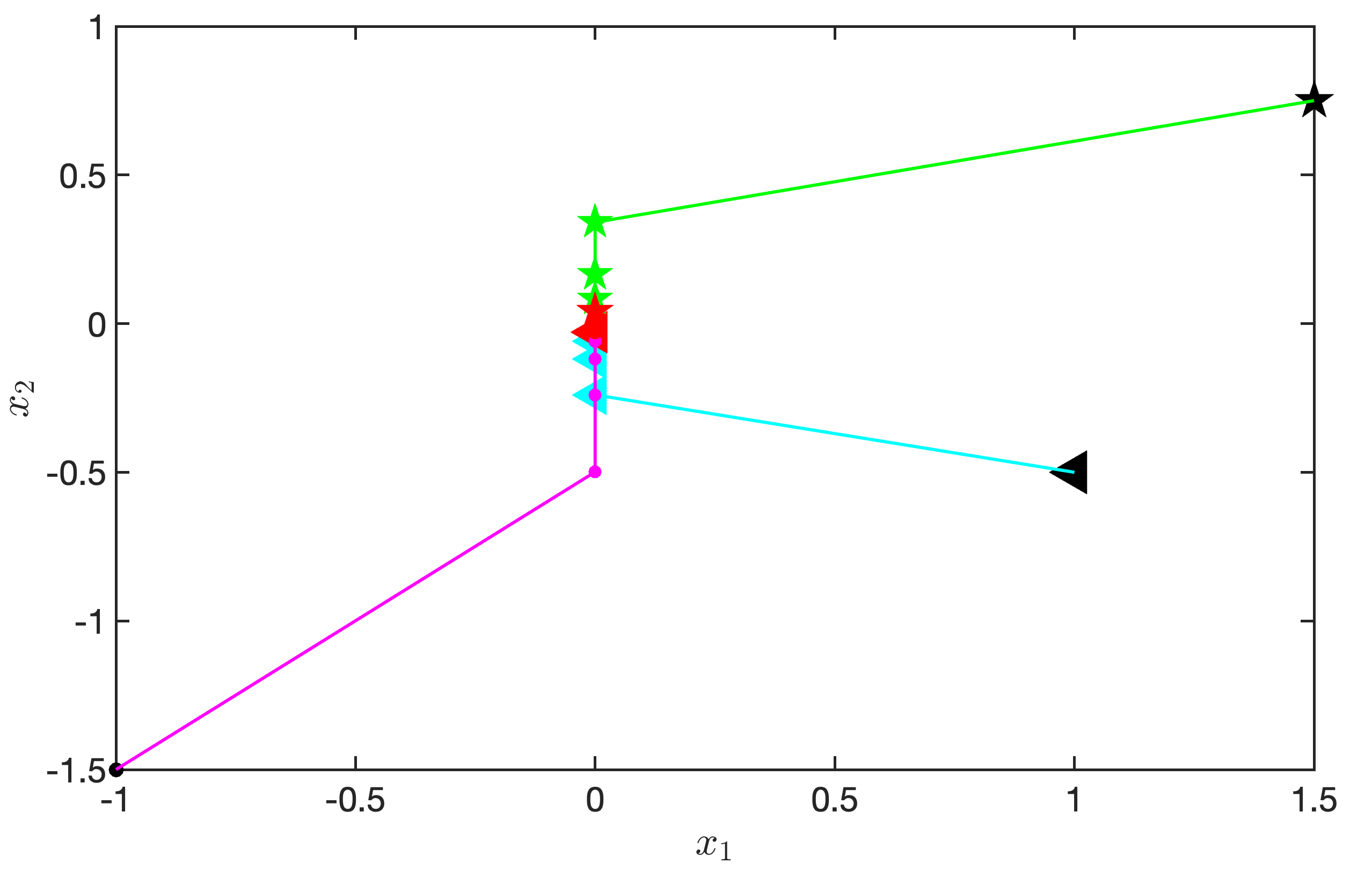}\label{figur3d}}
    \caption{Output of Algorithm \ref{algo1} for Example \ref{example3}}
    \label{figur3}
\end{figure}
In Figure \ref{figur3}, the iterates generated by Algorithm \ref{algo1} for different initial points taken from the set $[-5,5]\times[-5,5]$ are given. The sequence of iterates $\{x_k\}$ and the corresponding $\{F(x_k)\}$ generated by Algorithm \ref{algo1} for an initial point $x_0=(1.0000,-1.5000)^\top$ are given in Figure \ref{figur3b} and Figure \ref{figur3a}, respectively. Moreover, for the other three randomly selected initial points, the sequence of iterates $\{x_k\}$ and the corresponding $\{F(x_k)\}$ generated by Algorithm \ref{algo1} are shown in Figure \ref{figur3d} and Figure \ref{figur3c}, respectively. 

The performance of Algorithm \ref{algo1} for Example \ref{example3} is shown in Table \ref{table3}. Moreover, we have compared the results of the QNM with the SD method for set optimization as presented in Table \ref{table3}. The values in Table \ref{table3} show that the proposed method performs better than the existing SD method.

\begin{table}[ht]
\centering
\caption{Performance of Algorithm \ref{algo1} on Example \ref{example3}}
\centering 
\scalebox{0.68}{
\begin{tabular}{c c c c} 
\hline
Number of& Algorithm  &Iterations &CPU time\\
initial points& &(Min, Max, Mean, Median, Mode, SD)  &(Min, Max, Mean, Median, $\lceil\text{Mode}\rceil$, SD)   \\ 
\hline 
$100$ & QNM&($1,~6,~5.7400,~6,~6,~0.4845$) & ($1.2551,~42.8398,~23.8484,~29.5973,~11,~11.9814$) \\ 
    & SD &($1,~14,~13.9900,~14,14,0.1000$) & ($0.112,~4.2931,~3.2962,~3.2843,~3,~0.1097$) \\
\hline 
\end{tabular}}
\label{table3}
\end{table}

For the initial point $x_0=(1.0000,-1.5000)^\top$, the decreasing behavior in the values of vector-valued functions at each iteration has been exhibited in Table \ref{table_3c}.

\begin{table}[ht]
\centering
\caption{ {Output of Algorithm \ref{algo1} on Example \ref{example3} for the initial point $x_0=(1.0000,-1.5000)^\top$}}
\centering 
\scalebox{0.76}{
\begin{tabular}{c c c c c } 
\hline
$k$& $x_k^\top$ &$f^{5}(x_k)$ & $f^{15}(x_k)$ & $f^{25}(x_k)$     \\ 
\hline 
 0&  $(1.0000,-1.5000)$& $(3.3806, 7.5748)$& $(3.6992,7.6545)$ &$(3.3833, 7.3454)$  \\  
 1&  $( -0.0000,-0.4987)$& $(1.1868, 0.7308)$& $(1.5054, 0.8105)$ &$(1.1895, 0.5014)$   \\
2&  $(0.0000,-0.2392)$& $(1.0886, 0.3477)$& $(1.4072, 0.4275)$ &$(1.0913, 0.1184)$  \\
3&  $(0.0000,-0.1185)$& $(1.0669,0.2614)$& $(1.3855,0.3412)$ &$(1.0696,0.0320)$  \\
4&  $(0.0001,-0.0591)$& $(1.0617,0.2404)$& $(1.3802,0.3201)$ &$(1.0643,0.0110)$  \\
5&  $(0.0001,-0.0295)$& $(1.0603,0.2352)$& $(1.3789, 0.3149)$ &$(1.0630, 0.0058)$  \\
\hline 
\end{tabular}}
\label{table_3c}
\end{table}
\end{example}


\begin{example}\label{example4}
Consider the function $F:\mathbb{R}^2\rightrightarrows \mathbb{R}^3$ defined as
$$F(x)=\{f^1(x),f^2(x),\ldots,f^{10}(x)\},$$
where for each $i\in[10],f^i:\mathbb{R}\to\mathbb{R}^2$ is given by 
\[
f^i(x)=\begin{pmatrix}
e^{x_1}+\sin\left(\tfrac{2\pi(i-1)}{20}\right)+e^{x_2}\\
2e^{x_1}+\cos\left(\tfrac{2\pi(i-1)}{20}\right)+2e^{x_2}\\
x_1^2+\tfrac{(i-1)}{20}+x_2^2
\end{pmatrix}.
\]
\begin{figure}
    \centering
    \subfloat[~The value of $F$ at each iteration generated by Algorithm \ref{algo1} for initial point $x_0=(1.0000,-1.5000)^\top$ of Example \ref{example4}]{\includegraphics[width=0.48\textwidth]{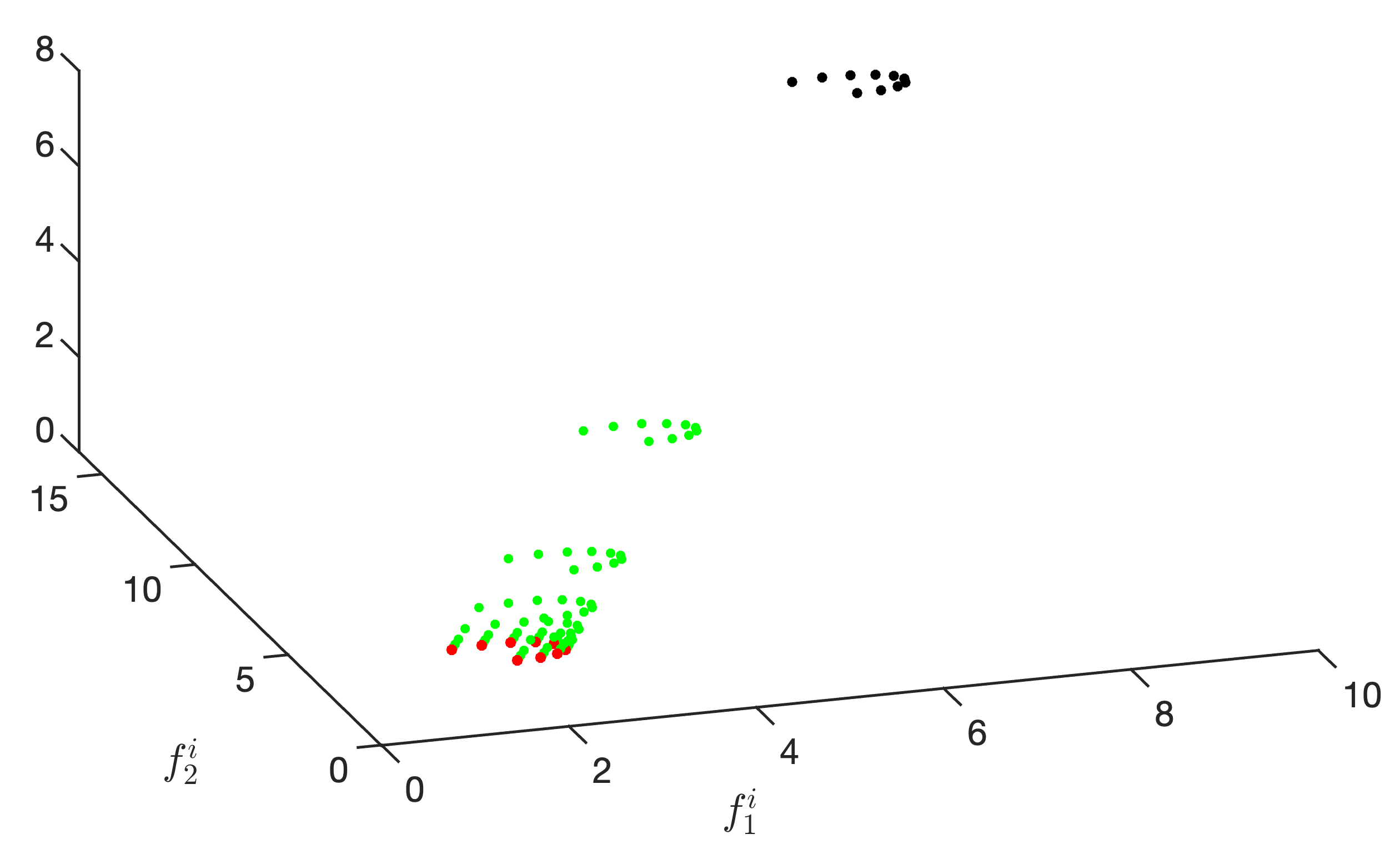}\label{figur4a}} \quad
    \subfloat[~The movement of subsequent $x_k$ generated by Algorithm \ref{algo1} for the initial point $x_0=(1.0000,-1.5000)^\top$ of Example \ref{example4}]{\includegraphics[width=0.48\textwidth]{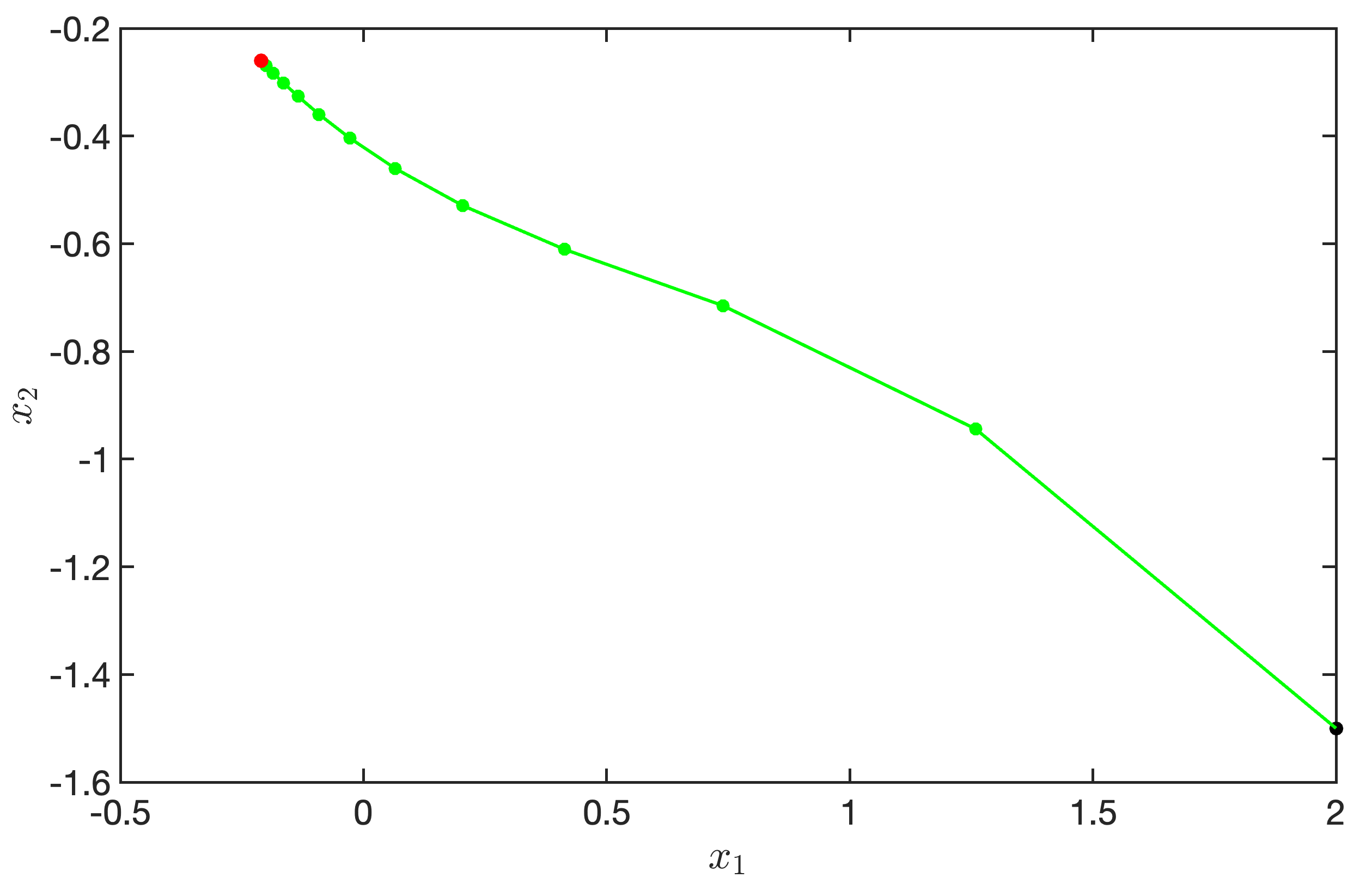}\label{figur4b}} \qquad
    \subfloat[~The value of $F$ at each iteration generated by Algorithm \ref{algo1} for three different randomly chosen initial points of Example \ref{example4}]{\includegraphics[width=0.48\textwidth]{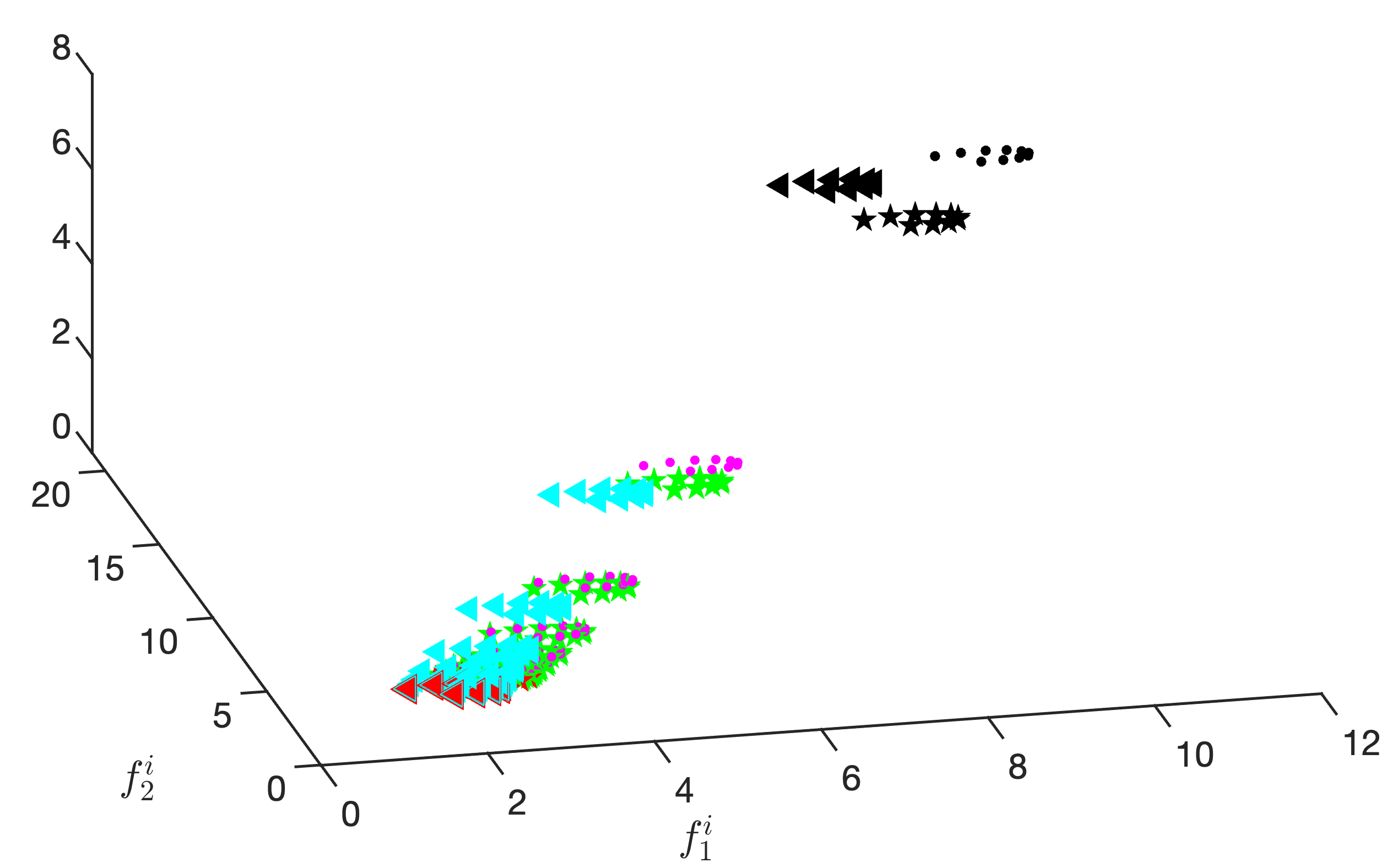}\label{figur4c}}\quad
    \subfloat[~The movement of subsequent $x_k$  generated by Algorithm \ref{algo1} for three different randomly chosen initial points of Example \ref{example4}]{\includegraphics[width=0.48\textwidth]{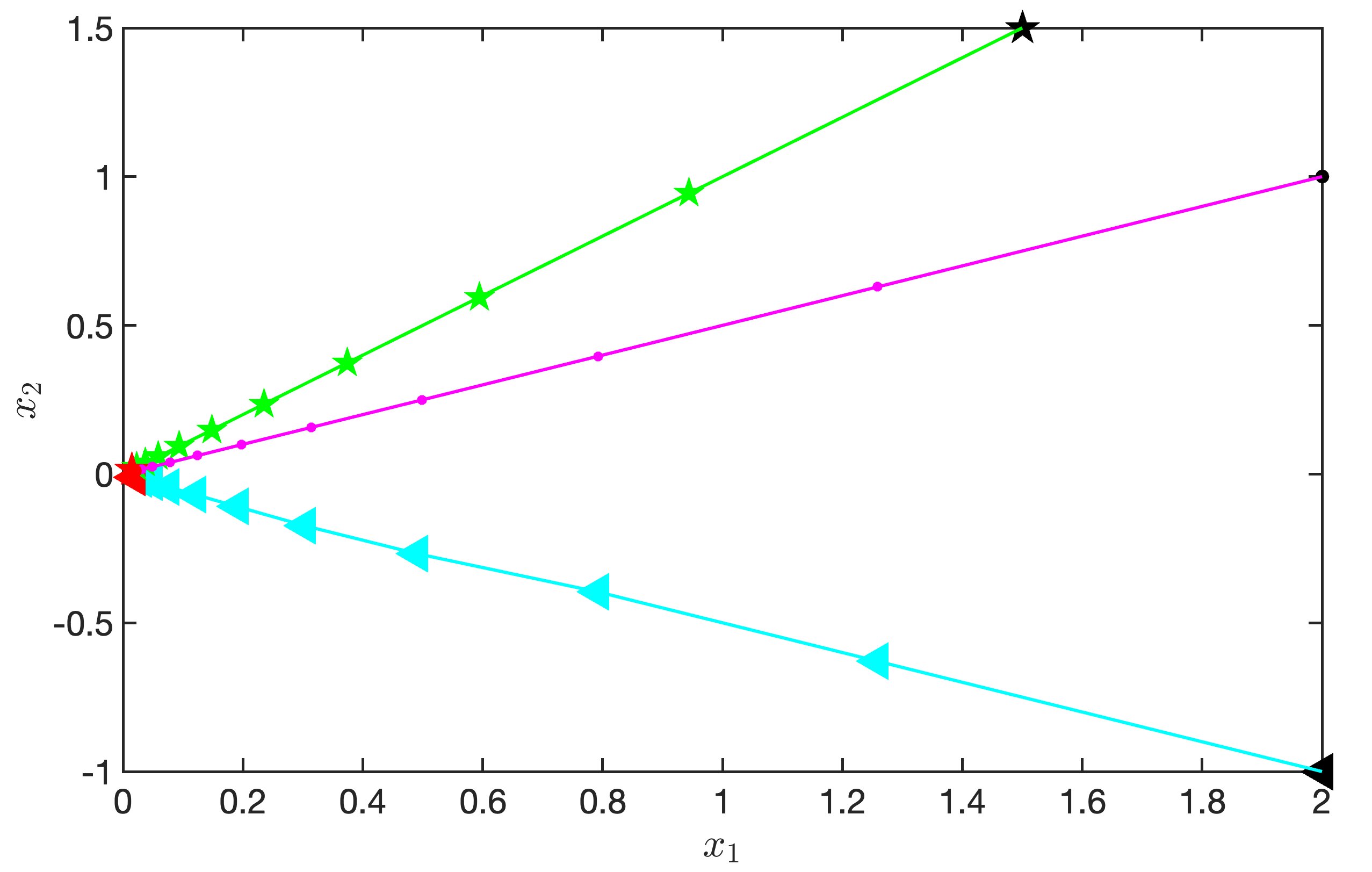}\label{figur4d}}
    \caption{Output of Algorithm \ref{algo1} of Example \ref{example4}}
    \label{figur4}
\end{figure}
In Figure \ref{figur4}, the iterates generated by Algorithm \ref{algo1} for different initial points taken from the set $[-4,3]\times[-4,3]$ are given. The sequence of iterates $\{x_k\}$ and the corresponding $\{F(x_k)\}$ generated by Algorithm \ref{algo1} for a selected initial point $x_0=(1.0000,-1.5000)^\top$ are given in Figure \ref{figur4b} and Figure \ref{figur4a}, respectively. Moreover, for three randomly selected initial points, the sequence of iterates $\{x_k\}$ and the corresponding $\{F(x_k)\}$ generated by Algorithm \ref{algo1} are shown in Figure \ref{figur4d} and Figure \ref{figur4c}, respectively.

The performance of Algorithm \ref{algo1} for Example \ref{example4} is shown in Table \ref{table4}. Moreover, we have compared the results of the QNM with the SD method for set optimization as presented in Table \ref{table4}. The values in Table \ref{table4} show that the proposed method performs better than the existing SD method.

\begin{table}[ht]
\centering
\caption{Performance of Algorithm \ref{algo1} on Example \ref{example4}}
\centering 
\scalebox{0.68}{
\begin{tabular}{c c c c} 
\hline 
Number of& Algorithm  &Iterations &CPU time\\
initial points& &(Min, Max, Mean, Median, Mode, SD)  &(Min, Max, Mean, Median, $\lceil\text{Mode}\rceil$, SD)   \\ 
\hline 
$100$ & QNM&($1,9,~8.0300,~8,~8,~0.2642$) & ($1.1032,~49.9087,~27.6729,~22.9076,~17,~4.4759$) \\ 
    & SD &($1,~10,~9.6300,~10,~10,~0.4852$) & ($33.9218,31.0397
,32.0548,28.6446,28,1.6261$) \\
\hline 
\end{tabular}}
\label{table4}
\end{table}
\end{example}

In the next two examples (Example \ref{example5} and Example \ref{example6}), we consider a cone different from $\mathbb{R}^m_+$ and observe the performance of Algorithm \ref{algo1}. The Example \ref{example5} is a slight modification of Test instance 5.1 discussed in \cite{bouza2021steepest} with respect to a cone $\mathbb{R}^m_+$.

\begin{example}\label{example5}
Consider the function $F:\mathbb{R}\rightrightarrows \mathbb{R}^2$ defined as 
$$F(x)=\{f^1(x),f^2(x),f^3(x),f^4(x)\},$$
where for each $i\in[4],f^i:\mathbb{R}\to\mathbb{R}^2$ is given by 
\[
f^i(x)=\begin{pmatrix}
2x^2+e^x+\frac{(i-3)}{2}\\
\tfrac{x}{2}\cos(x)+\frac{(-i+3)}{2}\sin^2 x
\end{pmatrix}.
\]
The cone is $K$ given by $K=\{(z_1,z_2)^\top\in\mathbb{R}^2:6z_1-2z_2\geq 0,-7z_1+10z_2\geq 0\}$.

\begin{figure}[ht]
\centering
\mbox{\subfloat[The value of $F$ at each iteration generated by Algorithm \ref{algo1} for the initial point $x_0=4.3000$ of Example \ref{example5}]{ \includegraphics[width=0.48\textwidth]{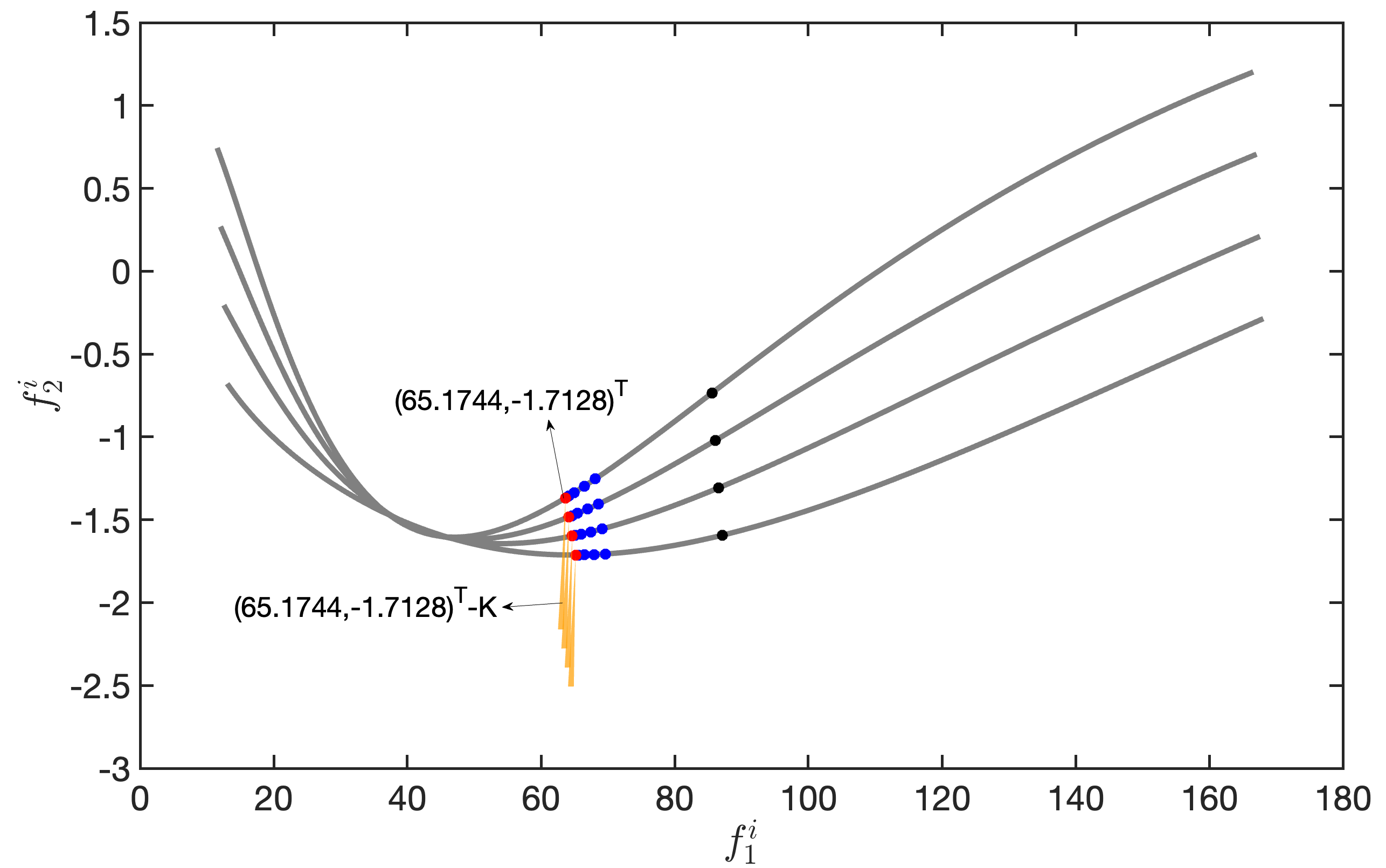}\label{figure5a}}\quad
\subfloat[The value of $F$ for three different initial points at each iteration generated by Algorithm \ref{algo1} on Example \ref{example5}]{\includegraphics[width=0.48\textwidth]{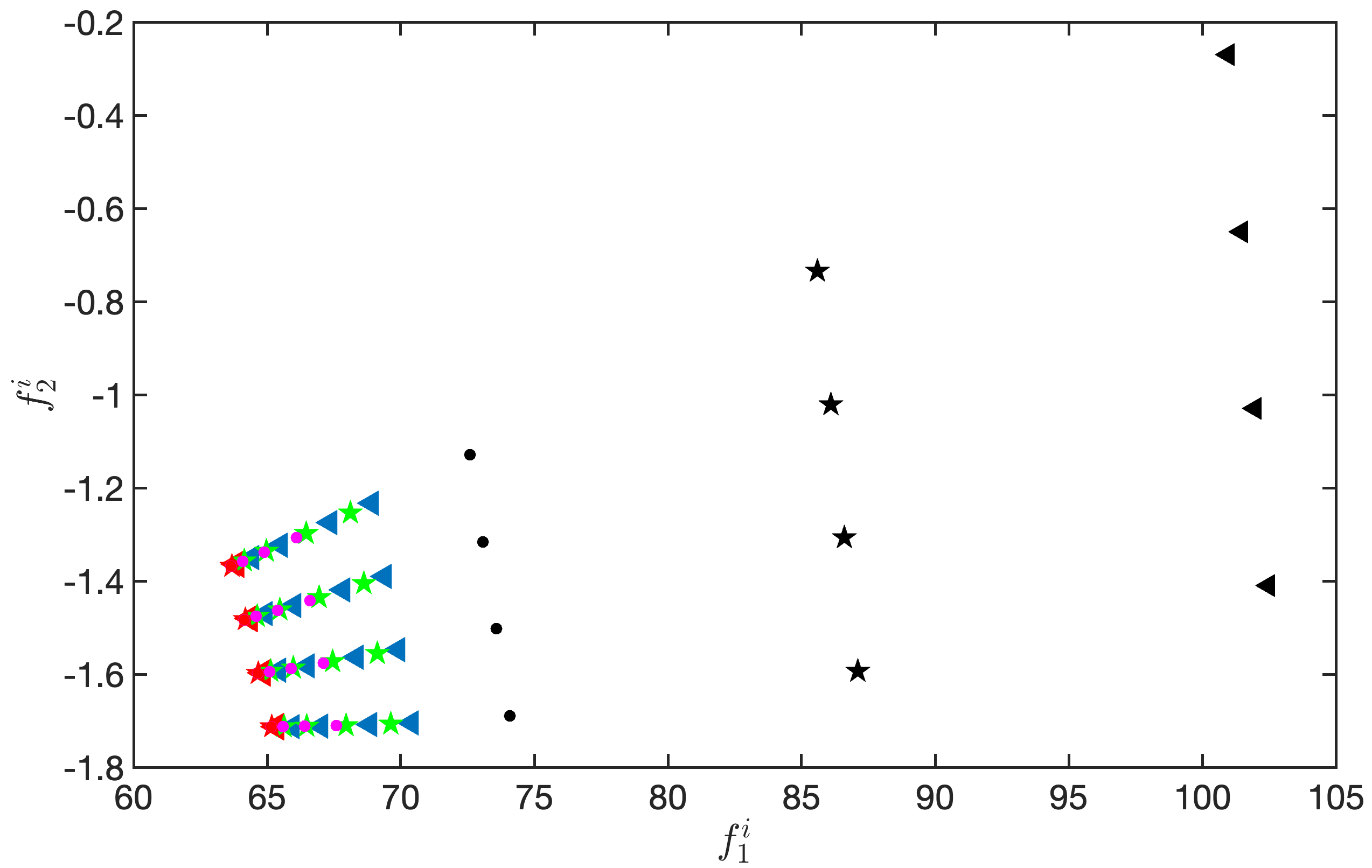} \label{figure5b}}\quad}
\caption{Obtained output of Algorithm \ref{algo1} for Example \ref{example5}}
\label{figure5} 
\end{figure}

The output of Algorithm \ref{algo1} for different initial points of Example \ref{example5} are depicted in Figure \ref{figure5}. The discretized segments represent the objective values that transverse a curve within the interval $[2.3350,4.4010]$. In Figure \ref{figure5a}, we test our algorithm for an initial point $x_0=4.0000$. It can be seen that the points depicted with red color are optimal points of $F$ as the set $(65.1744,-1.7128)^\top-K$ does not contain any element of $F(x)$ other than $(65.1744,-1.7128)$ for all $x\in[2.3350,4.4010]$. In Figure \ref{figure5b}, we test our algorithm for three initial points and depict the output.

The performance of Algorithm \ref{algo1} for Example \ref{example5} is shown in Table \ref{table5}. Moreover, we have compared the results of the QNM with the SD method for set optimization as presented in Table \ref{table5}. The values in Table \ref{table5} show that the proposed method performs better than the existing SD method.

\begin{table}[ht]
\centering
\caption{Performance of Algorithm \ref{algo1} on Example \ref{example5}}
\centering 
\scalebox{0.62}{
\begin{tabular}{c c c c} 
\hline 
Number of& Algorithm  &Iterations &CPU time\\
initial points& &(Min, Max, Mean, Median, Mode, SD)  &(Min, Max, Mean, Median, $\lceil\text{Mode}\rceil$, SD)   \\ 
\hline 
$100$ & QNM&($1,~6,~5.8900,~6,~6,~0.8275$) & ($0.1013,~1.2640e+03,~1.0633e+03,~1.0639e+03,~24,~17.6970$) \\ 
    & SD &($1,~8,~8.8900,~8,~8,~0.8275$) & ($1.2113,~8.7830e+03,~8.6875e+03,~8.6873e+03,~36,~55.8729$)\\
\hline 
\end{tabular}}
\label{table5}
\end{table}

Further, for the initial point $x_0=4.0000$, the decreasing behavior in the values of vector-valued functions at each iteration has been exhibited in Table \ref{table_5c}.

\begin{table}[ht]
\centering
\caption{ {Output of Algorithm \ref{algo1} on Example \ref{example5} for the initial point $x_0=4.0000$}}
\centering 
\scalebox{0.73}{
\begin{tabular}{c c c c c c} 
\hline 
$k$& $x_k$ &$f^{1}(x_k)$ & $f^{2}(x_k)$ & $f^{3}(x_k)$   &$f^{4}(x_k)$  \\ 
\hline 
 0&  $4.0000$& $(85.5982,-0.7345)$& $(86.0982,-1.0209)$ &$(86.5982,-1.3073)$ & $(87.0982,-1.5937)$ \\ 
 1&  $3.7232$& $(68.1191,-1.2538)$& $(68.6191,-1.4047)$ &$(69.1191,-1.5555)$ & $(69.6191,-1.7064)$\\  
 2&  $3.6932$& $(66.4532,-1.2981)$& $(66.9532,-1.4354)$ &$(67.4532,-1.5727)$  & $(67.9532,-1.7100)$\\
3&  $3.6660$& $(64.9752,-1.3360)$& $(65.4752,-1.4613)$ &$(65.9752,-1.5867)$ & $(66.4752,-1.7120)$ \\
4&  $3.6503$& $(64.1331,-1.3569)$& $(64.6331,-1.4755)$ &$(65.1331,-1.5941)$ & $(65.6331,-1.7126)$ \\
5&  $3.6416$& $(63.6744,-1.3681)$& $(64.1744,-1.4830)$ &$(64.6744, -1.5979)$ & $(65.1744,-1.7128)$ \\
6&  $3.6371$& $(63.4378,-1.3738)$& $(63.9378,-1.4868)$ &$(64.4378, -1.5998)$ & $(64.9378,-1.7129)$ \\

\hline 
\end{tabular}}
\label{table_5c}
\end{table}
\end{example}

\begin{example}\label{example6}
 Consider the set-valued function $F:\mathbb{R}^2\rightrightarrows \mathbb{R}^2$ defined as
$$F(x)=\{f^1(x),f^2(x),\ldots,f^{100}(x)\},$$
where for each  $i\in[100],$ the function $f^i:\mathbb{R}^2\to\mathbb{R}^2$ is given as
\[
f^i(x)=\begin{pmatrix}
x_1^2+\sin(x_1)+x_1^2\cos(x_2)+0.25\cos\left(\tfrac{2\pi(i-1)}{100}\right)\sin^2\left(\tfrac{2\pi(i-1)}{100}\right)+e^{x_1+x_2}+x_2^2\\
2x_1^2+x_2^2\cos(x_1)+0.25\cos^2\left(\tfrac{2\pi(i-1)}{100}\right)\sin\left(\tfrac{2\pi(i-1)}{100}\right)+\cos(x_2)+e^{x_1+x_2}+2x_2^2\\
\end{pmatrix}.
\]  
The cone $K$ is $K$ given by $K=\{(z_1,z_2)^\top\in\mathbb{R}^2:2z_1-6z_2\geq 0, -6z_1+7z_2\geq 0\}$.
\begin{figure}[ht]
\centering
\subfloat[The value of $F$ at each iteration generated by Algorithm \ref{algo1} for the initial point $x_0=(0.5000,-0.5000)^\top$ of Example \ref{example6}]{ \includegraphics[width=0.46\textwidth]{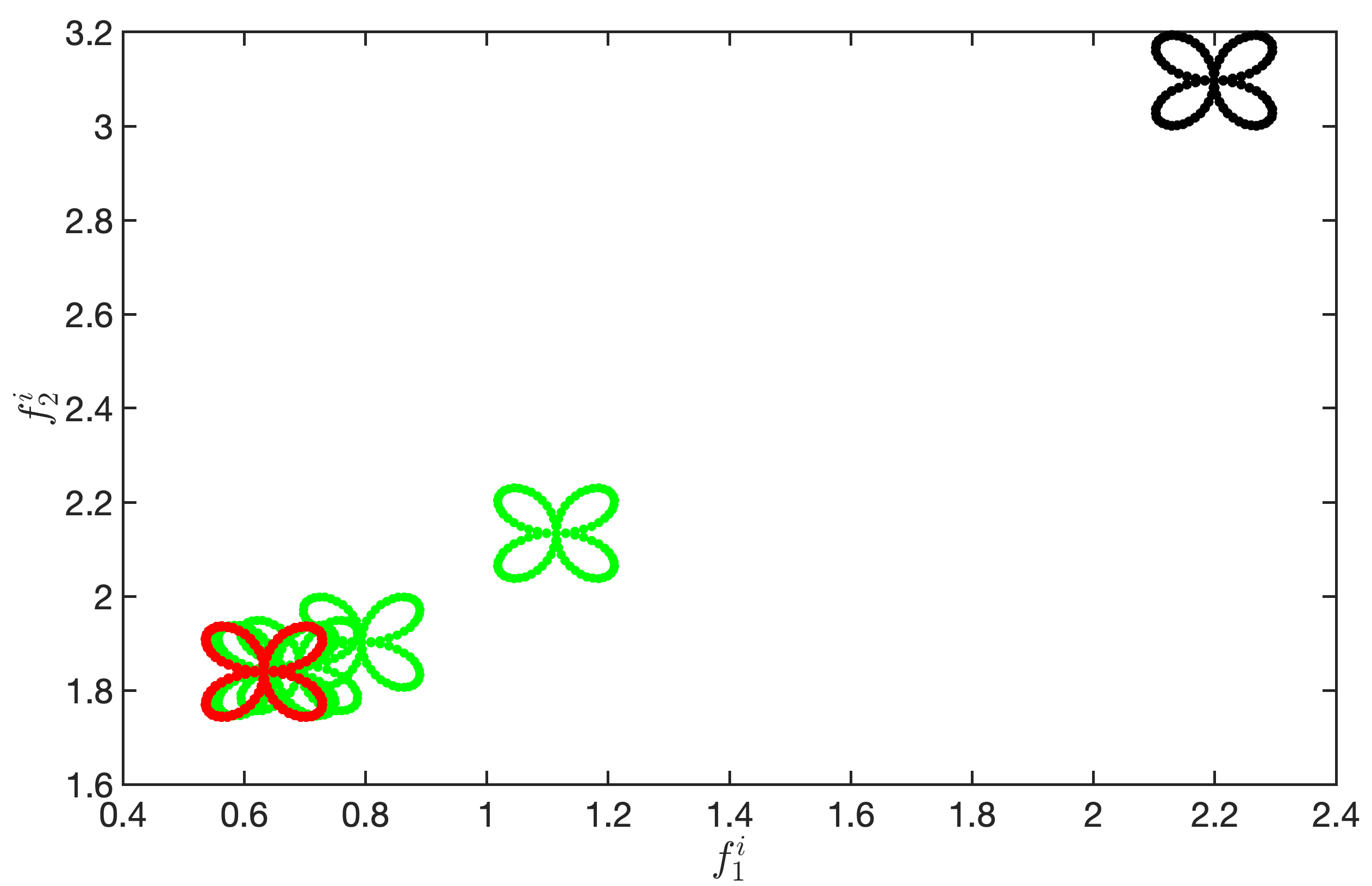}\label{figure6a}}\quad
\subfloat[The movement of subsequent $x_k$ generated by Algorithm \ref{algo1} for the initial 
point $x_0=(0.5000,-0.5000)^\top$ of Example \ref{example6}]{\includegraphics[width=0.46\textwidth]{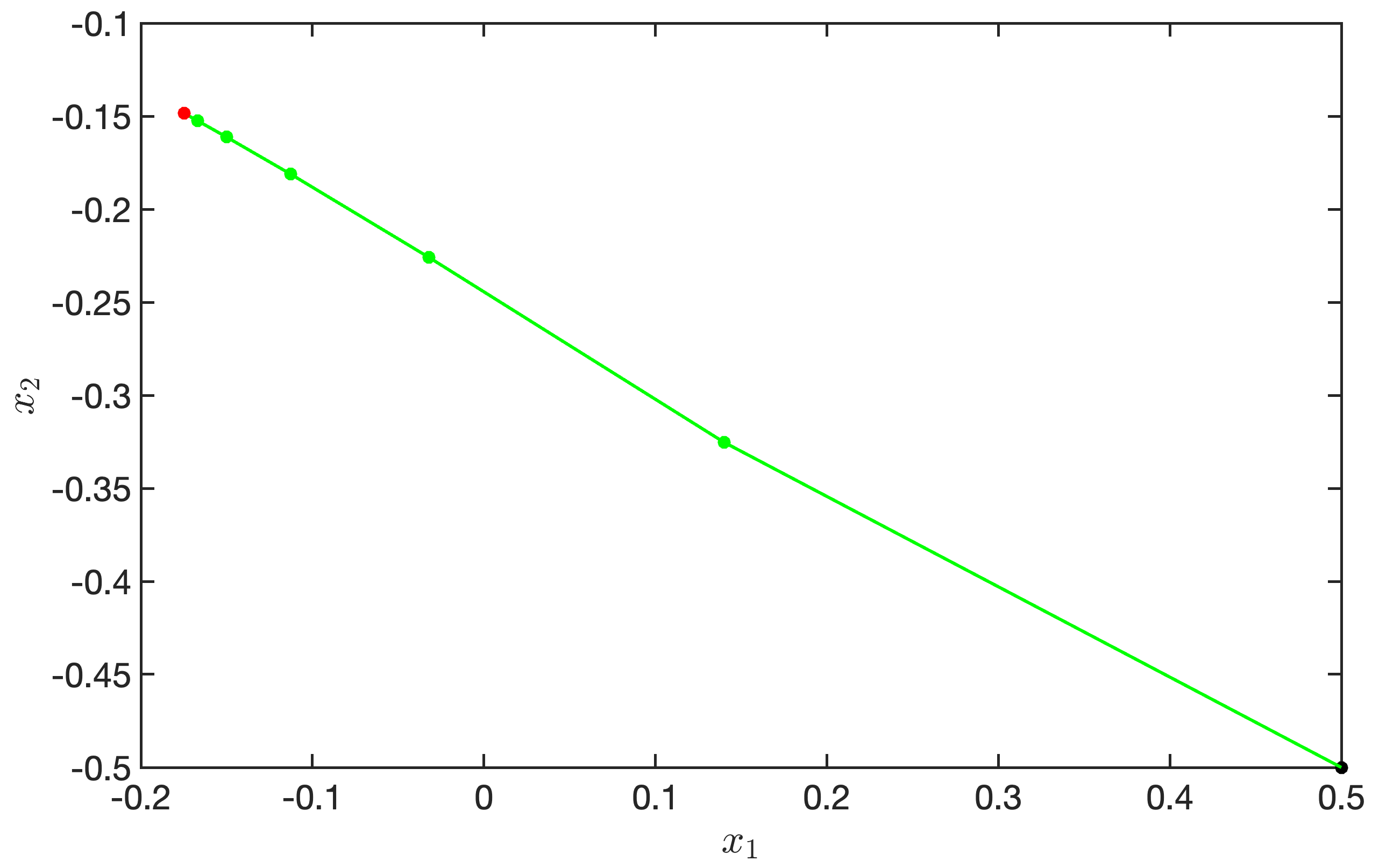} \label{figure6b}}\qquad
\subfloat[The value of $F$ for three different initial points at each iteration generated by Algorithm \ref{algo1} of Example \ref{example6}]{\includegraphics[width=0.46\textwidth]{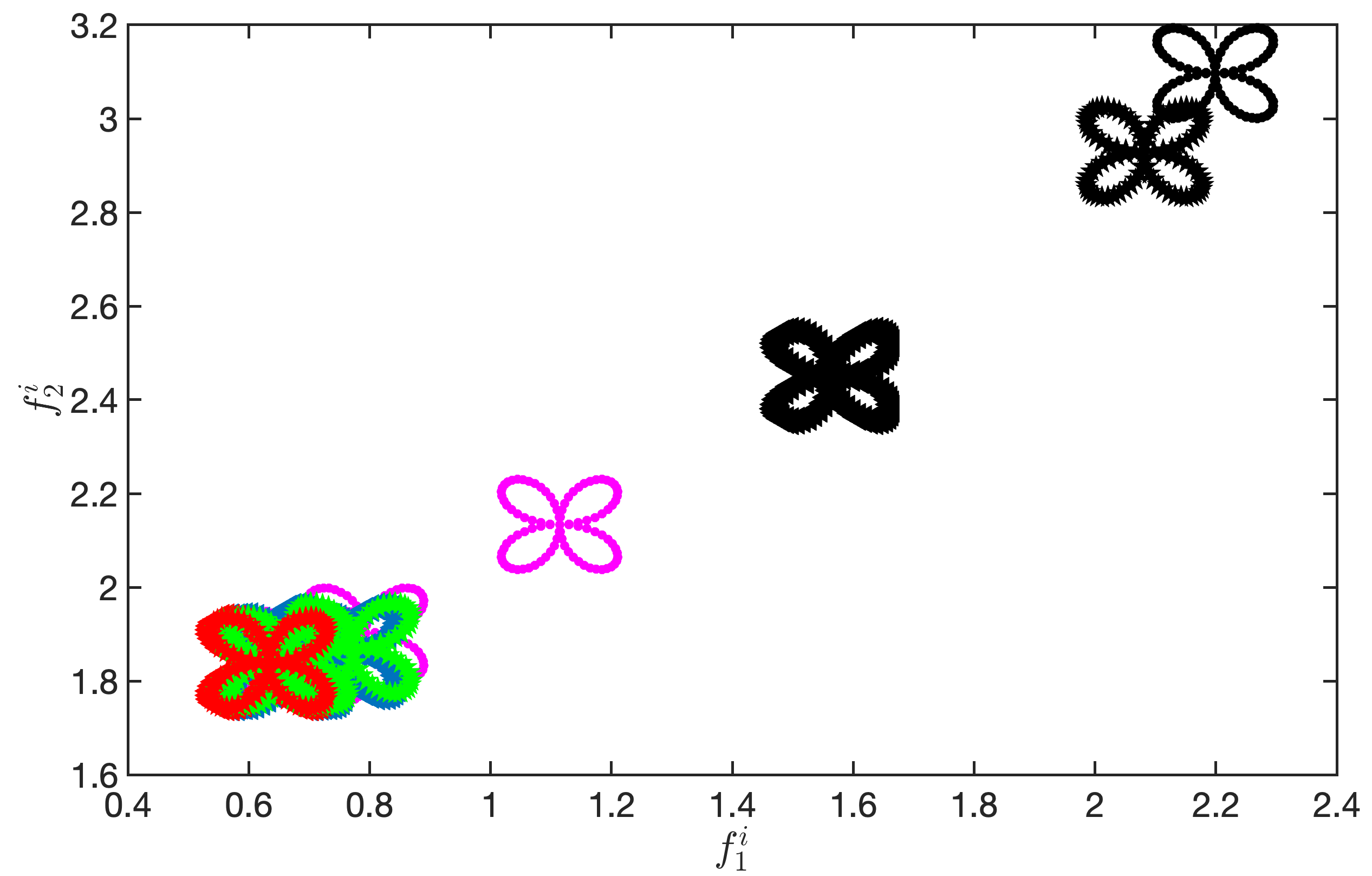} \label{figure6c}}\qquad
\subfloat[The movement of subsequent $x_k$ generated by Algorithm \ref{algo1} for the three different randomly chosen initial 
points of Example \ref{example6}]{\includegraphics[width=0.46\textwidth]{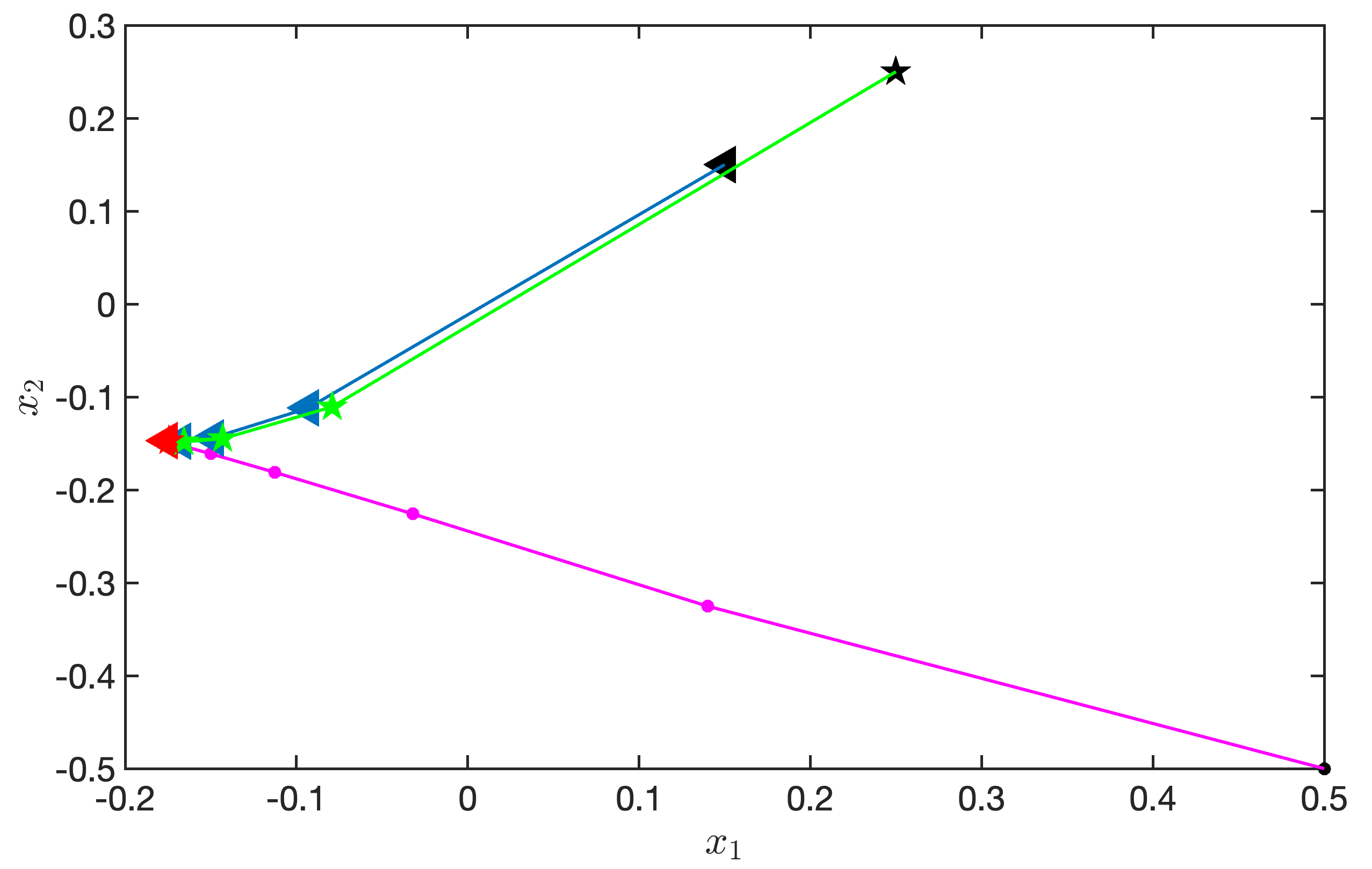} \label{figure6d}}
\caption{Obtained output of Algorithm \ref{algo1} for Example \ref{example6}}
\label{figure6} 
\end{figure}

The output of Algorithm \ref{algo1} for different initial points of Example \ref{example6} are depicted in Figure \ref{figure5}. The discretized segments represent the objective values that transverse a curve within the interval $[-\pi,\pi]\times[-\pi,\pi]$. Figure \ref{figure6a} exhibits the sequence $\{F(x_k)\}$ generated by Algorithm \ref{algo1} for a chosen starting point, and Figure \ref{figure6b} tests Algorithm \ref{algo1} for three different randomly chosen starting points. 

The performance of Algorithm \ref{algo1} for Example \ref{example6} is shown in Table \ref{table6}. Moreover, we have compared the results of the QNM with the SD method for set optimization as presented in Table \ref{table6}. The values in Table \ref{table6} show that the proposed method performs better than the existing SD method. 

\begin{table}[ht]
\centering
\caption{Performance of Algorithm \ref{algo1} on Example \ref{example6}}
\centering 
\scalebox{0.68}{
\begin{tabular}{c c c c} 
\hline 
Number of& Algorithm  &Iterations &CPU time\\
initial points& &(Min, Max, Mean, Median, Mode, SD)  &(Min, Max, Mean, Median, $\lceil\text{Mode}\rceil$, SD)   \\ 
\hline 
$100$ & QNM&($1,~33,~22.7200,~23,~29,~6.4418$) & ($3.2551,~67.4360,~45.8573,~46.2144,~3,~12.9461$) \\ 
    & SD &($1,~33,~22.4500,~22.5000,~25,~6.3776$) & ($1.2551,~63.9492,~43.9482,~44.1527,~3,~12.1495$) \\
\hline 
\end{tabular}}
\label{table6}
\end{table}

\end{example}


In the next example, we discuss the robust counterpart of a vector-valued facility location problem under uncertainty \cite{ide2014relationship}. A complete discussion on this problem is given in \cite{bouza2021steepest}.
\begin{example}\label{example7} 
Consider the function $F:\mathbb{R}^2\rightrightarrows \mathbb{R}^3$ defined as 
$$F(x)=\{f^1(x),f^2(x),\ldots,f^{100}(x)\},$$
where for each $i\in[100],f^i:\mathbb{R}^2\to\mathbb{R}^3$ is given as
\[
f^i(x)=\tfrac{1}{2}
\begin{pmatrix}
\lVert x-l_1-u_i\rVert^2\\
\lVert x-l_2-u_i\rVert^2\\
\lVert x-l_3-u_i\rVert^2
\end{pmatrix},
\]  
where $l_1=\begin{pmatrix}
0\\
0
\end{pmatrix},~l_2=\begin{pmatrix}
8\\
0
\end{pmatrix} ~\text{ and }~l_3=\begin{pmatrix}
0\\
8
\end{pmatrix}$.
We consider a uniform partition set of $10$ points of the interval $[-1,1]$ given by 
$$
\mathcal{U}=\left\{-1,-1+\tfrac{1}{s},-1+\tfrac{2}{s},\ldots,-1+\tfrac{2(s-1)}{s},1 \right\}\text{ with }s=4.5.
$$
The set $\{u_i=(u_{1i},u_{2i})^\top:i\in[100]\}$ is an enumeration of the set $\mathcal{U}\times \mathcal{U}$.\\

We generate the total of $100$ initial points in the square $[-50,50]\times[-50,50]$ as shown in Figure \ref{figure7}. The set $(l_1+u_i)\cup(l_2+u_i)\cup(l_3+u_i)$ is represented by grey points and the locations of $l_1,l_2,l_3$ are depicted in blue color. The values of 
$F(x_k)$ generated by Algorithm \ref{algo1} for three different randomly chosen initial points are given with cyan, magenta, and green colors as shown in Fig. \ref{figure7}.\\

\begin{figure}[ht]
\centering
\mbox{\subfloat[The value of $F$ in argument space at each iteration generated by Algorithm \ref{algo1} for three randomly chosen initial points of Example \ref{example7}]{ \includegraphics[width=0.6\textwidth]{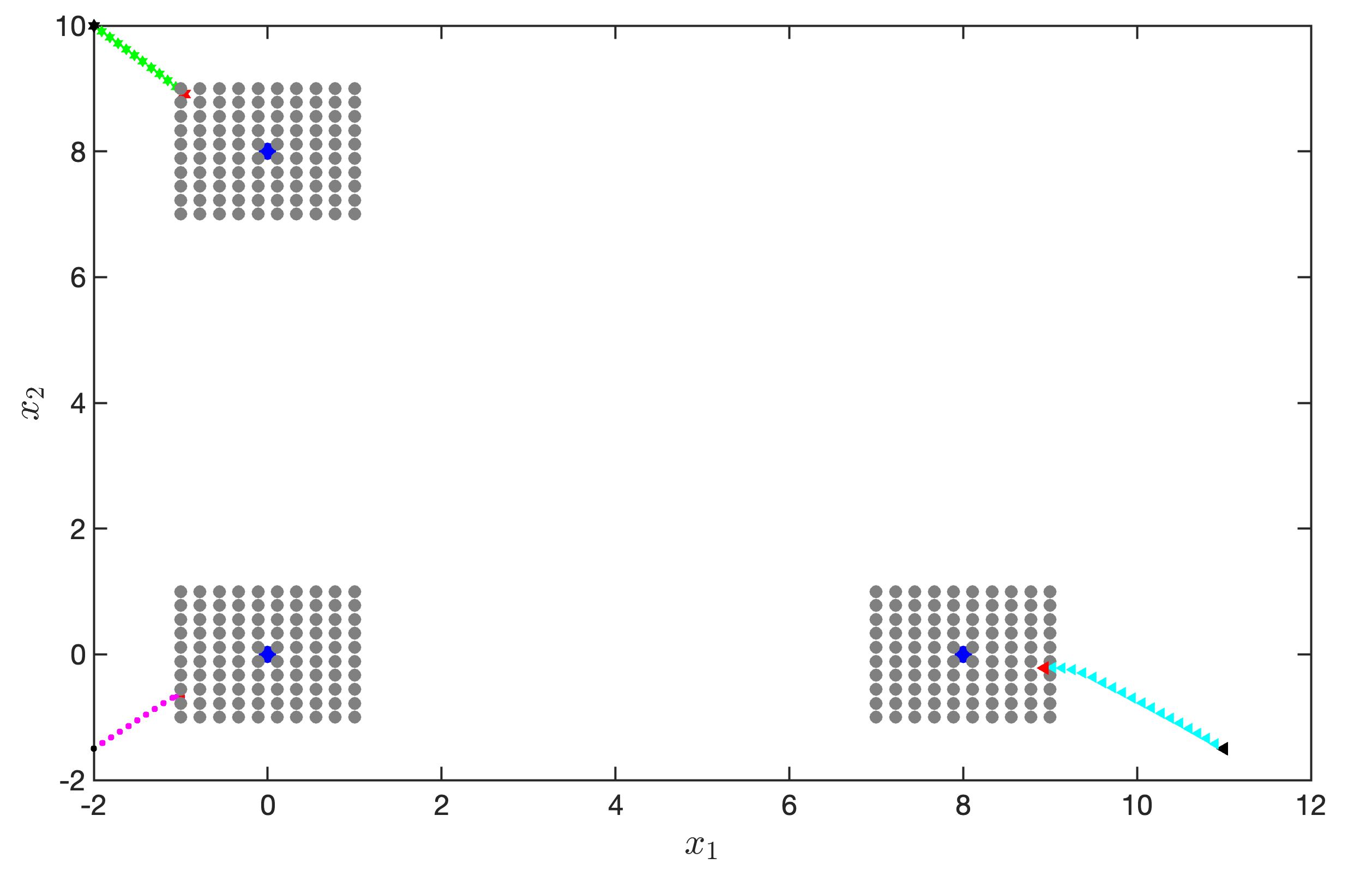}}\quad}
\caption{Obtained output of Algorithm \ref{algo1} on Example \ref{example7}}
\label{figure7} 
\end{figure}

The performance of Algorithm \ref{algo1} on Example \ref{example7} is shown in Table \ref{table7a}. A comparison of the results of QNM with the existing SD method is presented in Table \ref{table7a}.  The values in Table \ref{table7a} show that the proposed method performs better than the existing SD method.

\begin{table}[ht]
\centering
\caption{Performance of Algorithm \ref{algo1} on Example \ref{example7}}
\centering 
\scalebox{0.73}{
\begin{tabular}{c c c c} 
\hline 
Number of &Algorithm &Iterations &CPU time\\
initial points& &(Min, Max, Mean, Median, Mode, SD)  &(Min, Max, Mean, Median, $\lceil\text{Mode}\rceil$, SD)   \\ 
\hline
$100$ &QNM &($1,~24,~11.4503,~6,5123,~1.1421,~6.9534$) & ($1.2813,~19.2874,~4.002,~,~1,~4.0954$) \\
 &SD &($1,~42,~12.1002,~4.2000,3.2873,~13.2432$) & ($2.3412,~21.1771,5.1217,~3.9272,~1,4.2031$) \\
\hline 
\end{tabular}}
\label{table7a}
\end{table}

For the initial point $x_0 = (11,-1.5)^\top$, the decreasing behavior in the values of vector-valued functions at each iteration has been exhibited in Table \ref{table_7b}. 

\begin{table}[ht]
\centering
\caption{Output of Algorithm \ref{algo1} on Example \ref{example7} with initial point $(-11,-1.5)$}
\centering 
\scalebox{0.63}{
\begin{tabular}{c c c c c c} 
\hline
$k$& $x_k^\top$ &$f^{25}(x_k)$ & $f^{50}(x_k)$ & $f^{75}(x_k)$ & $f^{100}(x_k)$   \\ 
\hline 
 0&  $(-11,-1.5)$& $(0.8169,0.6536,0.8581)$& $(0.8133,0.6595,0.8627)$ &$(0.8008,0.6148,0.8481)$ & $(0.7973,0.6310,0.8536)$ \\  
 1&  $(10.9017,-1.4198)$& $(0.8154,0.6494,0.8566)$& $(0.8117,0.6552,0.8612)$ &$(0.7991,0.6093,0.8466)$ & $(0.7955,0.6261,0.8521)$ \\
2&  $(10.8018,-1.3395)$& $(0.8138,0.6451,0.8552)$& $(0.8100, 0.6508, 0.8598)$ &$(0.7974, 0.6034, 0.8450)$ & $(0.7937,  0.6209, 0.8506)$\\
3&  $(10.7001,-1.2589)$& $(0.8123, 0.6406, 0.8536)$& $(0.8084, 0.6462, 0.8583)$ &$(0.7957, 0.5973, 0.8435)$ & $(0.7918, 0.6155, 0.8490)$\\
4&  $(10.5967,-1.1782)$& $(0.8107,0.6359, 0.8521)$& $( 0.8066, 0.6415, 0.8568)$ &$(0.7940, 0.5908, 0.8418)$ & $(0.7899, 0.6099, 0.8475)$\\
5& $(10.4915,-1.0973)$& $(0.8090, 0.6311, 0.8505
)$& $(0.8049, 0.6365, 0.8552
)$ &$(0.7922,0.5839, 0.8402)$ & $(0.7880, 0.6041, 0.8459)$ \\
6&  $(10.3845,-1.0162)$& $(0.8074, 0.6260, 0.8489)$& $(0.8031, 0.6313, 0.8537)$ &$(0.7904, 0.5767,0.8385)$ & $(0.7860, 0.5979, 0.8443)$\\
7& $(10.2756,-0.9349)$ & $(0.8057, 0.6207,0.8473)$  & $(0.8012, 0.6258, 0.8521)$& $(0.7885,0.5689, 0.8368)$ &$(0.7840, 0.5915, 0.8426)$ \\
8&  ($10.1649,-0.8534$)& $(0.8039, 0.6152, 0.8456)$& $(0.7993, 0.6201,  0.8505)$ &$(0.7866,  0.5606, 0.8350)$ & $(0.7819, 0.5848, 0.8409)$ \\
9&  ($10.0565   -0.7725$)& $(0.8022, 0.6096,  0.8439)$& $(0.7975, 0.6143, 0.8488)$ &$(0.7848, 0.5520, 0.8333)$ & $(0.7798, 0.5780, 0.8393)$ \\
10&  ($9.9454,-0.6912$)& $(0.8005, 0.6038, 0.8422)$& $(0.7956, 0.6082, 0.8472)$ &$(0.7828, 0.5427, 0.8315)$ & $(0.7777,0.5707, 0.8375)$ \\
11&  ($9.8330,-0.6101$)& $(0.7987, 0.5977, 0.8405)$& $(0.7936, 0.6017, 0.8455)$ &$(0.7809, 0.5326, 0.8297)$ & $(0.7756,0.5632,  0.8358)$ \\
12&  ($9.7218,-0.5290$)& $(0.7969, 0.5914, 0.8388)$& $(0.7917, 0.5950, 0.8439
)$ &$(0.7789, 0.5219,  0.8279)$ & $(0.7734, 0.5554, 0.8341)$ \\
13&  ($9.6084,-0.4481$)& $(0.7951, 0.5849, 0.8370)$& $(0.7897, 0.5879, 0.8422
)$ &$(0.7769, 0.5101, 0.8260)$ & $(0.7712, 0.5473, 0.8323)$ \\
14&  ($9.4937,-0.3672$)& $(0.7933, 0.5780, 0.8352)$& $(0.7877, 0.5804,0.8404)$ &$(0.7749, 0.4972, 0.8242)$ & $(0.7690, 0.5388, 0.8305)$ \\
15&  ($9.3774   -0.2963$)& $(0.7914, 0.5710, 0.8335)$& $(0.7856, 0.5728,0.8388)$ &$(0.7729,0.4831,0.8223)$ & $(0.7667,0.5309,0.8288)$ \\
16&  ($9.2560,0.2475$)& $(0.7895,0.5634,0.8319)$& $(0.7835,0.5655, 0.8372)$ &$(0.7707,0.4674, 0.8207)$ & $(0.7644, 0.5248, 0.8272)$ \\
17&  ($9.1363, -0.2200$)& $(0.7875,0.5557, 0.8304)$& $(0.7815, 0.5589, 0.8359)$ &$(0.7686, 0.4500,0.8192)$ &$(0.7621, 0.5209, 0.8259)$ \\
18&  ($9.0278, -0.2110$)& $(0.7857, 0.5484, 0.8292)$& $(0.7797, 0.5535, 0.8348)$ &$(0.7666, 0.4322, 0.8180)$ &$(0.7601, 0.5195,0.8248)$ \\
19&  ($8.9339,-0.2151$)& $(0.7842,0.5417, 0.8283)$& $(0.7781, 0.5495, 0.8340)$ &$(0.7649, 0.4147, 0.8171)$ & $(0.7584, 0.5200, 0.8241)$ \\
\hline 
\end{tabular}}
\label{table_7b}
\end{table}
\end{example}


\section{Conclusion}\label{section7}

In this paper, we have studied set optimization problems with respect to the lower set less relation. The objective mapping is given by a finite number of twice continuously differentiable functions. We have proposed a quasi-Newton method (Algorithm \ref{algo1} to generate a sequence of iterates that converge to a point which satisfy a necessary condition for weakly minimal solutions of the problem. In the process of generating the sequence in Algorithm \ref{algo1}, we have approximated the Hessian matrices corresponding to given functions considered in Assumption \ref{assumption} with the help of BFGS methods \cite{broyden1969new,fletcher1970new,goldfarb1970family,shanno1970conditioning}. To generate the sequence, we have used a family of vector optimization problems \eqref{vp_equation}. Then, for a suitably chosen element $a_k$ from the partition set $P_{x_k}$ of the current iterate $x_k$, we have evaluated quasi-Newton direction $u_k$ (Step \ref{step3}) with the help of concepts in \cite{chuong2012steepest,drummond2014quadratically}. The process of generating iterates by Algorithm \ref{algo1} continued until the stopping condition (Step \ref{step4}) was met. We have discussed and ensured the well-definedness (Theorem \ref{critical}) of Algorithm \ref{algo1} with the existence of $(a^k,u_k)$ in Step \ref{step3} and the existence of a step length $t_k$ in Step \ref{step5} (Proposition \ref{armijo}). For deriving the convergence analysis Algorithm \ref{algo1}, we have derived the following results.
\begin{enumerate}[(i)]
\item We have proved a condition of nonstationarity of a point (Proposition \ref{critical}).
\item We analysed the boundedness of the generated sequence of quasi-Newton direction (Proposition \ref{bounded}).
\item We have derived the convergence of the generated sequence of iterates (Theorem \ref{convergence}) under a regularity condition (Definition \ref{regular_def}).
\item We proved the global superlinear convergence to a stationary point (Theorem \ref{superlinear}) of the generated sequence under a regularity condition and uniform continuity of BFGS approximations and with the help of Lemma \ref{bfgs_assumption}.
\end{enumerate}

Finally, we tested the performance of the proposed quasi-Newton method on some existing and freshly introduced numerical test problems in Section \ref{section6}. It is found that the proposed quasi-Newton method outperforms the steepest descent method. \\ 

{As a future direction, the proposed work can be tested for more practical problems similar to those discussed in \cite{ide2014relationship}}. In this paper, we have used the lower set less ordering to compare the given sets.  {Research can be performed on other ordering relations also (given in \cite{jahn2011new}). To study the proposed quasi-Newton method on these relations, the usual derivative concepts like epiderivatives or coderivatives need separate attention. Moreover, in this paper, we have used Armijo's step size condition to find the weakly minimal solution of \eqref{sp_equation}. This work can be extended to different step size conditions, such as strong Wolfe or Armijo-Wolfe conditions, and a comparison of the performance of the method can be observed. Further, we have used the Gerstewitz scalarizing function for treating the set optimization problems \eqref{sp_equation}. Future research can be performed on Hiriat-Urruty functional \cite{hiriart1979tangent}. There are several conventional optimization methods in the literature that can be generated for set-valued optimization problems. A comparison between the performance of these methods can be analysed (see \cite{lai2020q,mishra2020q,prudente2024global,hassan2019modified} and references therein). Future research can focus on devising a quasi-Newton method whose convergent analysis may not require the used assumption of regular point; Use of a strong Wolfe line search instead of Armijo condition may be of great help in this direction, as observed in \cite{kumar2024nonlinear}}.

\section*{Acknowledgement}
The authors are thankful to the anonymous reviewers and editors for their constructive comments to improve the quality of the paper. Debdas Ghosh acknowledges the financial support of the research grants MATRICS (MTR/2021/000696) and Core Research Grant (CRG/2022/001347) by the Science and Engineering Research Board, India. 
Anshika acknowledges a research fellowship from the Science and Engineering Research Board, India, with file number SB/S9/Z-03/2017-II (2022). 
Jen-Chih Yao is thankful for the research funding MOST 111-2115- M-039-001-MY2, Taiwan.

\section*{Disclosure statement}
There were no conflicts of interest reported by the authors of this paper.


\begin{thebibliography}{99}

\bibitem{alonso2008optimality} Alonso M, Rodr{\'\i}guez-Mar{\'\i}n L. Optimality conditions for a nonconvex set-valued optimization problem. Comput. Math. Appl. 2008;56(1):82--89.


 

\bibitem{bao2010set} Bao TQ, Mordukhovich BS. Set-valued optimization in welfare economics. Adv. Math. Econ. Springer;2010:113--153.

\bibitem{bertsekas2003convex} Bertsekas D, Nedic A, Ozdaglar A. Convex analysis and optimization. Vol 1. Athena Scientific; 2003.

\bibitem{bouza2021steepest}Bouza G, Quintana E, Tammer C. A steepest descent method for set optimization problems with set-valued mappings of finite cardinality. J Optim Theory Appl. 2021;190(3):711--743.

\bibitem{bouza2019unified} Bouza G, Quintana E, Tammer C. A unified characterization of nonlinear scalarizing functionals in optimization. Vietnam J. Math. 2019;47:683--713.

\bibitem{broyden1969new} Broyden C. A new double-rank minimisation algorithm. Preliminary report. Notices Amer. Math. Soc. 1969;16:670.

\bibitem{byrd1987global} Byrd RH, Nocedal J, Yuan YX. Global convergence of a class of quasi-{N}ewton methods on convex problems. SIAM J. Numer. Anal. 1987;24(5):1171--1190.

\bibitem{chen1998optimality} Chen GY, Jahn J. Optimality conditions for set-valued optimization problems. Math. Methods Oper. Res. 1998;48:187--200.

\bibitem{chuong2012steepest} Chuong TD, Yao JC. Steepest descent methods for critical points in vector optimization problems. Appl. Anal. 2012;91(10):1811--1829.

\bibitem{dai2002convergence} Dai YH. Convergence properties of the {BFGS} algorithm. SIAM J. Optim. 2002;13(3):693--701.

\bibitem{dennis1977quasi} Dennis Jr JE, Mor{\'e} JJ. Quasi-{N}ewton methods, motivation and theory. SIAM Rev. 1977;19(1):46--89.

\bibitem{dennis1996numerical} Dennis Jr, John E, Schnabel RB. Numerical Methods For Unconstrained Optimization and Nonlinear Equations. SIAM. 1996.

\bibitem{drummond2014quadratically} Drummond LMG, Raupp FMP, Svaiter BF. A quadratically convergent  {{N}ewton} method for vector optimization. Optimization. 2014;63(5):661--677.

\bibitem{drummond2005steepest}   Drummond LMG, Svaiter BF. A steepest descent method for vector optimization. J Comput Appl Math.
2005;175:395--414.

\bibitem{ehrgott2014minmax} Ehrgott M, Ide J, Sch{\"o}bel A. Minmax robustness for multi-objective optimization problems. Eur J Oper Res.
2014;239:17--31.

\bibitem{eichfelder2020algorithmic} Eichfelder G, Niebling J, Rockt{\"a}schel S. An algorithmic approach to multiobjective optimization with decision uncertainty. J Glob Optim. 2020;77(1):3--25.

\bibitem{feinstein2015comparison} Feinstein Z, Rudloff B. A comparison of techniques for dynamic multivariate risk measures-set optimization and applications---the state of the art: From Set Relations to Set-Valued Risk Measures. Springer;2015:3--41.

\bibitem{fletcher1970new} Fletcher R. A new approach to variable metric algorithms. Comput. J. 1970;13(3):317--322.

\bibitem{gerstewitz1983nichtkonvexe} Gerstewitz C. Nichtkonvexe dualit{\"a}t in der vektoroptimierung. Wissenschaftliche Zeitschrift der Technischen Hochschule f{\"u}r Chemie Carl Schorlemmer, Leuna-Merseburg. 1983;25(3):357--364.



\bibitem{ghosh2024newton} Ghosh D, Anshika, Ansari QH, Zhao X. Newton method for set optimization problems with set-valued mapping of finitely many vector-valued functions. ArXiv preprint. 2024; 1--39. arXiv:submit/588761. 















\bibitem{goldfarb1970family} Goldfarb D. A family of variable-metric methods derived by variational means. Math. Comp. 1970;24(109):23--26.

\bibitem{gunther2019computing} G{\"u}nther C, K{\"o}bis E, Popovici N. Computing minimal elements of finite families of sets w.r.t. preorder relations in set optimization. J Appl Numer Optim. 2019;1(2):131--144.

\bibitem{gunther2018new} G{\"u}nther C, Popovici N. New algorithms for discrete vector optimization based on the  {{G}raef-{Y}ounes} method and cone-monotone sorting functions. Optimization. 2018;67(7):975--1003.

\bibitem{hassan2019modified} Hassan BA. A modified quasi-{N}ewton methods for unconstrained Optimization. J. Pure Appl. Math. 2019;42:504--511.

\bibitem{hiriart1979tangent} Hiriart-Urruty JB. Tangent cones, generalized gradients and mathematical programming in Banach spaces. Math. Oper. Res. 1979;4(1):79--97.

\bibitem{ide2014concepts} Ide J, K{\"o}bis E. Concepts of efficiency for uncertain multi-objective optimization problems based on set order relations. Math Methods Oper Res. 2014;80(1):99--127.

\bibitem{ide2014relationship}  Ide J, K{\"o}bis E, Kuroiwa D, Sch{\"o}bel A, Tammer C. The relationship between multi-objective robustness concepts and set-valued optimization. Fixed Point Theory Appl. 2014;1:1--20.

\bibitem{jahn2011new} Jahn J, Ha TX. New order relations in set optimization. J Optim Theory Appl. 2011;148(2):209--36.

\bibitem{jahn2006multiobjective} Jahn J. Multiobjective search algorithm with subdivision technique. Comput. Optim. Appl. 2006;35:161--175.

\bibitem{jahn2004some} Jahn J. Some fundamental theorems-Vector optimization: Theory, applications, and extensions. Berlin,Springer. 2011:61--100.


 \bibitem{jahn2009vector} Jahn J. Vector optimization. Berlin,Springer. 2009.

\bibitem{jahn2015vectorization}  Jahn J. Vectorization in set optimization. J Optim Theory Appl. 2013;167:783--795.

\bibitem{ghosh2023infeasible} Jauny, Ghosh D, Ansari QH, Ehrgott M, Upadhayay A. An infeasible interior-point technique to generate the nondominated set for multiobjective optimization problems. Comput. Oper. Res. 2023;155:106-236.

\bibitem{gopfert2003variational} G{\"o}pfert A, Riahi H, Tammer C, Zalinescu C. Variational methods in partially ordered spaces. 2003;17.

\bibitem{khan2016set} Khan AA, Tammer C, Zalinescu C. Set-valued optimization.  Berlin: Springer; 2015.

\bibitem{kobis2016treatment} K{\"o}bis E, K{\"o}bis MA. Treatment of set order relations by means of a nonlinear scalarization functional: a full characterization. Optimization. 2016;65:1805--1827.

\bibitem{kobis2018numerical} K{\"o}bis E, Le TT. Numerical procedures for obtaining strong, strict and ideal minimal solutions
of set optimization problems. Appl Anal Optim. 2018;2(3):423--440.

\bibitem{kumar2023quasi} Kumar K, Ghosh D, Upadhayay A, Yao JC, Zhao X. Quasi-{N}ewton methods for multiobjective optimization problems: A systematic review. Appl. Set-Valued Anal. Optim. 2023;5(2): 291--321.

\bibitem{kumar2024nonlinear} Kumar K, Ghosh D, Yao JC, Zhao X. Nonlinear conjugate gradient methods for unconstrained set optimization problems whose objective functions have finite cardinality. Optimization. 2024:1--40.


\bibitem{mahato2023quasi} Kumar S, Mahato NK, Ansary MAT, Ghosh, D. A quasi-{N}ewton method for uncertain multiobjective optimization problems via robust optimization approach. arXiv preprint arXiv:2310.07226, 2023.

\bibitem{kuroiwa1997some} Kuroiwa D. Some criteria in set-valued optimization. Investigations on nonlinear analysis and convex analysis. Surikaisekikenkyusho Kokyuroku. 1997;985:171--176.

\bibitem{kuroiwa2001set} Kuroiwa D. On set-valued optimization. Nonlinear Anal Theory Methods Appl. 2001;47(2):1395--1400.

\bibitem{kuroiwa1997cone}  Kuroiwa D, Tanaka T, Ha TXD. On cone convexity of set-valued maps. Nonlinear Anal Theory Methods Appl. 1997;30:1487--1496.

\bibitem{lai2020q} Lai KK, Mishra SK, Ram, B. On q-quasi-{N}ewton’s method for unconstrained multiobjective optimization problems. Mathematics, MDPI, 2020;8(4):616.


\bibitem{mishra2020q} Mishra SK, Panda G, Chakraborty SK, Samei ME, Ram B. (2020). On q-{BFGS} algorithm for unconstrained optimization problems. Adv. Differ. Equ. 2020;1:638.

\bibitem{mutapcic2009cutting} Mutapcic A, Boyd S. Set methods for robust convex optimization with pessimizing oracles. Optim. Methods Softw. 2009;24(3):381--406.

\bibitem{pilecka2016set} Pilecka M. Set-valued optimization and its application to bilevel optimization. Technische Universit{\"a}t Bergakademie Freiberg. 2016.

\bibitem{prudente2024global} Prudente LF, Souza DR. Global convergence of a {BFGS}-type algorithm for nonconvex multiobjective optimization problems. Comput. Optim. Appl. 2024:1--39.

\bibitem{povalej2014quasi} Povalej {\v{Z}}. Quasi- {{N}ewton’s} method for multiobjective optimization. J. Comput. Appl. Math. 2014;255:765--777.

\bibitem{powell1971convergence} Powell MJD. On the convergence of the variable metric algorithm. IMA J. Appl. Math. Oxford University Press. 1971;7(1):21--36.

\bibitem{salim2018quasi} Salim MS, Ahmed AI. A quasi-{N}ewton augmented Lagrangian algorithm for constrained optimization problems. J. Intell. Fuzzy. Syst. IOS Press. 2018;35(2):2373--2382.

\bibitem{shanno1970conditioning} Shanno DF.
Conditioning of quasi-{N}ewton methods for function minimization. Math. Comp. 1970;24(111):647--656.

\bibitem{singh2024globally} Singh A, Ghosh D. A globally convergent improved {BFGS} method for generalized Nash equilibrium problems. SeMA Journal. 2024;81(2):235--261.

\bibitem{yuan1991modified} Yuan YX. A modified {BFGS} algorithm for unconstrained optimization. IMA J. Numer. Anal. Oxford University Press. 1991;11(3):325--332.

\bibitem{upadhayay2024nonmonotone} Upadhayay A, Ghosh D, Jauny, Yao JC, Zhao X. A nonmonotone conditional gradient method for multiobjective optimization problems. Soft Comput. 2024;1-22.

\bibitem{wright2006numerical} Wright, SJ. Numerical Optimization. Springer. 2006.





























































\end{thebibliography}

\end{document}